\newcommand{\NN}{\mathbb{N}}
\newcommand{\RR}{\mathbf{R}}
\newcommand{\CC}{\mathbb{C}}
\newcommand{\PP}{\mathbb{P}}
\newcommand{\mr}{\mathbb{R}}
\newcommand{\mz}{\mathbb{Z}}
\newcommand{\mc}{\mathbb{C}}
\newcommand{\simp}{\mathbf{simp}}
\DeclareMathOperator{\vect}{\mathbf{vect}}
\DeclareMathOperator{\diam}{diam}
\DeclareMathOperator{\rank}{rank}
\theoremstyle{definition}
\newtheorem{theorem}{Theorem}[section]
\newtheorem{corollary}[theorem]{Corollary}
\newtheorem{proposition}[theorem]{Proposition}
\newtheorem{definition}[theorem]{Definition}
\newtheorem{lemma}[theorem]{Lemma}
\newtheorem{example}[theorem]{Example}
\newtheorem*{example*}{Example}
\theoremstyle{remark}
\newtheorem{remark}[theorem]{Remark}
\theoremstyle{definition}
\newtheorem*{definition*}{Definition}
\theoremstyle{plain}
\newtheorem*{theorem*}{Theorem}
\theoremstyle{plain}
\newtheorem*{corollary*}{Corollary}
\theoremstyle{plain}
\newtheorem*{proposition*}{Proposition}
\theoremstyle{plain}
\newtheorem*{conj*}{Conjecture}
\crefname{appendix}{Appendix}{Appendices}
\DeclareMathOperator{\wfs}{wfs}
\DeclareMathOperator{\lfs}{lfs}
\DeclareMathOperator{\lnfs}{lnfs}
\newcommand{\medial}{\mathcal{M}}
\DeclareMathOperator{\crit}{crit}
\DeclareMathOperator{\reach}{reach}
\newcommand{\numvars}{n}
\title{Computing geometric feature sizes \\ for algebraic manifolds}
\author[1]{Sandra Di Rocco}
\author[2]{Parker B. Edwards}
\author[3]{David Eklund}
\author[4]{Oliver~G\"{a}fvert}
\author[2]{Jonathan D. Hauenstein}
\affil[1]{Department of Mathematics, KTH, 10044, Stockholm, Sweden}
\affil[2]{Dept. of Applied and Computational Mathematics and Statistics, University of Notre Dame, Notre Dame, IN, USA}
\affil[3]{RISE, Research Institutes of Sweden, Isafjordsgatan 22, 16440, Kista, Sweden}
\affil[4]{Mathematical Institute, University of Oxford, United Kingdom}
\date{}
\begin{document}
\maketitle
\begin{abstract}\noindent
    We introduce numerical algebraic geometry methods for
    computing lower bounds on the reach, local feature size, and the weak feature size of the real part of an equidimensional and smooth algebraic variety using the variety's defining polynomials as input. 
    For the weak feature size, we also show that non-quadratic complete intersections generically have finitely many geometric bottlenecks, and describe how to compute the weak feature size directly rather than a lower bound in this case. 
    In all other cases, we describe additional computations that can be used to determine feature size values rather than lower bounds. We also present homology inference experiments that combine persistent homology computations with implemented versions of our feature size algorithms, both with globally dense samples and samples that are adaptively dense with respect to the local feature~size.
\end{abstract}
\section{Introduction}

Exploring the geometry of a given data set has proven to be a powerful tool in data analysis.
For example, topological data analysis (TDA) aims to recover 
topological information of a data set such as connectedness or holes in its shape~\cite{carltop, ChazalSurvey,ghristtop} and has been successfully applied to problems in a wide range of fields~\cite{GiuntiDatabase,hensel2021survey,wasserman2018survey}.
If the data set lies on a manifold that is algebraic, namely it lies on a 
geometric shape defined by algebraic equations, a more direct approach using computational algebraic geometry can be applied. 
In this case, the data set can be viewed as a sampling of the algebraic manifold, as shown in \Cref{fig:sample},
where it is important to find guarantees that the topology of the sample, i.e., 
the topology of the Vietoris-Rips complex defined by the data set (see Definition~\ref{def:complexes}) correctly estimates the topology of the underlying algebraic manifold. 
\begin{figure}[h!]
\centering
\includegraphics[scale=0.20]{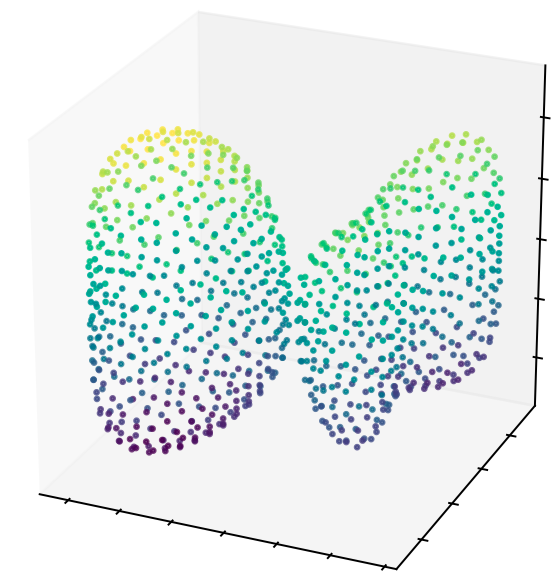}
\caption{Dense sample from a quartic surface.}\label{fig:sample}
\end{figure}

Topological and geometric data analysis algorithms frequently supply some form of the following guarantee: given a ``dense enough'' point sample from a space $X\subseteq\mr^\numvars$ as input, the algorithm correctly computes some geometric or topological property of $X$. The required density can be expressed in terms of certain invariants of the space $X$. The two most studied invariants are the \emph{reach}, introduced by Federer~\cite{FedererReach}, 
and the \emph{weak feature size}, introduced by Grove and Shiohama in the context of  Riemannian geometry  \cite{Grove93Critical,GroveShio77} and significantly expanded upon by Chazal and Lieutier for use in computational geometry~\cite{chazalwfs}. 
These invariants are of considerable importance  for persistent homology and reconstruction methods~\cite{AttaliReconstruction,BurgisserSemialg,chazalwfs,cl:reconstruction,cseh:stability,DeyReconstruction,KimHomReconstruction,Niyogi2008}.

In most settings, geometric feature sizes can only be estimated since a full specification of the space $X$ is not available. As a result, few examples of fully specified spaces with explicitly computed weak feature size have previously appeared. Algorithms computing these invariants and thus geometrical theories for efficient computations are an important area of study in applied geometry. This paper aims at providing some answers in this direction using  numerical algebraic geometric methods, e.g. see \cite{bertinibook,somnag}.

Throughout this paper, nonempty and compact algebraic manifolds \mbox{$X = V(F)\cap \mr^\numvars$}
are considered,  
where $F=\{f_1,f_2,\dots,f_m\}$ is a system consisting of polynomials
in $\mr[x_1,\dots,x_\numvars]$ and  $V(F) = \{x\in\mc^\numvars~|~F(x)=0\}.$
\Cref{sec:background} presents necessary background on feature sizes, persistent homology, and homology inference. 
The distance-to-X function \mbox{$d_X:\mr^\numvars\to\mr$}, 
defined as $d_X(z) = \inf_{x\in X}\|x-z\|,$
 is not differentiable everywhere in $\mr^\numvars$ for most spaces $X$.  Grove~\cite{Grove93Critical} constructs an analog of Morse theory defining \emph{critical points of $d_X$} or \emph{geometric bottlenecks} of $X$ as those points $z\in \mr^\numvars \setminus X$ which are in the convex hull of their closest points on $X$. The {\it weak feature size} is the infimum of all the \emph{critical values} of~$d_X$. The critical values of~$d_X$ are those values $d_X(z)$ where $z$ is a geometric bottleneck~(see~\Cref{def:wfs}). 

\begin{example*}
Consider the ellipsoid $X\subseteq\mr^3$ defined by $x_1^2 + x_2^2 + x_3^2/2=1$ as depicted in \Cref{fig:ellipsoid}. It has a single geometric bottleneck at the origin (red point). 
\begin{figure}[h]
\begin{center}
\includegraphics[scale=0.1]{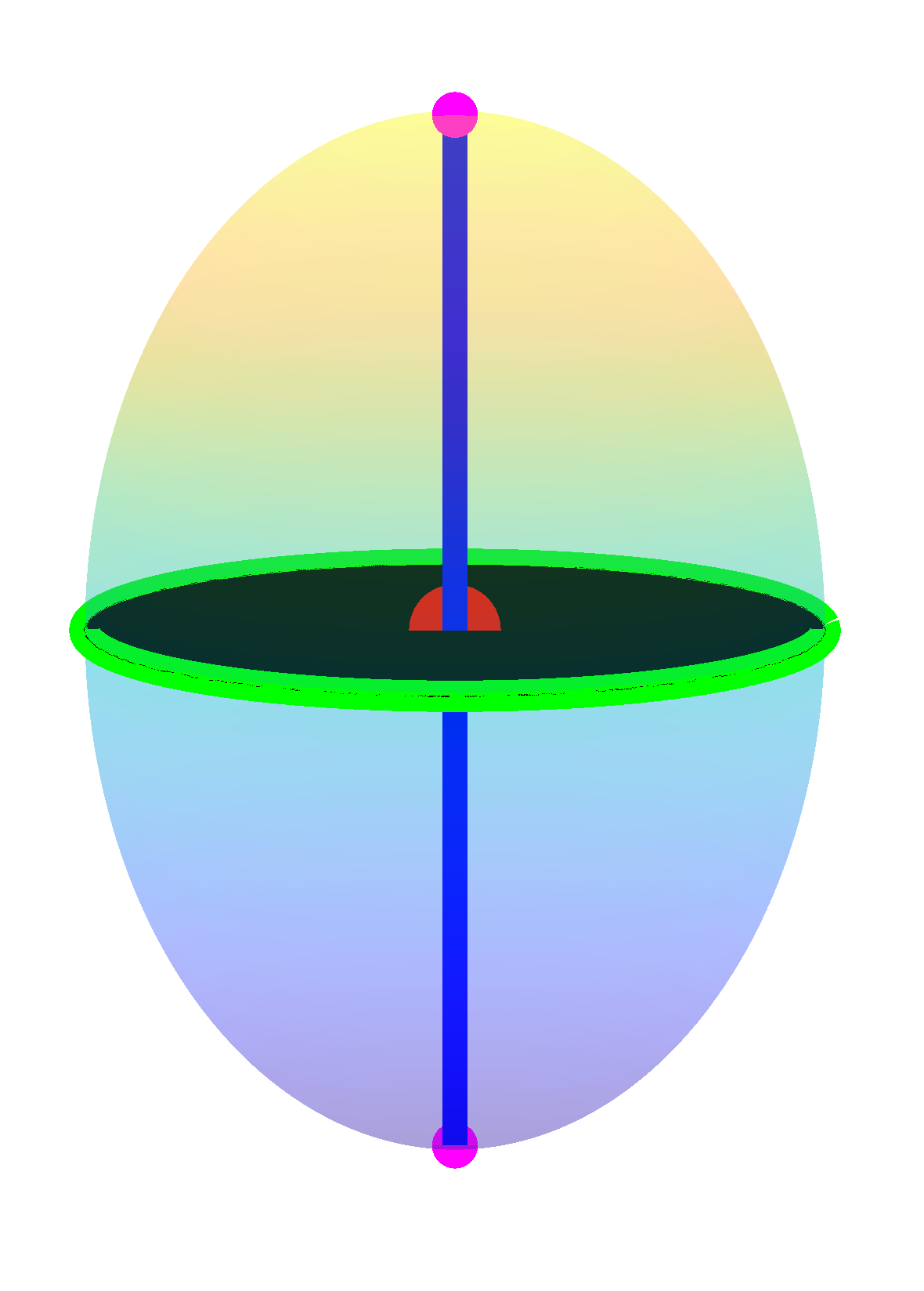}
\end{center}
\vspace{-1cm}
\caption{Ellipsoid \label{fig:ellipsoid}}
\end{figure} 

We will see in \Cref{sec:wfs} that the number of convex hulls of closest points which contain a geometric bottleneck crucially impacts computations. The ellipsoid in \Cref{fig:ellipsoid} is an example that poses difficulties, as it has a one-dimensional locus of convex hulls along the $(x_1,x_2)$-plane which contain the origin. They are depicted with black segments connecting green antipodal points on the unit circle in the $(x_1,x_2)$-plane.

Algebraic conditions also detect that the origin is contained in the convex hull of its furthest points on $X$, which lies along the $x_3$-axis with blue segments connecting the magenta points at $\left(0,0,\pm\sqrt{2}\right)$ in \Cref{fig:ellipsoid}.  
\end{example*}

Using a combination of geometric arguments, the Tarski-Seidenberg Theorem, and Sard's Theorem, Fu proved that the set of critical values of $d_X$ is finite when $X$ is semialgebraic~\cite{FuTubular1985}. This implies that the weak feature size is always positive. In the ellipsoid example, the weak feature size is $1$. This theorem strongly motivates studying the weak feature size as it applies even when 
$X$ is not smooth nor equidimensional. The proof, however, does not suggest a feasible algorithm for computing the critical values of $d_X$.

In \Cref{sec:lfs_compute}, 
we describe a method to compute the reach of $X$ as well as 
the \emph{local feature size}~\cite{amenta1999surface} of $X$ at a point $w\in\mr^\numvars$ given the defining polynomials~$F$ as~input. Numerical computations can compute these quantities to arbitrary precision via our approach.  Moreover, if the input depends on rational numbers, exactness 
recovery methods such as \cite{Exactness} can refine the numerical results to extract exact information.  For example, we use exactness recovery methods in \Cref{ex:cassini_def} to determine exact expressions for the reach of a particular space.

\begin{theorem*}[\ref{thm:reach_and_lfs}, \ref{cor:reach_and_lfs_hom}]
For both the reach and the local feature size, 
one can utilize the finite set of points computed via 
a single parameter homotopy~\cite{CoeffParam} on a polynomial system constructed
using first-order critical conditions to obtain a nontrivial lower 
bound.  Using additional reality testing, one can determine the value of
the reach and the local feature size.  
\end{theorem*}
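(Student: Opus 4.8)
The plan is to reduce both computations to solving zero-dimensional polynomial systems built from \emph{first-order critical conditions}, to read off a nontrivial lower bound from the \emph{entire} complex solution set, and then to pin down the exact value by reality and feasibility testing. Recall the classical picture for $X$ smooth and compact: $\reach(X) = \min(\beta, \gamma)$, where $\beta = \tfrac12 \inf \|x - y\|$ ranges over \emph{geometric bottleneck pairs} $(x,y)$ --- distinct points of $X$ with $x - y$ normal to $X$ at both $x$ and $y$ --- and $\gamma$ is the least focal radius of $X$, i.e.\ the smallest $t > 0$ with $x + t\nu$ a focal point of $X$ for some $x \in X$ and unit normal $\nu$; likewise $\lfs(w) = d(w, \medial)$, and a point $z \in \medial$ closest to $w$ is either a focal point or is attached to a pair $(x_1, x_2)$ with $z - x_i \perp T_{x_i} X$ and $\|z - x_1\| = \|z - x_2\|$.

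First I would encode each of these as a polynomial system. For the bottleneck contribution: $f_i(x) = 0$ and $f_i(y) = 0$ for all $i$, rank conditions forcing $x - y$ into the row spaces of the Jacobians $J_F(x)$ and $J_F(y)$ (vanishing of the appropriate minors of the augmented Jacobians, or equivalently normal-frame multiplier equations), and --- since the bottleneck locus of a smooth variety is already finite --- minimization of $\|x - y\|^2$ over its points. For the focal/curvature contribution: the smooth-point and unit-normal conditions on $(x, \nu)$, the point $z = x + t\nu$, a determinantal/eigenvalue condition expressing that the differential of the normal exponential map degenerates at $(x, t\nu)$ (equivalently that $1/t$ is a principal curvature in the direction $\nu$), and the first-order conditions for minimizing $t^2$ on this incidence variety; this companion system can also be viewed as the ``collision limit'' $y \to x$ of the bottleneck system, which is why a single polynomial system suffices. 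For $\lfs(w)$ one reuses both families of equations for the medial-axis point $z$ and appends the first-order conditions for minimizing $\|w - z\|^2$, with $w$ entering as a parameter. Each system is a member of a coefficient-parametrized family in the sense of \cite{CoeffParam}, so after one ab-initio solve of a generic member one tracks solutions to the member cut out by $F$ (and, for the local feature size, by $w$): a single parameter homotopy.

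For the lower bound: $X$ is compact, so the infima defining $\reach(X)$ and $\lfs(w)$ are attained, and at an attaining configuration the first-order conditions hold --- hence that configuration is one of the finitely many solutions of the relevant system. Taking the modulus of each solution's objective and the appropriate root and scaling therefore yields a finite set of nonnegative reals whose minimum is at most the true value, and which is strictly positive because smoothness keeps the solutions off the degenerate locus ($x = y$, or a vanishing focal radius): this is the nontrivial lower bound. To recover the exact value I would apply reality testing to the finite solution set --- discarding non-real solutions and those failing the sign constraints (a focal radius that is not a positive real, a spurious ``complex-normal'' configuration, points not on the real variety), checking that the survivors are genuine geometric configurations, and taking the minimum over them, which equals $\reach(X)$, resp.\ $\lfs(w)$, by the attainment argument. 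The accompanying corollary then feeds the resulting bound into a standard homology-inference guarantee.

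The hard part will be proving that these systems are genuinely zero-dimensional for every smooth equidimensional $X$, especially the focal/curvature system: the focal set itself is positive-dimensional, so one must show that the configurations that are first-order critical for the focal radius form a finite set --- plausibly via a Bertini-type generic-smoothness argument on the incidence variety, or by bounding their number through an Euclidean-distance-degree-type count together with the known finiteness of the bottleneck locus of a smooth variety. A secondary subtlety is establishing the decomposition $\reach(X) = \min(\beta, \gamma)$ in exactly the form needed so that its value is attained by a configuration meeting the above first-order conditions, and confirming that the reality and feasibility test is decidable --- which it is, since only finitely many explicit candidates must be examined --- and so genuinely certifies the exact value rather than merely a second bound.
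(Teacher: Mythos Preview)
Your approach differs substantially from the paper's, and the difference exposes a genuine gap. You decompose the reach as $\min(\beta,\gamma)$ with separate bottleneck and focal systems and plan to solve each as a zero-dimensional system. The paper instead works with a single \emph{medial axis correspondence} $M(F)$ --- triples $(x_1,x_2,z)$ with $x_i\in V(F)$, $x_i-z$ normal to $V(F)$ at $x_i$, and $d(x_1,z)^2=d(x_2,z)^2$ --- and then imposes first-order (Lagrange) conditions for the objective $d(x_1,z)^2$ (reach) or $d(w,z)^2$ (local feature size) on $M(F)$, obtaining critical correspondences $C(F)$ and $L(F,w)$. There is no bottleneck/focal split.

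The gap is your reliance on zero-dimensionality. Your assertion that ``the bottleneck locus of a smooth variety is already finite'' is false: the unit circle, two concentric circles, and the ellipsoid $x_1^2+x_2^2+x_3^2/2=1$ all have positive-dimensional bottleneck loci, and the paper treats each of these explicitly. So the difficulty you flag only for the focal system in fact afflicts the bottleneck system as well, and zero-dimensionality cannot be established ``for every smooth equidimensional $X$.'' The paper avoids this issue entirely. Its key lemma is that, by the algebraic Sard theorem, the objective $D$ is \emph{constant on each connected component} of $C(F)$ (resp.\ $L(F,w)$) whose projection is not contained in the diagonal $\Delta$, regardless of that component's dimension. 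A parameter homotopy then returns finitely many endpoints meeting every connected component; evaluating $D$ on those endpoints yields the finite set of critical values, whose smallest positive element is the nontrivial lower bound. Reality testing on the components --- not on isolated solutions --- then upgrades the bound to the exact value. Your plan would need this Sard/constancy-on-components argument to be repaired; once you have it, both the bottleneck/focal decomposition and the zero-dimensionality hypothesis become unnecessary.
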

To the best of our knowledge, these provide the first algorithms that can 
compute these quantities for algebraic manifolds of arbitrary codimension.

\Cref{sec:wfs} is dedicated to constructing a theory and algorithms for computing the weak feature size. We apply a wholly algebraic framework to this problem when $X$ is the real part of a smooth and equidimensional algebraic variety. The resulting theory yields an alternative proof of Fu's Theorem in this setting as well as a method for computing bounds on the weak feature size. 

\begin{theorem*}[\ref{thm:fin_many_crit_val}, \ref{cor:wfs_pd_cor}]
A lower bound on the weak feature size can be obtained using
the union of the finite set of points computed via
$\numvars$ parameter homotopies \cite{CoeffParam}.
Using additional reality testing, one can determine the value of the weak feature size.
\end{theorem*}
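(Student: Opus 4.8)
The plan is to encode geometric bottlenecks as solutions of a finite family of square polynomial systems, to prove finiteness of the critical values of $d_X$ by an infinitesimal argument on those systems, and then to read the weak feature size off their solution sets.

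Suppose $z\in\mr^\numvars\setminus X$ is a geometric bottleneck with $d_X(z)=r$, and set $S=\{x\in X:\|x-z\|=r\}$. Compactness of $X$ gives $S\neq\emptyset$, and Carathéodory's theorem in $\mr^\numvars$ shows $z$ lies in the convex hull of some subset $\{x_1,\dots,x_k\}\subseteq S$ with $2\le k\le\numvars+1$ (not $k=1$, since $z\notin X$). Each $x_i$ globally minimizes $\|\,\cdot\,-z\|^2$ on $X$, hence is a smooth point of $V(F)$ at which $z-x_i$ is a normal vector. This suggests, for each $k\in\{2,\dots,\numvars+1\}$ — exactly $\numvars$ values — a polynomial system $G_k$ whose unknowns are the points $x_1,\dots,x_k\in\mc^\numvars$, a point $z\in\mc^\numvars$, multipliers expressing $z-x_i$ as a normal vector to $V(F)$ at $x_i$, and coefficients $\lambda_1,\dots,\lambda_k$, and whose equations impose $x_i\in V(F)$, $\|z-x_i\|^2=\|z-x_1\|^2$ for $i\ge 2$, $z=\sum_i\lambda_i x_i$, and $\sum_i\lambda_i=1$. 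A direct dimension count shows $G_k$ is a square system of expected dimension zero, and by the discussion above every geometric bottleneck yields a real point of some $V(G_k)$ with $\lambda_i\ge 0$, $z\notin X$, and squared-distance value $r^2$.

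The core of Theorem~\ref{thm:fin_many_crit_val} is then the claim that the regular function $\rho:=\|z-x_1\|^2$ is constant on each irreducible component of $V(G_k)$. I would establish this by differentiating along a smooth arc $t\mapsto(x_i(t),z(t),\lambda_i(t),\dots)$ in $V(G_k)$: since $x_i\in V(F)$, $\dot x_i\in T_{x_i}V(F)$, and since $z-x_i$ is normal there, $\tfrac{d}{dt}\|z-x_i\|^2=2(z-x_i)\cdot\dot z$ for every $i$; as $\|z-x_i\|^2=\rho$ identically on $V(G_k)$, this common value is $\dot\rho$, so $\dot\rho=\sum_i\lambda_i\dot\rho=\sum_i\lambda_i\,2(z-x_i)\cdot\dot z=2\big(\textstyle\sum_i\lambda_i(z-x_i)\big)\cdot\dot z=2(z-z)\cdot\dot z=0$. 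The same computation over $\mc$ shows $d\rho$ vanishes on the smooth locus of $V(G_k)$, hence $\rho$ is constant on components. Therefore $\rho$ takes only finitely many values on $\bigcup_{k=2}^{\numvars+1}V(G_k)$; since every squared critical value of $d_X$ is among them, $d_X$ has finitely many critical values — an alternative proof of Fu's theorem~\cite{FuTubular1985} in the smooth equidimensional setting — and the weak feature size, when finite, is attained as a minimum.

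For Corollary~\ref{cor:wfs_pd_cor} I would run one coefficient-parameter homotopy~\cite{CoeffParam} for each of $G_2,\dots,G_{\numvars+1}$, i.e.\ $\numvars$ homotopies: solve a generic instance of each once and track to the instance determined by $F$, and let $\mathcal S$ be the union of the resulting finite solution sets. The minimum of $\sqrt\rho$ over the real points of $\mathcal S$ (optionally after discarding those with some $\lambda_i<0$ or with $z\in X$) is a lower bound for the weak feature size, because the genuine critical values all occur among these numbers. To recover the exact value, the additional ``reality testing'' step decides, for each surviving real configuration, whether $r=d_X(z)$ — equivalently whether each $x_i$ is a true nearest point of $X$ to $z$ rather than merely a critical point of the distance — which is itself a finite Euclidean-distance-degree-type computation; the weak feature size is then the minimum of $r$ over the configurations that pass. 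I expect the principal obstacle to be the genuinely positive-dimensional case — already visible for the ellipsoid of the introduction, where the bottleneck locus is a curve — in which the homotopies do not return the solution set as isolated points: there one must rely on the constancy of $\rho$ along each component (so that a perturbed or witness-set computation still recovers every value of $\rho$), which is precisely why the homotopies alone are promised only to give a lower bound, with equality restored by the further testing.
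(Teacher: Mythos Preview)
Your proposal is correct and lands on the same overall architecture as the paper---encode bottlenecks by the systems $G_k$ for $k=2,\dots,n+1$, prove the squared-distance function is constant on each irreducible component, conclude finiteness of critical values, and then read off a lower bound from the $n$ parameter homotopies with reality testing to recover the actual value---but your proof of the constancy step is genuinely different. The paper embeds $B_k(F)$ into an auxiliary variety $A_k(F)$ and shows every point is a critical point of $D_k'(x_1,\dots,x_k,z)=\sum_i d(x_i,z)^2$ by an explicit row-reduction argument bounding the rank of the $(km+k)\times(kn+n)$ Jacobian of the defining equations augmented by $\nabla D_k'$; algebraic Sard then forces constancy. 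Your argument bypasses the matrix entirely: along any arc, $(z-x_i)\perp\dot x_i$ by normality gives $\tfrac{d}{dt}\|z-x_i\|^2=2(z-x_i)\cdot\dot z$, and the convex-combination identity $\sum_i\lambda_i(z-x_i)=0$ kills the sum, hence $\dot\rho=0$. This is shorter and more transparent; what the paper's version buys is that the rank computation makes explicit the codimension count $kc+k-1$ on $A_k(F)$, which ties in with the later dimension analysis in the proof of \Cref{thm:finitely_many_bottlenecks}. One small remark: your description of the reality-testing step (checking that each $x_i$ is a genuine nearest point rather than merely critical) is in fact needed to pass from real algebraic bottlenecks to geometric ones, and is a bit more than the paper spells out at \Cref{cor:wfs_pd_cor}, where the emphasis is only on detecting real points on possibly positive-dimensional components.
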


\begin{example*}
The ellipsoid example above has a geometric bottleneck with infinitely many closest points.
Although the previous theorem applies to that case, it is often more desirable 
from a numerical conditioning standpoint to consider nonsingular isolated solutions
to well-constrained systems.  Consider the perturbation defined by 
\mbox{$x^2 + y^2 + z^2/2 \begin{color}{blue} + xz/7\end{color} = 1$}, illustrated in Figure~\ref{fig:ellipsoid2}. In this case, only three convex hulls containing the geometric bottleneck at the origin contribute to algebraic computations. Black segments connect the origin to green points, which are distance minimizers. 
\begin{figure}[h]
\begin{center}
\includegraphics[scale=0.1]{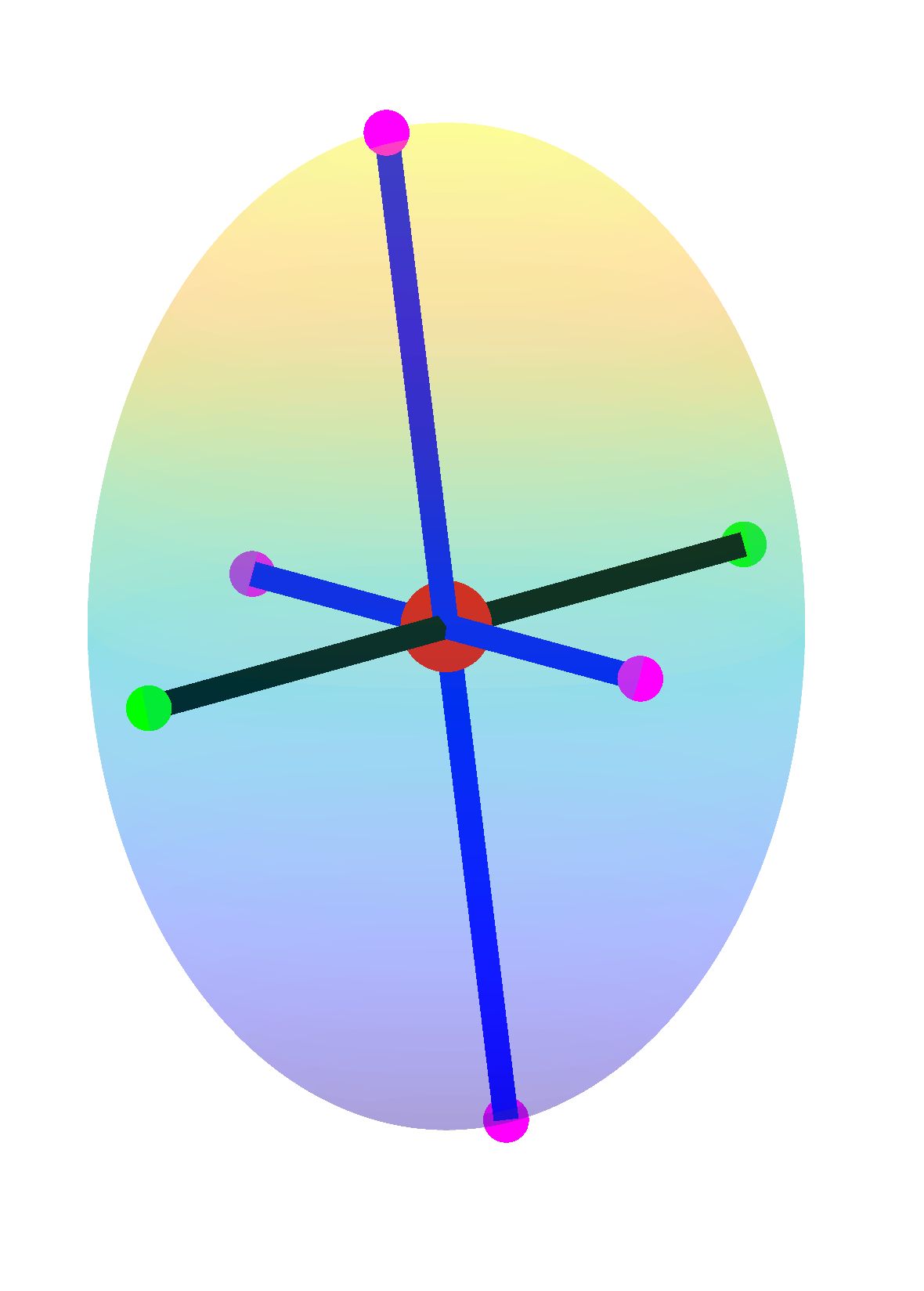}
\end{center}
\vspace{-1cm}
\caption{Perturbation of ellipsoid}\label{fig:ellipsoid2}
\end{figure}
\end{example*}

This example's behavior is the typical result of a perturbation in a rigorous sense. By applying the celebrated Alexander-Hirschowitz Theorem \cite{Alexander1995Interp}
on the expected dimension of the secant variety of the Veronese embedding, one obtains a description of the generic behavior of geometric bottlenecks
as summarized in the following.
\begin{theorem*}[\ref{thm:finitely_many_bottlenecks}]
Non-quadratic generic complete intersections  have finitely many critical points, i.e., finitely many geometric bottlenecks.
\end{theorem*}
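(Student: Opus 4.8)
The plan is to run a parameter count in the style of Euclidean distance degree and bottleneck arguments, with the Alexander-Hirschowitz theorem supplying the one nontrivial input. First I would algebraize the problem. Reorder the defining polynomials so that $\deg f_1\ge 3$ --- possible precisely because the complete intersection is non-quadratic (degree-$1$ equations, if present, can be eliminated first by passing to the corresponding linear subspace) --- and let $\mathcal F$ be the affine space of systems of the prescribed multidegree $(d_1,\dots,d_m)$, so $\dim\mathcal F=\sum_{j}\binom{\numvars+d_j}{d_j}$ and a general $F\in\mathcal F$ defines a smooth equidimensional variety by Bertini. If $z$ is a geometric bottleneck of $X=V(F)\cap\mr^{\numvars}$, then $z$ lies in the convex hull of its set of closest points on $X$; applying Carath\'eodory's theorem inside the affine span of those points, I may take distinct, \emph{affinely independent} $x_1,\dots,x_k\in X$ with $2\le k\le\numvars+1$ and $z=\sum_i\lambda_i x_i$, $\lambda_i>0$, $\sum_i\lambda_i=1$. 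Smoothness turns ``$x_i$ is a closest point'' into the algebraic system: $f_j(x_i)=0$ for all $j$; $z-x_i=\sum_j\mu_{ij}\nabla f_j(x_i)$ for some multipliers $\mu_{ij}$; and $\|z-x_1\|^2=\dots=\|z-x_k\|^2$. Since each real bottleneck gives a distinct complex solution of this system, it suffices to show that, for every $k$, the incidence variety $\mathcal I_k\subseteq\mathcal F\times\mc^{\numvars}\times(\mc^{\numvars})^k\times(\mc^m)^k\times\mc^k$ cut out by these equations has $\dim\mathcal I_k\le\dim\mathcal F$. Given that, upper semicontinuity of fibre dimension puts the set of $F$ with a positive-dimensional fibre inside a proper subvariety; intersecting the complements over the finitely many values of $k$ leaves a dense open set of $F$ for which $X$ has finitely many bottlenecks, and hence finitely many critical values of $d_X$.

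Then I would estimate $\dim\mathcal I_k$ by projecting to the coordinates $(z,x_1,\dots,x_k,\mu_\bullet)$. The equidistance and affine-span conditions are linear in $z$ and, for affinely independent $x_\bullet$, identify $z$ with the circumcenter of $x_\bullet$ inside $\operatorname{aff}(x_\bullet)$; the image is therefore a variety of dimension $k(\numvars+m)$, with $x_\bullet$ and $\mu_\bullet$ free. Over a generic point of this image the fibre of $\mathcal I_k$ inside $\mathcal F$ is cut out by the $km$ linear equations $f_j(x_i)=0$ together with the $k\numvars$ linear equations $\sum_j\mu_{ij}\nabla f_j(x_i)=z-x_i$, in total $k(\numvars+m)$ linear conditions on the coefficients of $F$. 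If these conditions are independent, the fibre has dimension $\dim\mathcal F-k(\numvars+m)$ and $\dim\mathcal I_k\le k(\numvars+m)+(\dim\mathcal F-k(\numvars+m))=\dim\mathcal F$, as desired.

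So the crux is the independence of those linear conditions, and this is where Alexander-Hirschowitz enters, through Terracini's lemma. Since generically $\mu_{i1}\ne0$, one solves the normality equations for $\nabla f_1(x_i)$, expressing it as a fixed vector depending only on $f_2,\dots,f_m$ and on the fixed geometric data; the conditions then split into (i) $f_j$ vanishes at $x_1,\dots,x_k$ for $j\ge2$, independent because $k\le\numvars+1$ general points impose independent conditions on forms of degree $d_j$, and (ii) $f_1$ vanishes with a prescribed gradient at each $x_i$, that is, $k$ first-order conditions. The rank of the conditions in (ii) equals $k(\numvars+1)$ exactly when the double-point scheme $2x_1+\dots+2x_k$ imposes independent conditions on degree-$d_1$ forms, which by Terracini's lemma amounts to the $k$-th secant variety $\sigma_k(\nu_{d_1}(\bp^{\numvars}))$ of the $d_1$-uple Veronese having its expected dimension. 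The Alexander-Hirschowitz theorem classifies the defective cases, and none of them arises when $d_1\ge3$ and $k\le\numvars+1$: the defective families are the quadratic Veroneses for $2\le k\le\numvars$ and the sporadic triples $(d_1,\numvars,k)\in\{(3,4,7),(4,2,5),(4,3,9),(4,4,14)\}$, all of which require either $d_1=2$ or $k>\numvars+1$; one also checks the numerical bound $k(\numvars+1)\le\binom{\numvars+d_1}{d_1}$ for $d_1\ge3$, $k\le\numvars+1$. This is exactly the step at which non-quadraticity is used, and the failure of independence in the purely quadratic case is precisely the degeneracy displayed by the ellipsoid of \Cref{fig:ellipsoid}.

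The hard part, I expect, will not be the Veronese input itself but making the dimension estimate uniform. A priori $\mathcal I_k$ could pick up extra dimension from components lying over degenerate configurations of the $x_i$ where the incidence conditions become dependent. Restricting to affinely independent $x_\bullet$ already removes the worst offenders, but one still has to control configurations of $k\le\numvars+1$ affinely independent points lying on low-degree subvarieties; the point is that equidistance together with coplanarity constrains such a configuration enough (for instance, $k\ge3$ equidistant coplanar points are never collinear, and more generally cannot lie in the span of too few points) that either the rank computations above persist on the relevant strata, or those strata have small enough dimension that the base--fibre trade-off still yields $\dim\mathcal I_k\le\dim\mathcal F$. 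Pushing this stratum-by-stratum analysis through is the technical heart of the argument. A byproduct is that, whenever the incidence map $\mathcal I_k\to\mathcal F$ is dominant for some $k$, one even obtains a well-defined finite ``bottleneck count'' for the generic non-quadratic complete intersection.
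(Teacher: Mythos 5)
Your proposal follows essentially the same strategy as the paper's proof: build the incidence correspondence over the parameter space of complete intersections, count dimensions by pushing forward along the geometric configuration variables and pulling back along the parameter variables, and use Alexander--Hirschowitz (via Terracini) to certify that the linear conditions imposed on the defining polynomials by a tangency-at-$k$-points configuration have the expected rank when the degree is at least $3$. The reduction away from linear equations, the observation that $z$ is the circumcenter of the affinely independent $x_1,\dots,x_k$ inside their affine span, and the final conclusion via upper semicontinuity of fibre dimension all match the paper (its Definition \ref{def:correspondence} and Lemmas \ref{lem:codim}--\ref{lem:dimlem}).

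There are two differences worth flagging, both in your favour to note because the paper's choices make precisely your ``hard part'' disappear. First, you encode the normality condition with explicit Lagrange multipliers $\mu_{ij}$ as extra incidence coordinates and then solve for $\nabla f_1(x_i)$ assuming $\mu_{i1}\neq 0$; this introduces a stratum $\{\mu_{i1}=0\}$ over which the conditions on $f_1$ degenerate, and which you acknowledge you have not controlled. The paper instead states the condition as $\text{rank}[x_i-z\ \ JF(x_i)^T]\leq c$, eliminating the multipliers entirely; the fibre of $\pi:S_k\to(\CC^\numvars)^k\times\CC^k$ over any point off $\Gamma_k$ is then a proper intersection of $k$ determinantal varieties of codimension $\numvars-c$ with $kc$ linear conditions, giving codimension $k\numvars$ \emph{uniformly}, with no case analysis on vanishing multipliers. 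Second, your concern about configurations of affinely independent points on low-degree subvarieties is, in fact, vacuous: the paper's Proposition \ref{prop:aff-ind} uses the transitive $\mathrm{PGL}_{\numvars}$-action on spanning $k$-tuples ($k\leq\numvars+1$) to upgrade the Alexander--Hirschowitz statement from \emph{generic} $k$-tuples to \emph{all} affinely independent $k$-tuples. Consequently the independence you need holds on the entirety of $\pi(S_k)\setminus\Gamma_k$, not merely over an open dense subset, and no stratum-by-stratum estimate is required. Your outline identifies the correct ingredients and endpoint, but it stops at exactly the point where these two devices close the argument.
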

As a consequence, we construct algorithms using homotopy continuation to compute the weak feature size with arbitrary precision. Examples are presented in \Cref{sec:examples}. A Julia package which implements these algorithms for general use via \texttt{HomotopyContinuation.jl}~\cite{Breiding2018HC} is available at \url{https://github.com/P-Edwards/HomologyInferenceWithWeakFeatureSize.jl}. 
%with a static archive of the version used for some examples in \Cref{sec:examples} available at ({\color{blue} link to package archive}). 
We also use \texttt{Bertini}~\cite{bertini} implementations. Data, scripts, and input files for all examples are available at \url{https://github.com/P-Edwards/wfs-and-reach-examples}.

Our feature size algorithms comprise the final missing component of a homology inference pipeline that combines feature size computations, sampling methods for algebraic varieties~\cite{di2020sampling,dufresne2019sampling}, and persistent homology algorithms~\cite{ripser,chazprox,cseh:stability,Cufar2020ripserer}. In \Cref{sec:sparse}, we present homology inference results from an implemented version of this pipeline. Our feature size computations also provide a previously unavailable baseline to investigate the performance of methods which estimate feature sizes. As a proof of concept, we consider an ``adaptive'' subsampling method for persistent homology proposed by Dey et al.~\cite{DeyReconstruction} that estimates local feature sizes. Their approach has recently been expanded to a general framework for adaptive subsampling by Cavanna and Sheehy~\cite{cavanna2019,cavanna2020adaptive}. Using feature sizes computed via our new algorithms, we find evidence that the method of Dey et al. performs comparably to a baseline analog of Chazal and Lieutier \cite{cl:reconstruction} which requires directly computed local feature sizes.

\subsection{Related work} 

Recent work on computing feature sizes in the algebraic setting mostly focused on computing lower bounds for the reach, motivated by a result of Amari et al.~\cite[Thm.~3.4]{AmariEstimatingReach} which shows the reach of a compact manifold is determined by two distinct types of geometric behavior: regions of high curvature and ``bottleneck structures,'' which we call ``geometric 2-bottlenecks'' (\Cref{def:geom_bottleneck}). Breiding and Timme~\cite{reach-curve2022} observed that a straightforward computation can find the maximal curvature of an implicitly defined plane curve 
and Horobe\c{t}~\cite{horobet2021critical} studied the problem in greater generality by investigating an algebraic variety's critical curvature degree. Horobe\c{t} and Weinstein~\cite{Horobet2019offset} studied related theoretical problems in the context of ``offset filtrations'' and, in particular, showed that the reach is algebraic over $\mathbb{Q}$ for real algebraic manifolds defined by polynomials with rational coefficients. 
The third author~\cite{eklund2022numerical} studied computing 2-bottlenecks with numerical algebraic geometry while Weinstein together with the first and third authors~\cite{DiroccoBottleneckDegree}
developed formulas for the number of algebraic 2-bottlenecks of a smooth algebraic variety in terms of polar and Chern classes.
A subset of the present manuscript's authors~\cite{di2020sampling} 
show the special case of \Cref{thm:finitely_many_bottlenecks} for 2-bottlenecks
using a different approach.

Lowering the theoretical complexity of computing the Betti numbers and related invariants of semialgebraic sets from a list of defining polynomials comprises a rich and ongoing topic of study in real algebraic geometry, e.g., the references \cite{Basu2006Betti,Basu2021FirstHom,BurgisserSemialg} more extensively characterize recent progress in this area. 
The resulting algorithms are challenging to implement efficiently and,
to the best of our knowledge, no general implementations are available. 
We take a distinct approach to homology inference that is complimentary
by focusing on producing efficient implementations rather than lowering complexity bounds.

\section{Background and Preliminaries}
\label{sec:background}

The following summarizes the elements from the theory of distance functions and geometric feature sizes, particularly for semi-algebraic sets, necessary to state our results. We also recall how to combine feature size information with the ``persistent homology pipeline'' to compute the Betti numbers (with field coefficients) of a subspace of $\mr^\numvars$.

\subsection{Distance functions and geometric feature sizes} 
In this paper, the distance between two points $x,z\in\mr^\numvars$ uses the Euclidean distance: 
$$d(x,z)=\Vert x - z \Vert = \sqrt{\sum_{i=1}^\numvars (x_i-z_i)^2}.$$
For any nonempty subset $S\subseteq\mr^\numvars$, let $d_S:\mr^\numvars\to\mr$ denote the distance-to-$S$ function, namely $d_S(z) = \inf_{s\in S} \Vert s - z \Vert$. 
For any non-negative $\varepsilon$, let $S^\epsilon = d_S^{-1}[0,\epsilon]$ be the union of all closed $\epsilon$-balls centered at points of $S$. 
For a nonempty and compact subset $X\subseteq\mathbb{R}^\numvars$  and for any $z\in\mr^\numvars$, let $\pi_X(z)=\{x\in X~|~d_X(z)=d(x,z)\}$ be the set of the points in $X$ with minimal distance to $z$.
\begin{definition}\label{def:MedialAxis}
The \emph{medial axis} of $X$ is
\[ \medial_X=\overline{\{z\in\mr^\numvars~|~\#\pi_X(z)>1\}}\text{.} \]  Equivalently, $\medial_X$ is the (Euclidean) closure of the set of points in $\mr^\numvars$ that have at least $2$ closest points in $X$.
\end{definition}

Naturally, one can consider subsets of the medial axis based on the number of closest points.
That is, for $k\geq 2$, $\medial_{X,k} = \overline{\{z\in\mr^\numvars~|~\#\pi_X(z)\geq k\}}$
is the \emph{$k$-medial axis} where $\medial_X = \medial_{X,2}$, i.e., the medial axis is the $2$-medial axis.

\begin{definition}
The function $d_{\medial_X}:\mr^\numvars\to\mr\cup\{\infty\}$ is also called the \emph{local feature size} function of $X$ \cite{amenta1999surface}, denoted $\lfs$. 
For $w\in\mr^\numvars$, $\lfs(w)$ is the \emph{local feature size at $w$}. 
The function $\lfs$ takes value $\infty$ if $\medial_X=\emptyset$, e.g., if $X$ is convex. The \emph{reach}\label{reach} of $X$ \cite{FedererReach} is defined as \[\tau_X=\min_{x\in X} \lfs(x)\text{.}\] 
\end{definition}

\begin{definition}\label{def:wfs}
A point $z\in\mr^\numvars\setminus X$ is a \emph{critical point of $d_X$} \cite{Grove93Critical,GroveShio77} or a \emph{geometric bottleneck}\footnote{This term is new and agrees with that used in recent work on this subject in the algebraic context~\mbox{\cite{AmariEstimatingReach,reach-curve2022,di2020sampling,DiroccoBottleneckDegree,eklund2022numerical}}. It additionally distinguishes these points from other types of critical points which arise in this setting.} of $X$ if $z$ is in the convex hull of $\pi_X(z)$. 
The \emph{weak feature size} \cite{chazalwfs} of $X$ is defined as:
\[\wfs(X)=\inf_{z\in\crit(d_X)}~d_X(z) \]
where $\crit(d_X)$ denotes the set of critical points of $d_X$.
\end{definition}

Notice that $\crit(d_X)$ is a subset of $\medial_X$, so that $\tau_X\leq\wfs(X)$. Also notice that the above condition can be phrased in terms of well-centered simplices. The convex hull of a set of at most $\numvars+1$ affinely independent points in $\mr^\numvars$ is a \emph{well-centered simplex} if its circumcenter lies in its interior \cite{VanderZeeWellCentered2013}. A point $z\in\mr^\numvars \setminus X$ is a geometric bottleneck of $X$ if it is the circumcenter of a well-centered simplex with vertices in $\pi_X(z)$. 

\begin{example}\label{ex:cassini_def}
To illustrate the previous definitions, 
consider the plane curve $C\subseteq\mr^2$ defined by $d(x,p_1)^2\cdot d(x,p_2)^2 = 2$
where $p_1 = (1,0)$ and $p_2 = (-1,0)$.
The curve $C$ is called a \emph{Cassini oval} with 2 foci
and shown in Figure~\ref{fig:intro_cassinis}(a)
along with its medial axis (cyan curve)
and bottlenecks (red points).
These types of curves, with a concentration on examples bearing more resemblance to an ellipse than the one we consider, 
were proposed by Cassini in the late~$17^{\rm th}$~century as candidates for planetary orbits~\cite[p. 36]{Cassini1693}.\footnote{This 1693 publication of Cassini's is the earliest to which we could trace this example, but, e.g., Yates~\cite{Yates1947Curves} dates Cassini's study of these ovals to the earlier date of 1680 without citation.}
The medial axis $\medial_C$ consists of three segments along the coordinate axes, namely
$$\begin{array}{l}
(a,0) \hbox{~~~for~~~} a \in \left[-\sqrt{2\sqrt{2}-2},\sqrt{2\sqrt{2}-2}\right] \hbox{~~~and~~~} \\[0.1in]
(0,b) \hbox{~~~for~~~} b\in\left(-\infty,-\sqrt{ 2\sqrt{2}+2}\right]\cup\left[\sqrt{2\sqrt{2}+2},\infty\right).
\end{array}$$
The reach is $\tau_C = \sqrt{\sqrt{2}-1} \approx 0.6436$ attained at the origin and $\left(\pm \sqrt{2\sqrt{2}-2},0\right)$. 
There are three bottlenecks, namely
the origin and $(\pm \sqrt{1/2},0)$
each with two closest points in $C$,
with the weak feature size being \mbox{$\wfs(C) =  \sqrt{\sqrt{2}-1}$} attained at the origin.  
Hence, $\tau_C = \wfs(C)$.

Similarly, consider the plane curve $C'\subseteq\mr^2$
defined by $d(x,r_1)^2\cdot d(x,r_2)^2\cdot d(x,r_3)^2 = 2$
where $r_1 = (1,0)$, $r_2 = (-1/2,\sqrt{3}/2)$,
and $r_3 = (-1/2,-\sqrt{3}/2)$.
The curve $C'$ is called a \emph{Cassini oval} with 3 foci 
and is shown in Figure~\ref{fig:intro_cassinis}(b)
along with its medial axis (cyan curve)
and bottleneck (red point) at the origin
which has three closest points in $C'$.
The reach is 
$$\tau_{C'} = 
\dfrac{\sqrt[3]{64 - 26\sqrt{2}}}{7}
\approx 0.4298$$
attained at the three points on the end of the medial
axis in the interior of $C'$.  
The weak feature size is $\wfs(C') = \sqrt[3]{\sqrt{2}-1} \approx 0.7454$
attained at the origin.  Hence, $\tau_{C'} < \wfs(C')$.

We note that the exact values for the reach were computed
by using the results of the numerical computation in \Cref{ex:cassiniReach}
together with the exactness recovery method in~\cite{Exactness} 
yielding minimal polynomials of $x^4+2x^2-1$
and $343x^6 - 128x^3 + 8$ for $\tau_C$~and~$\tau_{C'}$, respectively.
Hence, the algebraic degree of the reach is $4$ and $6$, respectively.

\begin{figure}[!t]
    \centering
    \begin{tabular}{ccc}
    \includegraphics[scale=0.12]{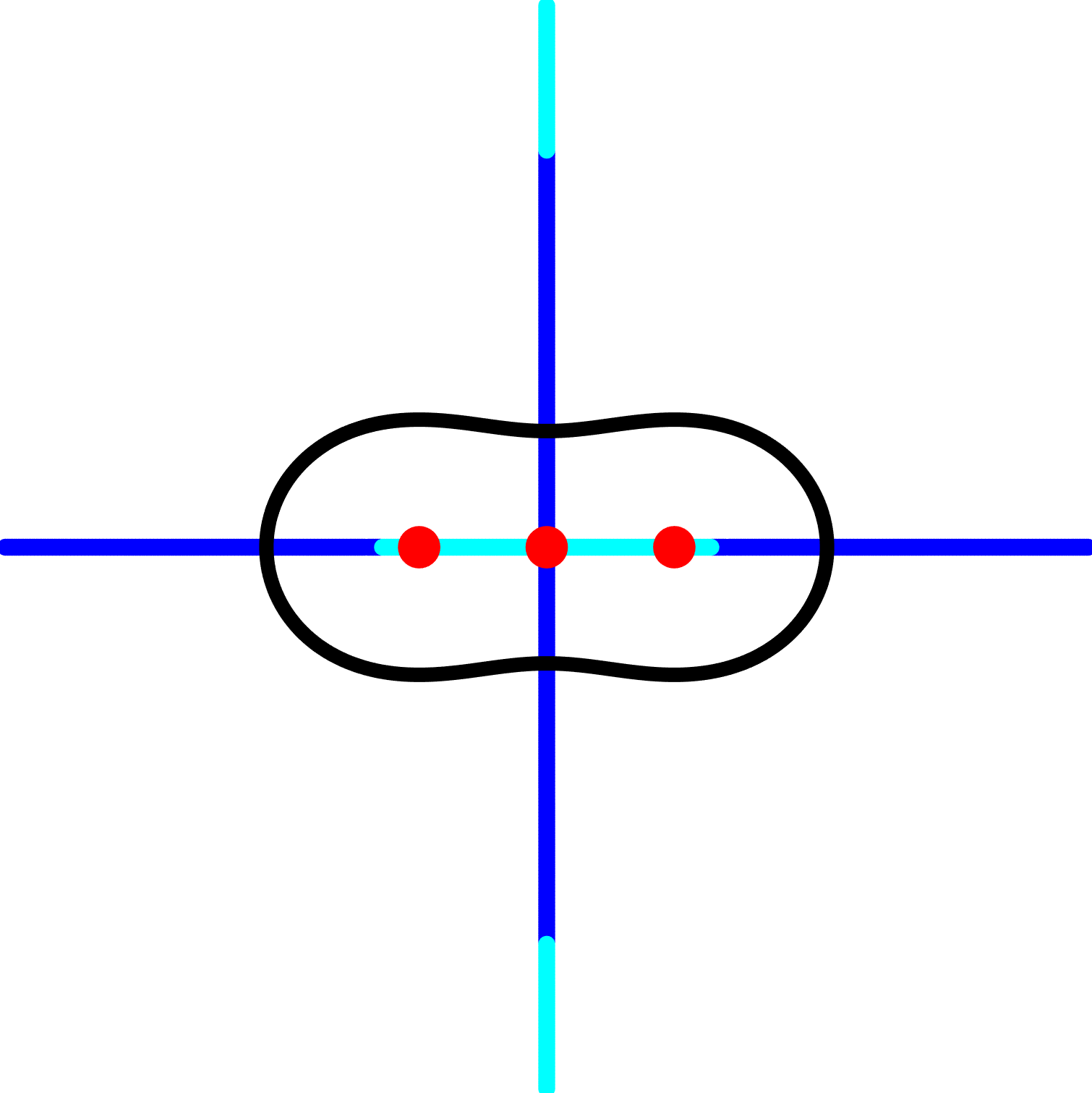} 
     & ~~~~ &
    \includegraphics[scale=0.12]{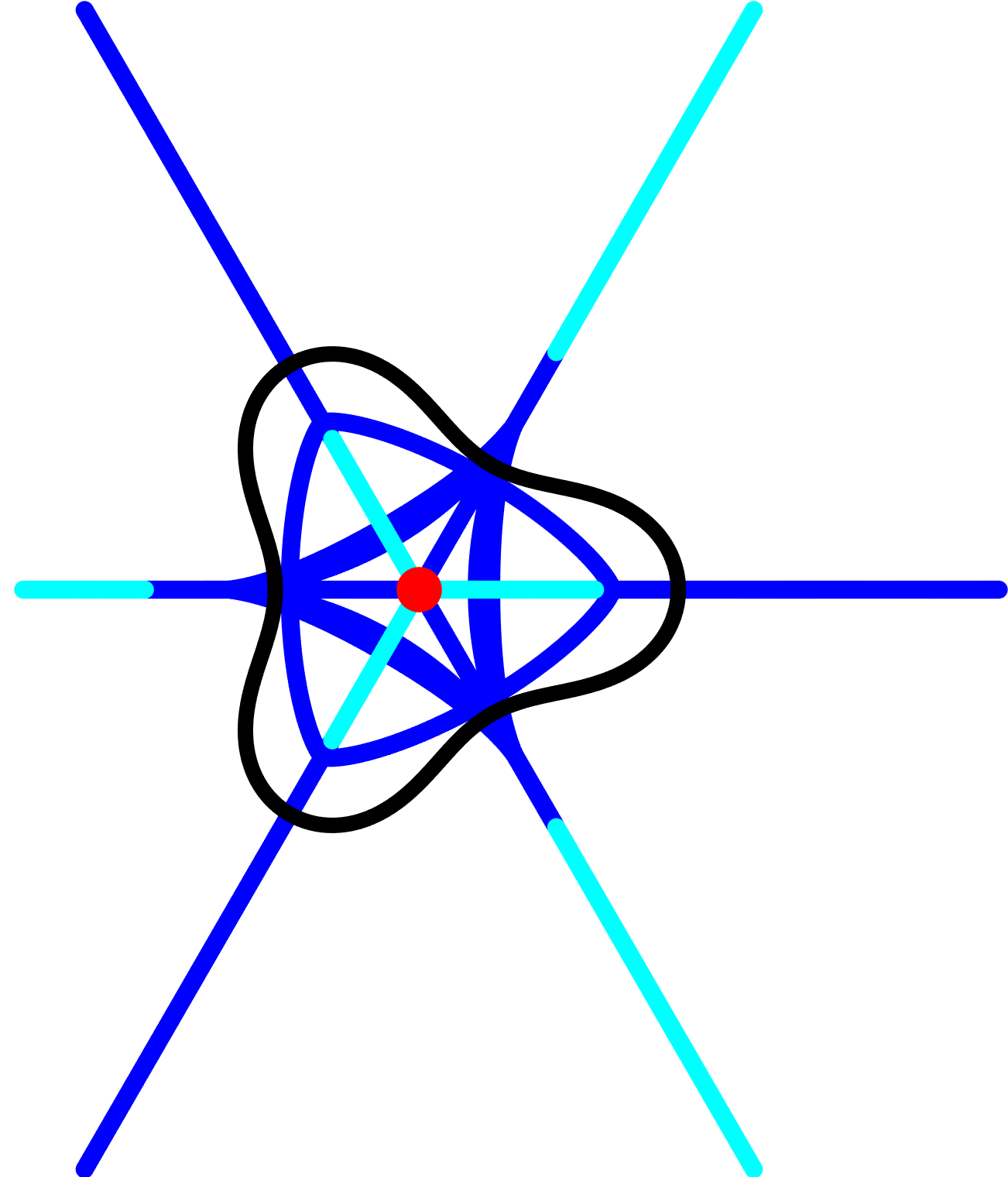} \\
    (a) & & (b) \\
    \end{tabular}
    \caption{Black curve is the Cassini oval with (a) 2 foci 
    and (b) 3 foci.  The red points are
    geometric bottlenecks and the union of the cyan curves 
    form the medial axis.}
    \label{fig:intro_cassinis}
\end{figure}
\end{example}

The following results show that distance functions $d_X:\mr^\numvars\to\mr$ enjoy some properties similar to  Morse functions in Morse theory~\cite{MMorse1934calculus} and justify studying the weak feature size in the algebraic setting. For the sake of analogy, recall that if $f:M\to\mr$ is a Morse function on a compact manifold $M$ with critical points $C_f$ then, by a theorem of A. Morse~\cite{AMorse1939Crit} and Sard~\cite{Sard1942Theorem}, $f(C_f)$ is finite.
By a fundamental theorem of Morse theory (see, e.g.,~\cite[Thm.~3.1]{Milnor1963Morse}), if $f(C_f)\cap [r_1,r_2] = \emptyset$ then $f^{-1}(-\infty,r_1]$ is a deformation retract of $f^{-1}(-\infty,r_2]$. 

\begin{theorem}\label{thm:basic_props}
Let $X$ be a nonempty and compact subset of $\mr^\numvars$, $B$ be the set of geometric bottlenecks of $X$, and $\epsilon$ and $\epsilon'$ satisfy $0 < \epsilon \leq \epsilon'$.
\begin{itemize}
    \item (Grove, 1993 \cite[Prop. 1.8]{Grove93Critical}) If $d_X(B) \cap [\epsilon,\epsilon'] = \emptyset$, then $d_X^{-1}[\epsilon,\epsilon']$ is homeomorphic to $d_X^{-1}(\epsilon)\times [\epsilon,\epsilon']$. In particular, if $\epsilon' < \wfs(X)$, then the thickening $X^\epsilon$ is a deformation retract of $X^{\epsilon'}$. 
    \item (Fu, 1985 \cite[\S 5.3]{FuTubular1985}) If $X$ is semialgebraic, then $d_X(B)$ is finite, and so $\wfs(X) > 0$.
    \item Any compact and semialgebraic set $X$ is a deformation retract of $X^\epsilon$ for some sufficiently small $\epsilon > 0$. We can conclude this in the following way. If $X$ is semialgebraic and bounded, i.e., $X$ is contained in a Euclidean ball of finite radius in $\mr^\numvars$, then $X$ is finitely triangulable \cite[Thm. 3]{lojasiewics1964triangulation}. Since any compact and finitely triangulable set is an absolute neighborhood retract (ANR) \cite[Cor. 3.5]{hanner1951anr}, any compact semialgebraic set $X$ is an ANR. For any compact ANR $X$, $X$ is a deformation retract of $X^\epsilon$ for some sufficiently small $\epsilon > 0$.
\end{itemize}
\end{theorem}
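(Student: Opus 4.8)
The statement collects three essentially known facts, and the plan is to treat each in turn; the third is already reduced in the statement itself to a chain of citations, so the real work is bullets one and two, the theorems of Grove and Fu.

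For the first bullet I would reproduce Grove's isotopy argument for distance functions. The analytic input is that on $\mr^\numvars\setminus X$ the function $d_X$ is locally Lipschitz and directionally differentiable, with
\[
d_X'(z;v)=\min_{x\in\pi_X(z)}\frac{\langle z-x,\,v\rangle}{\|z-x\|},
\]
and that $z$ fails to be a geometric bottleneck precisely when some direction $v$ has $d_X'(z;v)>0$ (one direction is the separation theorem applied to $z$ and the convex hull of $\pi_X(z)$, the other is linearity of $\langle\,\cdot\,,v\rangle$). A partition-of-unity argument then produces a locally Lipschitz vector field $v(\cdot)$ on the bottleneck-free open set with $d_X'(z;v(z))$ bounded below by a positive constant on each compact piece. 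On $d_X^{-1}[\epsilon,\epsilon']$, which by hypothesis contains no bottlenecks, integrating this field and reparametrizing each trajectory by the value of $d_X$ yields the homeomorphism $d_X^{-1}(\epsilon)\times[\epsilon,\epsilon']\to d_X^{-1}[\epsilon,\epsilon']$. The ``in particular'' is then formal: when $\epsilon'<\wfs(X)$ the interval $[\epsilon,\epsilon']$ misses $d_X(B)$, so $d_X^{-1}[\epsilon,\epsilon']$ is this product, which deformation retracts onto its $\epsilon$-slice $d_X^{-1}(\epsilon)$; since $X^{\epsilon'}=X^\epsilon\cup d_X^{-1}[\epsilon,\epsilon']$ glued along $d_X^{-1}(\epsilon)$, this retraction extends by the identity on $X^\epsilon$ to a deformation retraction of $X^{\epsilon'}$ onto $X^\epsilon$.

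For the second bullet I would follow Fu, organized in four steps. First, $B$ is semialgebraic: the incidence set $\{(z,x):x\in X,\ \|z-x\|=d_X(z)\}$ is semialgebraic since $d_X$ is a semialgebraic function, and by Carath\'eodory the condition that $z$ lie in the convex hull of $\pi_X(z)$ is a bounded existential sentence over $\numvars+1$ such points together with nonnegative barycentric coordinates summing to one, so Tarski--Seidenberg applies; hence $d_X|_B$ and its image $d_X(B)\subseteq\mr$ are semialgebraic too. Second, partition $B$ into finitely many smooth semialgebraic strata, refining so that $d_X$ restricts to a smooth function on each stratum $S$ (possible since $d_X$ is semialgebraic; this is where Sard's theorem enters). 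Third, the differential of $d_X|_S$ vanishes identically on each $S$: given $z\in S$ and a tangent vector $v$, a smooth curve $\gamma\subseteq S$ through $z$ with velocity $v$ satisfies $(d_X\circ\gamma)'(0)=d_X'(z;v)=-d_X'(z;-v)$, while the bottleneck relation $z=\sum\lambda_i x_i$ with $x_i\in\pi_X(z)$, $\lambda_i\geq 0$, $\sum\lambda_i=1$ gives $\sum_i\lambda_i\langle z-x_i,v\rangle=0$ with all $\|z-x_i\|$ equal to $d_X(z)$, whence $d_X'(z;\pm v)\leq 0$ and therefore $(d_X\circ\gamma)'(0)=0$. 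Thus $d_X|_S$ is locally constant; as $S$ has finitely many components and there are finitely many strata, $d_X(B)$ is finite. Fourth, $\wfs(X)=\inf_{z\in B}d_X(z)$ is positive, being $+\infty$ if $B=\emptyset$ and otherwise the minimum of a finite set of strictly positive reals (strictly positive because a bottleneck lies in $\mr^\numvars\setminus X$).

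The third bullet is exactly the chain indicated in the statement: a bounded semialgebraic set is finitely triangulable \cite{lojasiewics1964triangulation}, a compact finitely triangulable space is an ANR \cite{hanner1951anr}, a compact ANR is a deformation retract of some open neighborhood of itself in $\mr^\numvars$, and by compactness of $X$ that neighborhood contains $X^\epsilon$ for all sufficiently small $\epsilon>0$, so $X^\epsilon$ is itself such a neighborhood. I expect the main obstacle to be in the second bullet: the final computation above is short, but it rests on the semialgebraic stratification making $d_X$ smooth along each stratum, which is the genuinely nontrivial place where Sard's theorem together with Tarski--Seidenberg is used, and getting the one-sided directional-derivative formula for the merely Lipschitz function $d_X$ exactly right is the other point needing care. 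The gradient-like flow of the first bullet is by now standard (Grove, Cheeger, Lieutier).
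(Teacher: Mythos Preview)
The paper does not supply its own proof of this theorem: the first two bullets are simply attributed to Grove and to Fu with citations, and the third bullet's argument is the citation chain already embedded in the statement itself. Your proposal therefore goes considerably further than the paper, and the sketches you give are essentially correct reconstructions of the cited arguments.

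A couple of small remarks. In your second bullet you flag the stratification step as ``where Sard's theorem enters,'' but in fact your argument does not use Sard at all: once you have a smooth semialgebraic stratification of $B$ on which $d_X$ is smooth (which follows from semialgebraic cell decomposition, not Sard), your convex-combination computation directly forces the differential of $d_X|_S$ to vanish, hence $d_X|_S$ is locally constant. That is a clean self-contained argument; you can simply drop the reference to Sard. In the third bullet, the passage from ``$X$ is a compact ANR'' to ``$X$ is a deformation retract of $X^\epsilon$'' needs one more ingredient than you state: containment of $X^\epsilon$ in a neighborhood $U$ that deformation retracts to $X$ does not by itself guarantee the homotopy stays inside $X^\epsilon$. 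One standard fix is to note that a compact ANR embedded in $\mr^\numvars$ has arbitrarily small neighborhoods that strong deformation retract to it, and then take $\epsilon$ small enough that $X^\epsilon$ is one of them; the paper's statement glosses over exactly the same point, so you are not losing anything relative to the source.
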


The Morse Lemma implies that the set of critical points of a Morse function on a compact manifold is finite (see, e.g., \cite[Cor. 2.3]{Milnor1963Morse}). 
Nonetheless, the distance-to-$X$ function $d_X$ need not always have 
finitely many geometric bottlenecks even if $X$ is smooth, compact, and an algebraic subset of $\mr^\numvars$ (see, e.g., \Cref{ex:TwoCircles}). We consider this in more detail in \Cref{sec:wfs}.

\subsection{Persistent homology and homology inference} 
In \Cref{sec:sparse}, we will consider a computational application combining algebraic computations for $\lfs$ and $\wfs$ with sampling algorithms (e.g., \cite{di2020sampling,dufresne2019sampling}) and persistent homology to infer the Betti numbers of an algebraic manifold. In this context, the ``persistent homology pipeline'' starts with input in the form of a finite set of points $P\subseteq\mr^\numvars$ and computes the homology of simplicial complexes built from that input.
\begin{definition}\label{def:complexes} Let $P\subseteq\mr^\numvars$ be a finite set of points and $\epsilon\geq 0$.  The \emph{\u{C}ech complex} for $P$ with parameter $\epsilon$, denoted $C_P(\epsilon)$, is the nerve of the set $\{\overline{B}_p(\epsilon)\}_{p\in P}$ where $\overline{B}_x(\epsilon)$ denotes the closed ball of radius $\epsilon$ with center $x$. The \emph{Vietoris-Rips complex}, $R_P(\epsilon)$, is the simplicial complex $\{\sigma\subseteq P \mid \diam(\sigma) \leq \epsilon\}$. See, e.g., \cite[\S 3.2]{HarerBook} for details.
\end{definition} 
In practice, implemented versions of persistent homology (e.g., \cite{ripser,Bauer2021Ripser}) use the Vietoris-Rips complex for computational reasons. For $0\leq\epsilon\leq\epsilon'$, we have subcomplex inclusions  $C_P(\epsilon)\hookrightarrow C_P(\epsilon')$ and  $R_P(\epsilon)\hookrightarrow R_P(\epsilon')$.
It is convenient to assemble these inclusions into functors 
$C_P, R_P:\RR\to\simp$ where $\RR$ denotes the poset of real numbers with the standard ordering and $\simp$ the category of simplicial complexes with simplicial maps as morphisms.  By taking the $\ell^{\rm th}$ homologies with~${\mathbb F}_2$ coefficients of the values of these functors, we obtain \emph{persistence modules} which are functors $H_\ell C_P,H_\ell R_P:\RR\to\vect_{{\mathbb F}_2}$. Each persistence module has an associated \emph{rank function} which summarizes its algebraic structure.
\begin{definition}
Let $M:\RR\to\vect_{\mz/2}$ be a functor and let $\mr^2_< = \{(x,y)\in\mr^2 \mid x < y\}$. 
The \emph{rank function} of $M$ is $\rank(M):\mr^2_<\to\mz$ defined by $\rank(M)(x,y) = \rank(M(x\leq y))$. 
\end{definition}
A standard way to present the information in a rank function is via its \emph{persistence diagram}. Let $\overline{\mr} = \mr \cup \{-\infty,\infty\}$ and let $\overline{\mr}^2_<$ be defined similarly to $\mr^2_<$. 
\begin{definition}
Let $M:\RR\to\vect_{{\mathbb F}_2}$ be a persistence module. The \emph{persistence diagram} of~$M$, if one exists, is the multiset $DM$ in $\overline{\mr}^2_<$ where $\rank(M)(x,y)$ is the number of points in $DM$ strictly above and at least as far left as $(x,y)$ in $\overline{\mr}^2_<$. The multiset $DM$ is called the \emph{persistence diagram} of $M$.
\end{definition}
The persistence modules we have discussed all have persistence diagrams and we assume the same for all persistence modules going forward. Note that this approach to defining persistence diagrams is somewhat non-standard for the sake of brevity. See, e.g., \cite{OudotBook} for a more comprehensive approach. A point $(b,d)$ in $D(H_\ell R_P)$ can be regarded as representing an $\ell$-dimensional homology feature which is ``born'' at radius $b$ and ``dies'' at radius $d$ as shown in~\Cref{fig:pd}.

\begin{figure}
    \centering
    \includegraphics[scale=0.3]{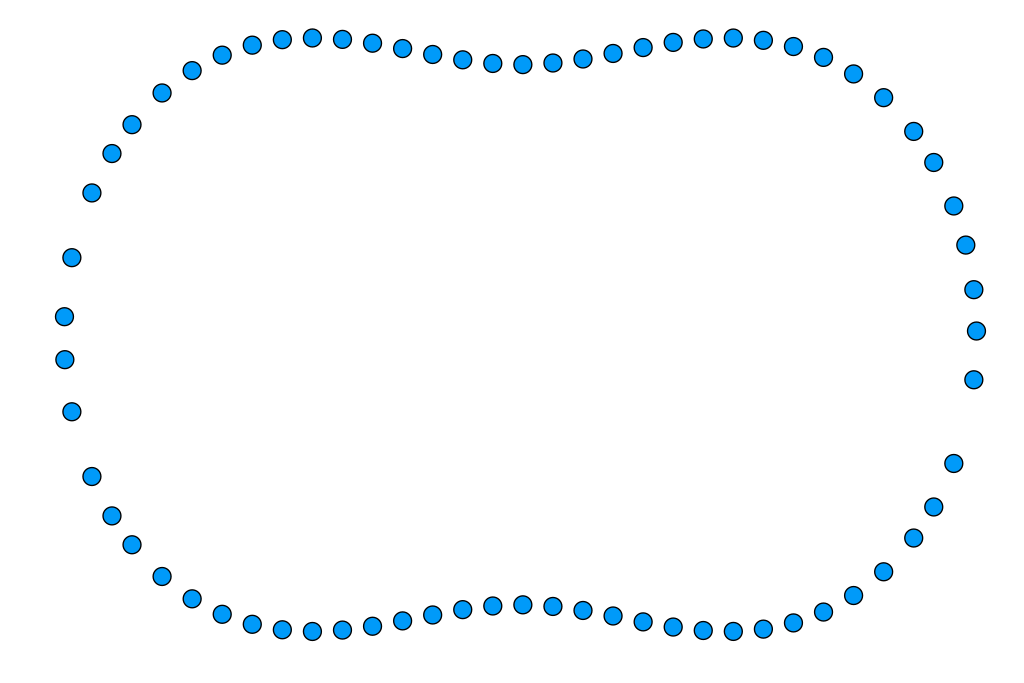}
    \includegraphics[scale=0.45]{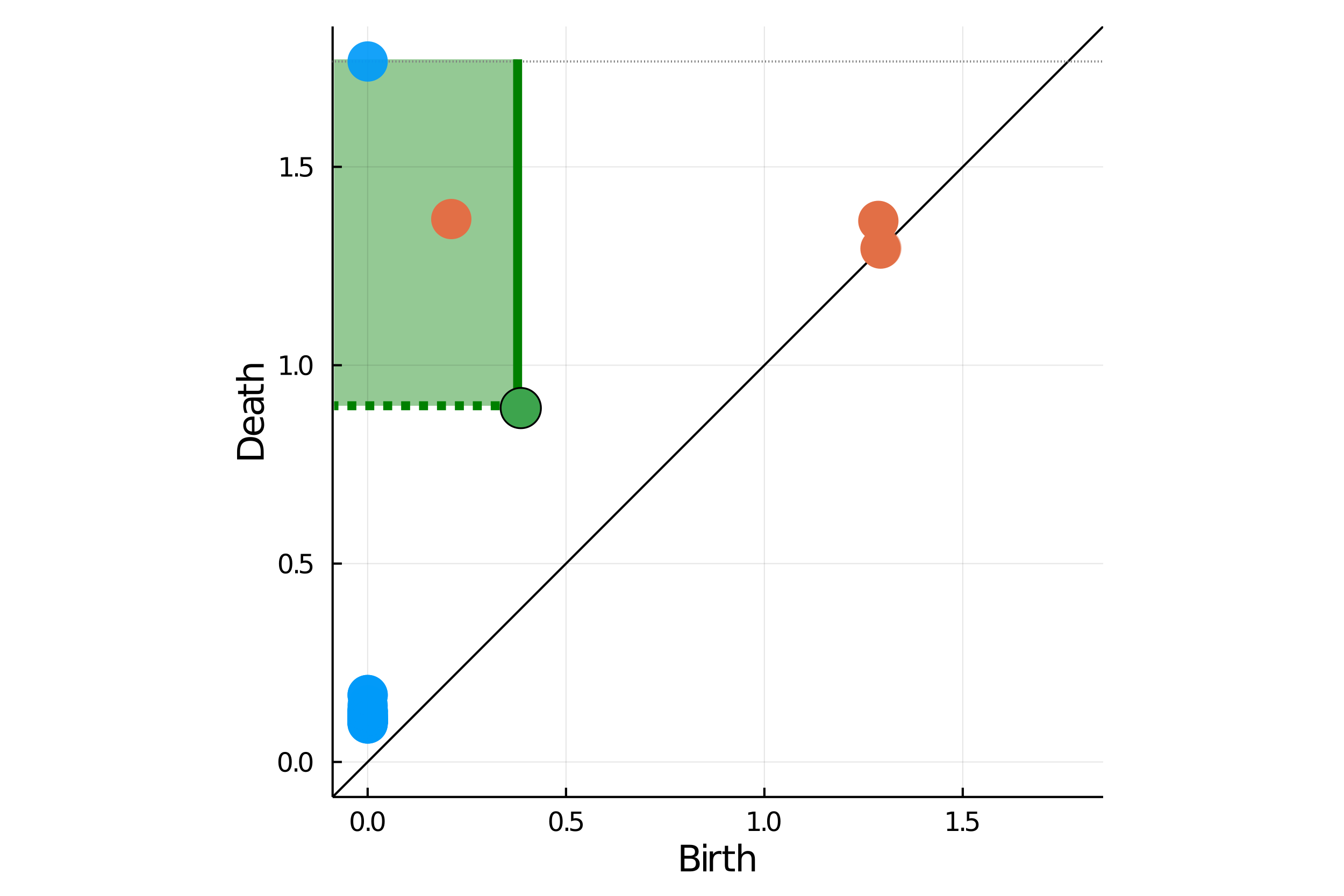}
    \caption{A sample from a curve and its Vietoris-Rips persistence diagram in degree 1 (red) and 0 (blue). The green region depicts inferred homology features as in \Cref{thm:hom_inference}.}
    \label{fig:pd}
\end{figure}

One of the essential theoretical justifications for computing persistent homology is that the Vietoris-Rips persistence diagram of a ``dense enough'' point sample from a compact space $X\subseteq\mr^\numvars$ recovers the Betti numbers of $X$.
\begin{definition}
Let $X,Y \subseteq\mr^\numvars$ be compact and $0\leq \delta\leq\epsilon$. 
The set $X$ is a \emph{$(\delta,\epsilon)$-sample} of~$Y$ if $X \subseteq Y^\delta$ and $X\subseteq Y^\epsilon$.\label{densitydef}
\end{definition} 

\begin{theorem}\cite[Homology Inference Theorem]{chazalwfs,cseh:stability} 
Let $X,\hat{X} \subseteq\mr^\numvars$ such that $X$ 
is compact and semialgebraic and $\hat{X}$ is a $(\delta,\epsilon)$-sample of $X$. If $X^{2(\epsilon+\delta)}$ is homotopy equivalent to $X$, then $\rank(H_\ell C_{\hat{X}})(\epsilon,2\epsilon+\delta)$ is the $\ell^{\rm th}$ Betti number of $X$, $\beta_\ell(X)$. Equivalently, $\beta_\ell(X)$ is the number of points above and to the left of $(\epsilon,2\epsilon+\delta)$ in $D(H_\ell C_{\hat{X}})$. \label{thm:cech_hom_inf}
\end{theorem}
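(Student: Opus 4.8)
The plan is to run the standard interleaving argument relating the \v{C}ech filtration of $\hat{X}$ to the offset filtration of $X$, using the Nerve Lemma to move between the two. First I would record the chain of inclusions produced by the sampling hypothesis: from $\hat{X}\subseteq X^\delta$ and $X\subseteq\hat{X}^\epsilon$, together with the identity $(S^a)^b=S^{a+b}$ for union-of-balls offsets (a short triangle-inequality argument), one obtains
\[
X \;\subseteq\; \hat{X}^{\epsilon} \;\subseteq\; X^{\epsilon+\delta} \;\subseteq\; \hat{X}^{2\epsilon+\delta} \;\subseteq\; X^{2(\epsilon+\delta)},
\]
all of which are inclusions whose total composite is the inclusion $X\hookrightarrow X^{2(\epsilon+\delta)}$. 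Applying $H_\ell(-;{\mathbb F}_2)$ yields a factorization $H_\ell(X)\xrightarrow{i_1}H_\ell(\hat{X}^\epsilon)\xrightarrow{i_2}H_\ell(X^{\epsilon+\delta})\xrightarrow{i_3}H_\ell(\hat{X}^{2\epsilon+\delta})\xrightarrow{i_4}H_\ell(X^{2(\epsilon+\delta)})$ whose composite is an isomorphism by the homotopy-equivalence hypothesis.

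Next I would identify the quantity to be computed with $\rank(i_3\circ i_2)$. By the Nerve Lemma each $C_{\hat{X}}(r)$ is homotopy equivalent to the union of balls $\hat{X}^r$ (the closed balls are convex, so every nonempty finite intersection is contractible), and by the persistent version of the Nerve Lemma these equivalences are natural in $r$ up to homotopy; hence $H_\ell C_{\hat{X}}$ is isomorphic as a persistence module to $r\mapsto H_\ell(\hat{X}^r)$. Under this isomorphism $\rank(H_\ell C_{\hat{X}})(\epsilon,2\epsilon+\delta)=\rank\big(H_\ell(\hat{X}^\epsilon)\to H_\ell(\hat{X}^{2\epsilon+\delta})\big)=\rank(i_3\circ i_2)$, since the inclusions $\hat{X}^\epsilon\subseteq X^{\epsilon+\delta}\subseteq\hat{X}^{2\epsilon+\delta}$ realize the structure map. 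The persistence-diagram reformulation then follows immediately from the definition of $D(H_\ell C_{\hat{X}})$.

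It remains to show $\rank(i_3 i_2)=\beta_\ell(X)$. The lower bound is formal: $\rank(i_3 i_2)\ge\rank(i_3 i_2 i_1)\ge\rank(i_4 i_3 i_2 i_1)=\rank\big(H_\ell(X)\to H_\ell(X^{2(\epsilon+\delta)})\big)=\beta_\ell(X)$, since precomposition and postcomposition do not increase rank and the total composite is an isomorphism. For the matching upper bound I would argue that the inclusion $X\hookrightarrow X^{a}$ induces an isomorphism on $H_\ell$ for \emph{every} $0\le a\le 2(\epsilon+\delta)$: combining the deformation retraction of $X$ onto a sufficiently small offset with the fact that offset inclusions below the weak feature size are deformation retractions (both in \Cref{thm:basic_props}), the hypothesis $X^{2(\epsilon+\delta)}\simeq X$ forces the whole initial segment of the offset filtration to have constant homology via inclusions. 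In particular $i_2 i_1\colon H_\ell(X)\xrightarrow{\cong}H_\ell(X^{\epsilon+\delta})$ and $i_4 i_3\colon H_\ell(X^{\epsilon+\delta})\xrightarrow{\cong}H_\ell(X^{2(\epsilon+\delta)})$ are isomorphisms, so $i_2$ is surjective and $i_3$ is injective, whence $\rank(i_3 i_2)=\dim H_\ell(X^{\epsilon+\delta})=\beta_\ell(X)$.

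The main obstacle is exactly this last step. One cannot simply invoke an abstract homotopy equivalence between the endpoints $X$ and $X^{2(\epsilon+\delta)}$, since a priori the homotopy type of the offset filtration could change and then change back; making the upper bound rigorous requires the critical-point theory of $d_X$ (Grove's deformation result, \Cref{thm:basic_props}) to propagate the equivalence to all intermediate offsets, and it is here that compactness and semialgebraicity of $X$ are genuinely used. A secondary technical point is the naturality assertion in the Nerve Lemma, for which I would cite the persistent Nerve Lemma rather than reprove it.
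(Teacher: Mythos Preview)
The paper does not give its own proof of this statement: \Cref{thm:cech_hom_inf} is quoted from \cite{chazalwfs,cseh:stability} and used as a black box (its only other appearance is as an input to the proof of \Cref{thm:hom_inference} in \Cref{appendixA}). There is therefore nothing in the paper to compare your attempt against.

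Your argument is the standard interleaving proof that underlies the cited references and is essentially correct. You are also right to flag the one genuine subtlety: the hypothesis as written---that $X^{2(\epsilon+\delta)}$ is \emph{homotopy equivalent} to $X$---is an abstract statement, whereas your sandwich argument needs the \emph{inclusion-induced} maps $H_\ell(X)\to H_\ell(X^{\epsilon+\delta})$ and $H_\ell(X^{\epsilon+\delta})\to H_\ell(X^{2(\epsilon+\delta)})$ to be isomorphisms. Your attempt to deduce this from the abstract equivalence together with \Cref{thm:basic_props} does not quite close the gap: knowing that offsets below $\wfs(X)$ are deformation retracts of one another and that $X^{2(\epsilon+\delta)}\simeq X$ abstractly does not rule out $2(\epsilon+\delta)$ lying above $\wfs(X)$ with the homotopy type having changed and then returned. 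In the sources the paper cites, the operative hypothesis is effectively $2(\epsilon+\delta)<\wfs(X)$ (equivalently, that the inclusion itself is a deformation retraction), under which your argument goes through verbatim; the residual issue is best read as an imprecision in the paper's paraphrase of the cited theorem rather than a defect in your proof.
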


There is also a version of this for Vietoris-Rips complexes.
It follows directly from, e.g.,~\cite[Thm 2.5]{coveragetop}, the Homology Inference Theorem, and an interleaving argument similar to Chazal and Lieutier's proof of the Homology Inference Theorem \cite{chazalwfs}. A proof appears in  \cref{appendixA} for the sake of completeness.

\begin{theorem}\label{thm:hom_inference}
Let $A_\numvars = \sqrt{\frac{2\numvars}{\numvars+1}}$, $\hat{X}$ be a $(\delta,\epsilon)$-sample of a semialgebraic set $X\subseteq \mr^\numvars$, and $\delta' = 2\epsilon(A_n^2 -1)+\delta A_n$. If $2(\epsilon + \delta') < \wfs(X)$ then, for $a(\delta,\epsilon) = 2\epsilon$ and $b(\delta,\epsilon) = 2(2\epsilon A_n + \delta)$, the $\ell^{\rm th}$ Betti number $\beta_\ell(X)$ is the rank of the map obtained from applying $H_\ell$ to the inclusion map $R_{\hat{X}}(a(\delta,\epsilon)) \subseteq R_{\hat{X}}(b(\delta,\epsilon))$.
\end{theorem}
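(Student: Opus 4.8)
The plan is to bound the rank of the map $H_\ell R_{\hat X}(a)\to H_\ell R_{\hat X}(b)$ induced by the inclusion $R_{\hat X}(a)\subseteq R_{\hat X}(b)$, where $a=2\epsilon$ and $b=2(2\epsilon A_\numvars+\delta)$, from both sides by ranks of suitable \u{C}ech‑complex maps, each of which I will pin down via the Homology Inference Theorem. Three facts are needed. (i) The Rips--\u{C}ech interleaving (a consequence of Jung's theorem; see, e.g., \cite[Thm.~2.5]{coveragetop}): for every finite $P\subseteq\mr^\numvars$ and $r\ge 0$ there are inclusions $C_P(r)\subseteq R_P(2r)\subseteq C_P(A_\numvars r)$; since $R_P(r)$ and $C_P(r)$ are all subcomplexes of the full simplex on $P$, every such inclusion is the identity on vertices, so any two of them with equal source and target coincide and all diagrams built from them commute strictly. (ii) The persistent nerve lemma: the homotopy equivalences $C_{\hat X}(r)\simeq\hat X^r$ given by the nerve lemma (closed balls form a good cover) can be chosen to commute up to homotopy with the inclusions $C_{\hat X}(r)\subseteq C_{\hat X}(r')$ and $\hat X^r\subseteq\hat X^{r'}$, so the induced maps on $H_\ell$ agree. (iii) Since $\hat X$ is a $(\delta,\epsilon)$-sample we have $\hat X^r\subseteq X^{r+\delta}$ and $X^r\subseteq\hat X^{r+\epsilon}$ for all $r$, and by \Cref{thm:basic_props} the inclusion $X^r\subseteq X^{r'}$ is a homotopy equivalence whenever $0\le r\le r'<\wfs(X)$ (for $r=0$, combine the triangulability/ANR statement with Grove's deformation retraction); hence $\dim_{\mathbb{F}_2}H_\ell(X^r;\mathbb{F}_2)=\beta_\ell(X)<\infty$ for all such $r$ and all these maps are isomorphisms.

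First I would prove the following quantitative form of the Homology Inference Theorem, which is the technical core: if $\epsilon\le u$, $v\ge u+\delta+\epsilon$, and $v+\delta<\wfs(X)$, then the inclusion‑induced map $H_\ell C_{\hat X}(u)\to H_\ell C_{\hat X}(v)$ has rank exactly $\beta_\ell(X)$. By (ii) it is equivalent to show $\rank\big(H_\ell(\hat X^u)\to H_\ell(\hat X^v)\big)=\beta_\ell(X)$. Using (iii) one has the chain of set inclusions $X^{u-\epsilon}\subseteq\hat X^u\subseteq X^{u+\delta}\subseteq\hat X^{u+\delta+\epsilon}\subseteq\hat X^v\subseteq X^{v+\delta}$ (the first uses $u\ge\epsilon$, the fourth uses $v\ge u+\delta+\epsilon$). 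Since $u-\epsilon\ge 0$ and $u+\delta\le v+\delta<\wfs(X)$, the composite inclusions $X^{u-\epsilon}\subseteq X^{u+\delta}$ and $X^{u+\delta}\subseteq X^{v+\delta}$ induce isomorphisms on $H_\ell$; the first forces $H_\ell(\hat X^u)\to H_\ell(X^{u+\delta})$ to be surjective and the second forces $H_\ell(X^{u+\delta})\to H_\ell(\hat X^v)$ to be injective. Because $H_\ell(\hat X^u)\to H_\ell(\hat X^v)$ is the composite of these two maps, its rank equals $\dim_{\mathbb{F}_2}H_\ell(X^{u+\delta};\mathbb{F}_2)=\beta_\ell(X)$. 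This step is where the ``interleaving argument'' lives; the one genuinely delicate point is (ii) — making the nerve identifications natural enough that the rank computations transport to \u{C}ech complexes — which is standard, cf.\ \cite{chazalwfs}.

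Finally I would carry out the sandwich. For the upper bound, the inclusions $R_{\hat X}(2\epsilon)\subseteq C_{\hat X}(A_\numvars\epsilon)\subseteq C_{\hat X}(b/2)\subseteq R_{\hat X}(b)$ — valid by (i), by $A_\numvars\epsilon\le 2A_\numvars\epsilon+\delta=b/2$, and by $C_P(r)\subseteq R_P(2r)$ — exhibit $H_\ell R_{\hat X}(a)\to H_\ell R_{\hat X}(b)$ as factoring through the map $H_\ell C_{\hat X}(A_\numvars\epsilon)\to H_\ell C_{\hat X}(b/2)$, so its rank is at most the rank of that map, which is $\beta_\ell(X)$ by the lemma with $(u,v)=(A_\numvars\epsilon,\,b/2)$; its only nontrivial hypothesis, $v+\delta<\wfs(X)$, reduces (using $2(\epsilon+\delta')<\wfs(X)$ and $2\delta'=4\epsilon A_\numvars^2-4\epsilon+2\delta A_\numvars$) to $2\epsilon(2A_\numvars^2-A_\numvars-1)+2\delta(A_\numvars-1)\ge 0$, which holds since $A_\numvars\ge 1$ for all $\numvars\ge 1$. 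For the lower bound, the inclusions $C_{\hat X}(\epsilon)\subseteq R_{\hat X}(2\epsilon)\subseteq R_{\hat X}(b)\subseteq C_{\hat X}(A_\numvars b/2)$ show that $H_\ell C_{\hat X}(\epsilon)\to H_\ell C_{\hat X}(A_\numvars b/2)$ factors through $H_\ell R_{\hat X}(a)\to H_\ell R_{\hat X}(b)$, so the rank of the latter is at least the rank of the former, which is $\beta_\ell(X)$ by the lemma with $(u,v)=(\epsilon,\,A_\numvars b/2)$; here $v+\delta<\wfs(X)$ reduces to $2\epsilon(A_\numvars^2-1)+\delta(A_\numvars-1)\ge 0$, again immediate. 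Combining the two bounds, $H_\ell R_{\hat X}(a)\to H_\ell R_{\hat X}(b)$ has rank exactly $\beta_\ell(X)$; the definition of $\delta'$ is precisely calibrated to make the two displayed inequalities hold. I expect the main obstacle to be the bookkeeping in the quantitative lemma — in particular establishing the functorial nerve lemma carefully and checking that both the surjectivity and the injectivity directions come out — rather than the closing parameter arithmetic.
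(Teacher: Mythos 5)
Your proposal is correct and follows essentially the same Rips--\u{C}ech interleaving sandwich as the paper's appendix proof: the chains $R_{\hat X}(2\epsilon)\subseteq C_{\hat X}(A_\numvars\epsilon)\subseteq C_{\hat X}(2A_\numvars\epsilon+\delta)\subseteq R_{\hat X}(b)$ and $C_{\hat X}(\epsilon)\subseteq R_{\hat X}(2\epsilon)\subseteq R_{\hat X}(b)\subseteq C_{\hat X}(2\epsilon+\delta')$ are exactly the paper's single six-term chain split at the Rips stages, and the two \u{C}ech maps you isolate are the paper's ``internal'' and ``external'' inclusions, used in the same way to bound the Rips rank from above and below. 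The only real difference is that the paper invokes its Theorem~\ref{thm:cech_hom_inf} directly for both \u{C}ech inclusions, while you re-derive that input as a quantitative lemma from the thickening inclusions $X^{u-\epsilon}\subseteq\hat X^u\subseteq X^{u+\delta}\subseteq\hat X^{u+\delta+\epsilon}\subseteq\hat X^v\subseteq X^{v+\delta}$ and the persistent nerve lemma; this is a correct and standard expansion of the citation, not a different method.
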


In the algebraic setting, we can construct the following ``homology inference pipeline'' (\Cref{alg:homology_inference}) with persistent homology computations, provided we can compute weak feature sizes and samples of algebraic manifolds. The latter is possible with existing algorithms~\cite{di2020sampling,dufresne2019sampling}.

\begin{algorithm}
    \scriptsize
	\SetKwInOut{input}{Input}\SetKwInOut{output}{Output}\SetKwFunction{Return}{Return}
	\input{A polynomial system $F$ defining a smooth and equidimensional algebraic variety with $X = V(F)\cap\mr^n$ compact and a degree of homology $\ell \geq 0$.}
	\output{A persistence diagram which encodes the $\ell^\text{th}$ $\mathbb{F}_2$-Betti number of $X$ along with additional information.}
    Compute $0 < \omega < \wfs(X)$ \;
    Compute using $\omega$ a $(\delta,\epsilon)$-sample of $X$, $\hat{X}$, which satisfies the assumptions of \Cref{thm:hom_inference} \;
    Compute (e.g., with Ripser \cite{ripser}) and return the Vietoris-Rips persistence diagram of $\hat{X}$ \;
	\caption{\textsc{PH Homology Inference}}
	\label{alg:homology_inference}
\end{algorithm}
\DecMargin{.25em}

\section{Algebraic medial axis, reach, and local feature size}\label{sec:lfs_compute}

The geometric definition of the medial axis and hence the reach and local feature
size in Section~\ref{sec:background} utilize a semialgebraic condition
via closest points.  By replacing closest points with a criticality condition, 
the following provides an algebraic relaxation that is amenable to
computational algebraic geometry over $\CC$.
As before, we assume that \mbox{$X=V(F)\cap\mr^\numvars$} is nonempty and compact 
where $F$ is a polynomial system with real coefficients.

\begin{definition}
Let $F = \{f_1,\dots,f_m\}$ be a system of polynomials in $\numvars$ variables 
with real coefficients such that $V(F)$ is equidimensional and smooth of codimension $c$. 
The \emph{medial axis correspondence} of $F$, denoted $M(F)$, is the algebraic subset of $\CC^\numvars \times \CC^\numvars \times \CC^\numvars$ of points $(x_1,x_2,z)$ which satisfy the equations: 
\[ F(x_1) = F(x_2) = 0, \ \ \ \text{rank}[x_i-z \ JF(x_i)^T]\leq c \  \text{ for } i = 1,2, \ \text{ and } d(x_1, z)^2 = d(x_2, z)^2\]
where $JF(p)$ is the Jacobian matrix of $F$ evaluated at $p$.
If $\Delta$ is the subset of points $(x_1,x_2,z)$ where any two of the entries are equal,
the \emph{algebraic medial axis of $F$} is the 
closure of the image of the projection of $M(F)\setminus\Delta$ onto its third factor.
\label{def:alg_medial}
\end{definition}

In particular, the condition $\text{rank}[x_i-z \ JF(x_i)^T]\leq c$
enforces that $d(x_i,z)^2$ is critical for $x_i\in V(F)$.  
Therefore, it is clear that the algebraic medial axis contains the medial axis~$\medial_X$.

As with the medial axis, one can consider subsets of the algebraic medial axis based
on the number of equidistant critical points.  That is, for $k\geq2$, 
the \emph{$k$-medial axis correspondence of $F$}, denoted $M_k(F)$, is the 
algebraic subset of $\left(\CC^\numvars\right)^k\times \CC^\numvars$ of points
$(x_1,\dots,x_k,z)$ which satisfy the equations:
\[ \begin{array}{cc} 
F(x_i) = 0 & \multirow{2}{*}{$\text{ for } i = 1,\dots,k$} \\
\text{rank}[x_i-z \ JF(x_i)^T]\leq c &
\end{array}
\text{ and } d(x_1, z)^2 = d(x_j, z)^2 \  \text{ for } j = 2,\dots,k. \]
If $\Delta$ is the subset of points $(x_1,\dots,x_k,z)$ where any two of the entries are equal,
the \emph{algebraic $k$-medial axis of $F$} is the 
closure of the image of the projection of $M_k(F)\setminus\Delta$ onto its last~factor.
In particular, the algebraic medial axis is the algebraic $2$-medial axis.

\begin{example}\label{ex:cassini2algmedial}
For the Cassini oval with 2-foci in \Cref{ex:cassini_def}, the algebraic medial axis
is the union of the coordinate axes.  
Moreover, for both the Cassini oval with 2- and 3-foci in \Cref{fig:intro_cassinis}, 
the cyan curves form the medial axis and the union
of the blue and cyan curves form the algebraic medial axis.
\end{example}

\begin{example}\label{ex:AlgMedialAxisGeneral}
The algebraic medial axis of a general plane curve of degree $d\geq 2$ is also a plane curve.
After randomly selecting coefficients, we used
{\tt Bertini}~\cite{bertini} to compute the degree of the algebraic medial
axis for $2\leq d \leq 9$ as shown in the following table:
$$\begin{array}{c|c}
d & \hbox{degree of algebraic medial axis} \\
\hline
2 & 2 \\
3 & 30 \\
4 & 120 \\
5 & 320 \\
6 & 690 \\
7 & 1302 \\
8 & 2240 \\
9 & 3600 
\end{array}$$
In particular, for $2\leq d \leq 9$, the degree of the 
algebraic medial axis for a general plane curve of degree $d$
is 
$$\binom{d}{2}(d^2+3d-8) = \frac{d(d-1)(d^2+3d-8)}{2}$$
and we conjecture that this formula holds for all $d\geq2$.
\end{example}

We can investigate the reach and local feature size by considering optimization problems on $M(F)$. The solution to $\min\{d(x_1,z)^2 \mid (x_1,x_2,z)\in M(F)\setminus\Delta , d(x_1,z)^2 > 0 \}$, for instance, is a lower bound on the reach. By using first-order critical conditions on $M(F)$, i.e. Lagrange multipliers, we can define critical conditions for the reach and and local feature size.

\begin{definition}
Let $F = \{f_1,\dots,f_m\}$ be a system of polynomials in $\numvars$ variables 
with real coefficients such that $V(F)$ is equidimensional and smooth of codimension $c$. 
The \emph{critical reach correspondence} of $F$, denoted $C(F)$, 
corresponds with first-order critical conditions of
$d(x_1,z)^2$ on $M(F)$.  

Additionally, for $w\in\CC^n$, the \emph{critical local feature size correspondence} of $F$ with respect to $w$, denoted $L(F,w)$, corresponds
with first-order critical conditions of $d(w,z)^2$ on $M(F)$.
\end{definition}

Since the reach and local feature size are defined in terms of minimality conditions,
they are captured in the critical reach and critical local feature size correspondences, respectively.

In order to write down explicit equations, a choice needs to be made
on how to enforce rank and first-order criticality conditions.  
As an illustration, consider the case when $m = c$
using a null space approach, e.g., see \cite{RankDefSet}.
Since $V(F)$ is smooth of codimension $c$, for $i=1,2$, 
$\text{rank}[x_i-z \ JF(x_i)^T]\leq c$ is true if and only if 
$$x_i-z + JF(x_i)^T \lambda_{i} = 0$$
for some $\lambda_i\in\CC^c$.  
Hence, an alternative formulation for the medial axis correspondence to that in \Cref{def:alg_medial} consists of $(x_1,x_2,z,\lambda_1,\lambda_2)$ where
\begin{equation}\label{eq:MedialAxisCor}
F(x_1) = F(x_2) = 0, \ \ \ x_i-z + JF(x_i)^T \lambda_{i} = 0 \  \text{ for } i = 1,2, \ \text{ and } d(x_1, z)^2 = d(x_2, z)^2\text{.} 
\end{equation}
This system, say $F_M$, consists of $2c+2n+1$ equations in $3n+2c$ variables. After removing~$\Delta$ and projecting, one expects the algebraic medial axis to be a hypersurface in $\CC^n$.  

Let $\delta_0\in\CC$ and $\delta_1\in\CC^{2n+2c+1}$ and treat
$[\delta_0,\delta_1]\in\PP^{2n+2c+1}$.  Then, the critical reach correspondence $C(F)$ 
on \eqref{eq:MedialAxisCor} corresponds with the set of points $(x_1,x_2,z,\lambda_1,\lambda_2,[\delta_0,\delta_1])$ with
$$\nabla\left(d(x_1,z)^2\right)^T \delta_0 + (J(F_M)(x_1,x_2,z,\lambda_1,\lambda_2))^T\delta_1 = 0, F_M(x_1,x_2,z,\lambda_1,\lambda_2) = 0 $$
which consists of a well-constrained system consisting of $4c+5n+1$ equations.

Similarly, the critical conditions $L(F,w)$ for $d(w,z)^2$
on \eqref{eq:MedialAxisCor} correspond with
$$\nabla\left(d(w,z)^2\right)^T \delta_0 + (J(F_M)(x_1,x_2,z,\lambda_1,\lambda_2))^T\delta_1 = 0, F_M(x_1,x_2,z,\lambda_1,\lambda_2) = 0 $$

The following characterizes components of these critical correspondences.

\begin{theorem}\label{thm:reach_and_lfs}
Let $F$ be a polynomial system such that $V(F)$ is smooth and equidimensional of codimension $c$.
\begin{itemize}
    \item[(a)] Let $D:M(F)\to\CC$ be defined by $(x_1,x_2,z)\mapsto d(x_1,z)^2$. 
Then, $D$ is constant on every connected component of $C(F)$ with projection onto $M(F)$ not contained in $\Delta$.
    \item[(b)] Fix $w\in\CC^\numvars$ and let $D_w:M(F)\to\CC$ be defined by $(x_1,x_2,z)\mapsto d(w,z)^2$. Then, $D_w$ is constant on every connected component of $L(F,w)$ with projection onto $M(F)$ not contained in $\Delta$.    
\end{itemize}
\end{theorem}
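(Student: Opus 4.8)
The plan is to recognize the statement as an instance of the elementary principle that a regular function is constant along the connected components of the critical locus of its restriction to a variety; the only real work is controlling the extra points that the projectivization of the Lagrange multipliers adds to $C(F)$ (and to $L(F,w)$). First I would fix an explicit model. Taking $m=c$ and the null-space form \eqref{eq:MedialAxisCor}, identify $M(F)$ with $W=\{F_M=0\}\subseteq\CC^{3n+2c}$ (this is a bijection over the smooth locus of $V(F)$, since there $JF(x_i)$ has full rank and $\lambda_i$ is determined by $(x_i,z)$), set $g=d(x_1,z)^2$, and regard $C(F)\subseteq W\times\PP^{2n+2c+1}$ as the common zero set of $\delta_0\nabla g+JF_M^{T}\delta_1=0$ and $F_M=0$. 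Under the projections $C(F)\to W\to M(F)$ the function $D$ pulls back to $g$, so it suffices to show $g$ is constant on every connected component $C$ of $C(F)$ with $\pi(C)\not\subseteq\Delta$. The general rank formulation is treated identically, with the chosen criticality equations in place of \eqref{eq:MedialAxisCor}.

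Next I would isolate the ``honest'' part $U\subseteq C(F)$ of points $(p,[\delta_0:\delta_1])$ with $p$ a smooth point of $W$, $p\notin\Delta$, and $\delta_0\neq 0$. Over the smooth locus of $W$ the Jacobian $JF_M(p)$ has full row rank, so (normalizing $\delta_0=1$) $\delta_1$ is uniquely determined; hence the projection restricts to an isomorphism from $U$ onto the classical critical locus $K=\{p\in W_{\mathrm{sm}}\setminus\Delta:\ \nabla g(p)\in\im JF_M(p)^{T}\}=\{p:\ dg_p|_{T_pW}=0\}$. Decompose $K$ into irreducible components $K_1,K_2,\dots$. At a smooth point $p$ of $K_i$ one has $T_pK_i\subseteq T_pW$ and $dg_p|_{T_pW}=0$, so $d(g|_{K_i})_p=0$; since the smooth locus of the irreducible variety $K_i$ is connected and dense, $g$ is constant on $K_i$, say $g\equiv a_i$, and then $g\equiv a_i$ on the closure $\overline{K_i}$ in $C(F)$ by continuity of the polynomial $g$. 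It then remains to reassemble: each $\overline{K_i}$ is connected, hence is contained in $C$ as soon as it meets $C$; grouping the $\overline{K_i}\subseteq C$ by their value $a_i$ and taking closures produces finitely many closed subsets of $C$, pairwise disjoint (two of them meeting would force two distinct values to coincide on a common point), whose union is $\overline{\,\bigcup_{\overline{K_i}\subseteq C}\overline{K_i}\,}$; if this union is all of $C$, connectedness of $C$ forces a single value, i.e.\ $g$ is constant on $C$. Part~(b) is the same argument verbatim with $g=d(x_1,z)^2$ replaced by $g_w=d(w,z)^2$ and $C(F)$ by $L(F,w)$; nothing above uses the particular shape of $g$.

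The step I expect to be the genuine obstacle is the density hypothesis in the reassembly: that $\bigcup_{\overline{K_i}\subseteq C}\overline{K_i}$ is dense in $C$ whenever $\pi(C)\not\subseteq\Delta$. Equivalently, one must rule out components of $C(F)$ sitting over the locus of $W$ where $JF_M$ drops rank and away from $\Delta$: at such a point $p$ the solutions with $\delta_0=0$ become admissible (any $\delta_1\in\ker JF_M(p)^{T}$ works, with \emph{no} constraint on $\nabla g(p)$), so the fibers of $C(F)\to M(F)$ jump in dimension and $D$ a priori varies over them. This is exactly where the hypotheses that $V(F)$ is smooth and equidimensional must enter. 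I would handle it by computing $JF_M$ directly from \eqref{eq:MedialAxisCor}: realize $W$ as the fiber product over the $z$-coordinate of two copies of the critical-point bundle $N=\{(x,z,\lambda):x\in V(F),\ x-z+JF(x)^{T}\lambda=0\}\cong V(F)\times\CC^{c}$, cut by the equidistance equation, and check that $F_M$ is a submersion along $W$ except where at least one of the two critical points is degenerate \emph{and} the equidistance equation becomes dependent on the others --- a configuration that forces two of $x_1,x_2,z$ to collide, i.e.\ lands in $\Delta$ (the remaining, lower-dimensional locus where $z$ is simultaneously a focal point of $V(F)$ at both $x_1$ and $x_2$ must be shown to add nothing to $C(F)$ beyond $\overline{U}$, using once more that $D$ is the pulled-back squared distance). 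Once this is established, the reassembly above closes both parts, and since an algebraic set has finitely many connected components one also recovers the finiteness of critical values underlying the lower-bound algorithm.
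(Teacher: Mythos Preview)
Your skeleton is the same as the paper's: show $D$ is constant on each irreducible component not projecting into $\Delta$, then patch to connected components by noting that overlapping irreducible components must share their constant value. The paper does both steps in two sentences: ``constant on irreducible components'' is attributed to ``the construction and the algebraic version of Sard's Theorem,'' and the extension to connected components is the same intersection argument you give.

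The substantive difference is how the first step is handled. The paper applies Sard directly to $D$ restricted to an irreducible component $C'\subseteq C(F)$: since $C(F)$ is by definition the locus of first-order critical conditions for $D$ on $M(F)$, the map $D|_{C'}$ cannot be dominant onto $\CC$, hence (as $C'$ is irreducible) its image is a point. You instead pass through the projection, isolate the open set $U=\{\delta_0\neq 0\}$ over $W_{\mathrm{sm}}\setminus\Delta$, identify it with the classical critical locus $K$, prove constancy on each $K_i$ by the differential argument, and then try to reassemble. This forces you to confront the possibility of components of $C(F)$ with $\delta_0\equiv 0$ lying over the rank-drop locus of $JF_M$; you correctly flag this as the crux and outline a Jacobian computation to show such components project into $\Delta$.

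That extra work is real but largely self-imposed by your choice of route. The paper's framing sidesteps the $\delta_0=0$ issue by treating $C(F)$ as, by construction, the first-order critical locus (so that Sard applies to $D|_{C'}$ immediately), rather than as the zero set of the displayed bihomogeneous system together with its projective compactification in the multiplier. If you adopt the explicit projectivized model, your concern is legitimate and your proposed resolution is on the right track; if you adopt the intrinsic definition the paper leans on, the obstacle evaporates. Either way, your argument for part~(b) by verbatim substitution is fine and matches the paper's ``similar proof omitted.''
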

\begin{proof}
We prove (a) and omit a similar proof of statement (b).
The fact that $D$ is constant on every irreducible component of $C(F)$ with projection not contained in $\Delta$ follows directly from the construction and the algebraic version of Sard's Theorem, e.g., see \cite[Thm~A.4.10]{somnag}. Since irreducible components are connected, each connected component must be the union of irreducible components. Furthermore, any irreducible component which is not a connected component must intersect at least one other distinct irreducible component. Thus, the constancy of $D$ can be extended to connected components yielding (a).
\end{proof}

Let $\mathcal{C}(F)$ denote the union of connected components of $C(F)$ with projection onto $M(F)$ not contained in $\Delta$ and similarly for $\mathcal{L}(F,w)$. Clearly, the reach is a value of $d(x_1,z)^2$ on~$\mathcal{C}(F)$.
In fact, it is the minimum positive critical value of $d(x_1,z)^2$ on $\mathcal{C}(F)$
for which there is a real point that attains that critical value. 
Since there can only be finitely many critical values, 
this immediately provides an approach to compute the reach as follows.
First, one computes a finite set of points that contains at least
one point in each connected component of $\mathcal{C}$.
Then, one evaluates $d(x_1,z)^2$ on the finite set of points to obtain the finite set of critical values.  
Immediately from this algebraic computation, 
one has that the minimum of the positive critical values is 
a lower bound on the reach.  To obtain the actual value of the reach, 
one would need to employ an additional reality test, e.g., \cite{RealPoints}, to 
test for the existence of real points on the corresponding connected components.
By searching in an increasing order starting with the minimum positive critical value,
the reach is determined when a real point exists on the corresponding connected components.

Using numerical algebraic geometry, e.g., see \cite{bertinibook,somnag}, 
there are several approaches using homotopy continuation that can be used
to compute a finite set of points containing at least one on each connected component.
For example, parameter homotopies \cite{CoeffParam} can be used to provide such a set.
By looking at a finer decomposition based on irreducibility rather than connectedness,
one can compute a finite set of points containing at least one on each irreducible component
using a first-order general homotopy~\cite{FirstOrderGen}.   
Another approach is to utilize a sequence of homotopies 
based on using linear slicing via a cascade \cite{Cascade} or regenerative
cascade \cite{RegenCascade}.
This last approach actually computes witness point sets (see~\cite{bertinibook,somnag}
for more details) which can then be used directly for reality testing
via~\cite{RealPoints} when one expects positive-dimensional components.
When the set of critical points is finite, all approaches yield
the entire set of critical points and reality testing simply decides
the reality of each~critical~point.

A similar argument follows
for the local feature size as well.  Moreover, one can treat $w$ as a parameter and utilize a parameter homotopy \cite{CoeffParam} to perform this computation efficiently at many different points. We summarize this in the following.

\begin{corollary}\label{cor:reach_and_lfs_hom}
Let $F$ be a polynomial system in $\numvars$ variables with real coefficients such that $V(F)$ 
is smooth and equidimensional of codimension $c$
and $X=V(F)\cap\mr^\numvars$ is nonempty and compact.  Fix $w\in\mr^\numvars$ and let $D$
and $D_w$ be as in \Cref{thm:reach_and_lfs}.
\begin{itemize}
    \item[(a)] Using a parameter homotopy~\cite{CoeffParam}, one can compute a finite set of points $S$
    which contains at least one point in each connected component of $\mathcal{C}(F)$.  Then,
\begin{equation}\label{eq:ReachLowerBound}
    0<\min_{s\in S \text{~with~} D(s) > 0} \sqrt{D(s)} \leq \tau_X.
    \end{equation}
    \item[(b)] Using a parameter homotopy~\cite{CoeffParam}, one can compute a finite set of points $S_w$
    which contains at least one point in each connected component of $\mathcal{L}(F,w)$.  Then,
    \begin{equation}\label{eq:LFSLowerBound}
    0<\min_{s\in S_w \text{~with~} D_w(s) > 0} \sqrt{D_w(s)} \leq \lfs(w).
    \end{equation}
\end{itemize}
\end{corollary}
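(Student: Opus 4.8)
The plan is to package the groundwork laid before the statement. Three ingredients are already available: \Cref{thm:reach_and_lfs}(a), which says $D$ is constant on every connected component of $\mathcal{C}(F)$; the observation preceding the statement that the reach is the smallest positive critical value of $D$ on $\mathcal{C}(F)$ that is attained at a real point, so that in particular $\tau_X^2$ occurs among the values $D$ takes on $\mathcal{C}(F)$; and the standard fact from numerical algebraic geometry that a coefficient-parameter homotopy \cite{CoeffParam} computes a finite set of points meeting every connected component of an algebraic set. The argument for (b) is identical after replacing $D$, $C(F)$, $\mathcal{C}(F)$, $\tau_X$ by $D_w$, $L(F,w)$, $\mathcal{L}(F,w)$, $\lfs(w)$, using that $w$ appears only polynomially in the defining system so that a single family parametrized by $w$ (together with the generic slicing data) covers all $w\in\mr^\numvars$ at once; the bound is of interest exactly when $\lfs(w)>0$, i.e.\ when $w\notin\medial_X$, which holds automatically for $w\in X$ since $\tau_X>0$.

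For the homotopy step I would take the explicit well-constrained system cutting out $C(F)$ assembled before the statement in the case $m=c$ — the equations \eqref{eq:MedialAxisCor} for $M(F)$, the first-order conditions $\nabla(d(x_1,z)^2)^T\delta_0+J(F_M)^T\delta_1=0$ with $[\delta_0,\delta_1]\in\PP^{2\numvars+2c+1}$, and, when $\dim\mathcal{C}(F)>0$, generic affine-linear slices of complementary dimension. Viewing the coefficients of the slices (and of a suitable start system) as parameters, one solves a generic instance and tracks to the target instance; by the standard theory \cite{CoeffParam,bertinibook,somnag} the resulting finite set meets every connected component of $C(F)$. Discarding the computed points that lie in $\Delta$ — generically exactly those on components of $C(F)$ contained in $\Delta$ — leaves a finite set $S$ that is contained in $\mathcal{C}(F)$ and meets every connected component of it. If positive-dimensional components are anticipated, which is the generic situation since the algebraic medial axis is typically a hypersurface, one instead runs a cascade or regenerative cascade \cite{Cascade,RegenCascade} to build witness point sets, which are exactly what the subsequent reality test \cite{RealPoints} consumes when upgrading the bound to the exact value.

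With $S$ in hand, \Cref{thm:reach_and_lfs}(a) gives that $D$ is constant on each connected component of $\mathcal{C}(F)$, so $\{D(s):s\in S\}$ is exactly the finite set of values $D$ attains on $\mathcal{C}(F)$, which by the preceding observation contains $\tau_X^2$. Since $X$ is a compact smooth submanifold of $\mr^\numvars$, $\tau_X>0$ by Federer \cite{FedererReach}; thus $\tau_X^2$ is a positive real number in $\{D(s):s\in S\}$, the set $\{s\in S:D(s)\in\mr_{>0}\}$ is nonempty, and $\min_{s\in S,\,D(s)>0}\sqrt{D(s)}$, being the minimum of finitely many positive reals, is strictly positive and at most $\sqrt{\tau_X^2}=\tau_X$. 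This is \eqref{eq:ReachLowerBound}, and \eqref{eq:LFSLowerBound} follows identically, using that $\medial_X$ lies in the algebraic medial axis (by \Cref{def:alg_medial}) so that $D_w$ attains on $\mathcal{L}(F,w)$ a positive critical value no larger than $\lfs(w)^2$. Strictly smaller positive values may also appear among the $D(s)$ — coming from critical configurations that need not be real geometric bottlenecks — which is why one obtains only an inequality here and why the reality test is needed to pin down $\tau_X$ (resp.\ $\lfs(w)$).

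The step that carries all the geometric content — and the one I expect to require the most care — is the observation imported from before the statement: that the passage from closest-point conditions to the first-order-critical relaxation does not lose the reach, i.e.\ that $\tau_X^2$ really appears as a value of $D$ on $\mathcal{C}(F)$. This amounts to checking that the configuration realizing the reach — a genuine two-point bottleneck, or, when the reach is curvature-determined, a limiting configuration in which the two footpoints coincide — lies in the closure of a component of $C(F)$ whose generic point is off $\Delta$, so that it is not discarded, after which constancy of $D$ on that component forces the value $\tau_X^2$; the analogous point for (b) is that the minimum of $d(w,\cdot)$ over the algebraic medial axis, which is at most the corresponding minimum $\lfs(w)$ over $\medial_X$, is attained at a critical value of $D_w$ on $\mathcal{L}(F,w)$. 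Everything else is routine: the homotopy-continuation input is standard once the start system and parametrization are arranged so that the ``one point per connected component'' guarantee of \cite{CoeffParam} applies to the possibly positive-dimensional variety $C(F)$, and the final inequalities are immediate once $\{D(s):s\in S\}$ is identified with the full set of values of $D$ on $\mathcal{C}(F)$.
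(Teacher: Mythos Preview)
Your proposal is correct and follows essentially the same approach as the paper: the paper gives no separate proof of this corollary, treating it as a summary of the discussion immediately preceding it (constancy of $D$ on components from \Cref{thm:reach_and_lfs}, the assertion that the reach is a critical value of $D$ on $\mathcal{C}(F)$ attained at a real point, and the availability of homotopy methods to hit every connected component). Your write-up is in fact more careful than the paper on the one genuinely delicate point you flag in your last paragraph --- namely that the reach-realizing configuration really lands in $\mathcal{C}(F)$ rather than being lost in $\Delta$ when the reach is curvature-determined --- which the paper handles only with the word ``Clearly''.
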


This section concludes with some illustrative examples.

\begin{example}\label{ex:cassiniReach}
Consider computing the reach for the Cassini ovals with 2 and 3 foci from 
\Cref{ex:cassini_def}.  
We utilized a parameter homotopy in the corresponding space of multihomogeneous systems.
For the Cassini oval with 2 foci, 
the lower bound in \eqref{eq:ReachLowerBound} is approximately $0.6436$ which is attained at three different critical points computed by the homotopy.
As shown in \Cref{fig:Cassini3reach}(a), all three are real and thus 
the lower bound in \eqref{eq:ReachLowerBound} is equal to the reach.

For the Cassini oval with 3 foci, 
the lower bound in \eqref{eq:ReachLowerBound} is approximately $0.3611$.
Since this arises from nonreal isolated solutions to the critical point system,
this can easily be discarded as not being equal to the reach.
The next two smallest positive critical values are approximately $0.3674$ and 
$0.3868$ which also arise from nonreal isolated solutions to the critical point system
and thus can be discarded as not being equal to the reach.
Finally, the fourth smallest positive critical value is approximately $0.4298$
which does arise from real solutions to the critical point system and is thus
equal to the reach.  The reach attaining points are shown
in \Cref{fig:Cassini3reach}(b).

As remarked in \Cref{ex:cassini_def}, the minimal polynomial for the reach
of the Cassini oval with 3 foci is $343x^6-128x^3+8$.  Since the
critical reach correspondence is defined over the rational numbers,
each root of this minimal polynomial is also a critical value.
\Cref{fig:Cassini3reach}(c) shows the critical points 
associated with the other real root which is approximately $0.6648$.

\begin{figure}
    \centering
    \begin{tabular}{ccccc}
    \includegraphics[scale=0.4]{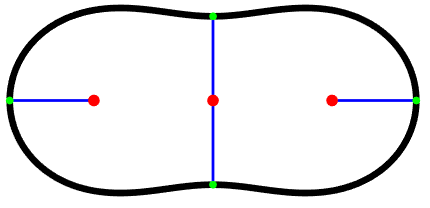} 
    & ~~~~ &
    \includegraphics[scale=0.4]{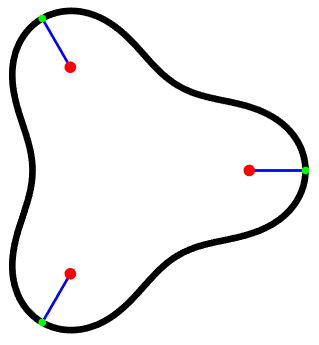} 
    & ~~~~ &
    \includegraphics[scale=0.4]{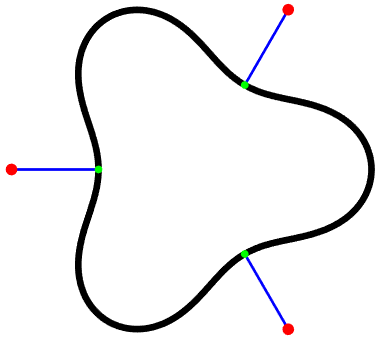} \\
    (a) & & (b) && (c)
    \end{tabular}
    \caption{Reach attaining points for the Cassini oval with (a) 2 foci and (b) 3 foci.
    (c)~Critical points arising from the algebraic closure of the reach for
    the Cassini oval with 3 foci.
    }
    \label{fig:Cassini3reach}
\end{figure}
\end{example}

\begin{example}\label{ex:UnitCircle}
The medial axis of the unit circle defined by $x_1^2+x_2^2=1$
is the origin and thus the reach of $1$ is attained at the origin.
However, this reach attaining point 
is not isolated with respect to the critical reach correspondence
since there are infinitely-many points 
on the unit circle where the reach is attained.
Hence, the corresponding points computed 
via homotopy continuation need not be real.
For example, using a multihomogeneous
homotopy, $1$ is the unique positive critical value
arising from $78$ distinct endpoints,
none of which correspond with real points on the unit circle.
Since $1$ is the only positive critical value, it is the reach.
\end{example}

\begin{example}\label{ex:TwoCircles}
The medial axis of the union of two concentric circles defined
by 
$$(x_1^2 + x_2^2 - 1)(x_1^2 + x_2^2 - 9)=0$$
is the union of the origin and the circle centered at the origin of radius $2$.
Thus, the reach is $1$ which is attained at every point on the medial axis
as shown in \Cref{fig:NestedCircles}.
For example, using a multihomogeneous
homotopy, the minimum positive critical value is $1$
which arises from 184 distinct endpoints.
Of these, 168 correspond with the origin
while the other 16 correspond with distinct points in $\CC^2$
satisfying $x_1^2+x_2^2=4$.  From \eqref{eq:ReachLowerBound}, 
this one homotopy shows that the reach is at least $1$.  
For this example, it is easy to verify that there exist real critical points
that yield a critical value of $1$ which shows that the reach is indeed equal to~$1$.

\begin{figure}
    \centering
    \includegraphics[scale=0.2]{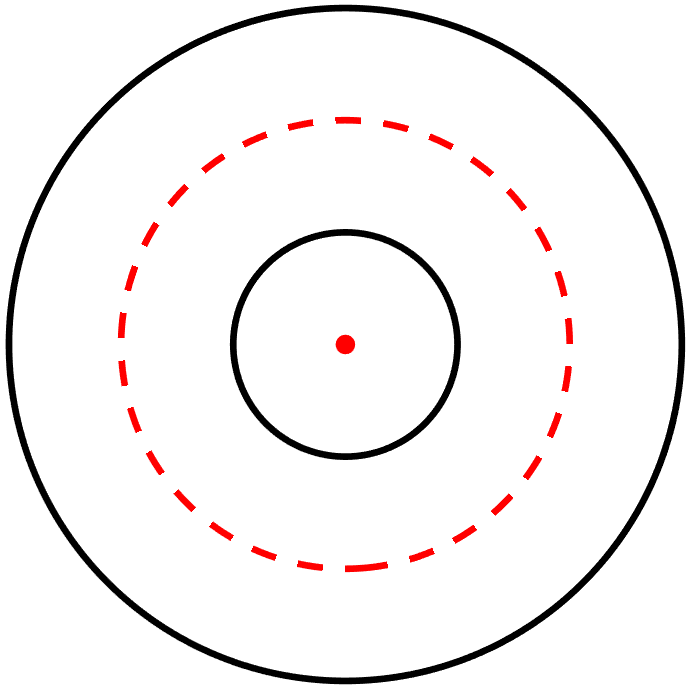}
    \caption{Reach obtained on a curve and a point for two concentric circles}
    \label{fig:NestedCircles}
\end{figure}
\end{example}

\section{Bottlenecks and weak feature size} 
\label{sec:wfs}

The following expands upon the definition of geometric bottlenecks from \Cref{def:wfs}
and considers successive approximations of the weak feature size using {\it higher order bottlenecks.}

\begin{definition}
\label{def:geom_bottleneck}
Let $X$ be a compact subset of $\mr^\numvars$. A geometric bottleneck $z$ in $\crit(d_X)$ has \emph{order} $k\geq 2$ if $z$ is a convex combination of $k$ affinely independent points in $\pi_X(z)$ and is not a convex combination of any fewer number of points in $\pi_X(z)$. We will often refer to such a point $z$ as a \emph{geometric k-bottleneck} of $X$. 
\end{definition}
\begin{remark}
\Cref{def:geom_bottleneck} resembles a generalization of the index of critical points of a Morse function introduced by Gershokovich and Rubinstein \cite{Gersh1997Min}.
The treatment by Bobrowski and Adler
renders this connection clearer for distance functions \cite[Def.~2.1]{Bobrowski2014Random}, albeit for the case where $X$ is a finite point set. A geometric $k$-bottleneck of $X$ is a critical point of $d_X$ with index $k-1$ using that terminology.  When $X$ is a finite set of points, this notion of index yields a decomposition similar to the classic cellular decomposition theorem of Morse theory (see, e.g., \cite[Thm.~3.5]{Milnor1963Morse} and \cite[\S4.2]{Bobrowski2014Random}). 
This does not extend to the case when $X$ is not finite. 
In particular, the Cassini oval with 2 foci in \Cref{ex:cassini_def} 
and the unit circle in \Cref{ex:UnitCircle} are both counter examples. 
We use the term \emph{order} rather than \emph{index} to clarify that a Morse-type result 
does not apply in our setting.
\end{remark}

Before considering the algebraic setting, the following highlights the relationship between
geometric $k$-bottlenecks and weak feature size from \Cref{def:wfs}.

\begin{proposition}\label{prop:fin_many_order}
If $X$ is a compact subset of $\mr^\numvars$, then every geometric bottleneck has order
at most $\numvars+1$ and
\[{\rm wfs}(X)=\inf_{z\text{ geom. }k\text{-bottleneck, }2\leq k\leq\numvars+1}d_X(z).\]
\end{proposition}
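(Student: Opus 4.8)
The plan is to reduce both assertions to Carathéodory's theorem in its affinely independent form: a point $z$ lying in the convex hull of a set $S\subseteq\mr^\numvars$ always admits a representation $z=\sum_{i=1}^{k}\lambda_i s_i$ with $s_i\in S$, all $\lambda_i>0$, $\sum_i\lambda_i=1$, and $s_1,\dots,s_k$ affinely independent, so that $k\leq\numvars+1$. First I would use this to make the notion of order unambiguous. Given a geometric bottleneck $z$, so $z$ lies in the convex hull of $\pi_X(z)$ by \Cref{def:wfs}, choose among all representations $z=\sum_{i=1}^{k}\lambda_i x_i$ with $x_i\in\pi_X(z)$, $\lambda_i>0$, $\sum_i\lambda_i=1$ one for which $k$ is minimal (such representations exist and the $x_i$ in a minimal one are automatically distinct). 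If the $x_i$ were affinely dependent, say $\sum_i\mu_i x_i=0$ with $\sum_i\mu_i=0$ and the $\mu_i$ not all zero, then $z=\sum_i(\lambda_i-t\mu_i)x_i$ for every $t$, with coefficients still summing to $1$; choosing $t=\lambda_j/\mu_j$ where $j$ minimizes $|\lambda_i/\mu_i|$ over the indices $i$ with $\mu_i$ of the appropriate sign keeps all coefficients nonnegative while making one of them vanish, producing a shorter representation and contradicting minimality of $k$. Hence the $x_i$ are affinely independent, so $k\leq\numvars+1$; and minimality of $k$ says exactly that $z$ is not a convex combination of fewer than $k$ points of $\pi_X(z)$. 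Thus $z$ is a geometric $k$-bottleneck in the sense of \Cref{def:geom_bottleneck}, and this $k$ is well defined.

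Next I would record that $k\geq 2$ always: if $k=1$ then $z=x_1\in\pi_X(z)\subseteq X$, contradicting $z\in\mr^\numvars\setminus X$. Combining the two bounds, every geometric bottleneck has a well-defined order $k$ with $2\leq k\leq\numvars+1$, which is the first claim of the proposition.

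For the identity, the two previous paragraphs show that
\[
\crit(d_X)=\bigcup_{k=2}^{\numvars+1}\{\, z\in\mr^\numvars \mid z \text{ is a geometric } k\text{-bottleneck of } X \,\}.
\]
Taking the infimum of $d_X$ over both sides, and using that the infimum of a function over a finite union of sets is the minimum of its infima over the individual sets (with the convention $\inf_\emptyset=+\infty$, which handles the convex case where all pieces are empty and $\wfs(X)=+\infty$), yields
\[
\wfs(X)=\inf_{z\in\crit(d_X)}d_X(z)=\min_{2\leq k\leq\numvars+1}\ \inf_{z\text{ geom. }k\text{-bottleneck}}d_X(z)=\inf_{\substack{z\text{ geom. }k\text{-bottleneck}\\ 2\leq k\leq\numvars+1}}d_X(z),
\]
which is the asserted formula.

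The only step that needs genuine care is the affine-dependence reduction, namely verifying that a minimal-length convex representation has affinely independent support and that the parameter $t$ can be selected so that no coefficient turns negative; this is precisely the inductive step in the standard proof of Carathéodory's theorem, so I do not anticipate a real obstacle — everything else is bookkeeping with infima over a finite union.
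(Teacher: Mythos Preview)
Your proposal is correct and takes essentially the same approach as the paper: both reduce the claim to Carath\'eodory's theorem. The paper's proof is a two-line invocation of Carath\'eodory, whereas you additionally unpack the affine-dependence reduction (effectively reproving Carath\'eodory), verify explicitly that a minimal-length convex representation is affinely independent so that the order is well defined, check the lower bound $k\geq 2$, and spell out the passage from the decomposition of $\crit(d_X)$ to the $\wfs$ formula---all of which the paper leaves implicit.
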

\begin{proof}
Suppose that $z$ is a geometric bottleneck of $X$.
Then, by definition, $z$ is in the convex hull of $\pi_X(z)$. 
By Carath\'{e}odory's Theorem \cite{caratheodory1911}, $z$ is a convex combination of at most $\numvars+1$ points in $\pi_X(z)$
which shows that the order of $z$ is at most $\numvars + 1$.
\end{proof}

\begin{remark}
From \Cref{prop:fin_many_order}, it is natural to ask, for algebraic manifolds, if 
one must use all possible orders of geometric bottlenecks to determine the weak feature size
or if one could use less, e.g., use only geometric $2$-bottlenecks.
The Cassini oval with 3 foci in \Cref{ex:cassini_def} lies in $\mr^2$
and has no geometric $2$-bottlenecks.  In particular, the
weak feature size is attained at the origin, which is a geometric bottleneck of maximal order $3$.
Similar Cassini oval constructions generalize to higher dimensions
and also generalize \cite[Ex 3.4]{di2020sampling}.
\end{remark}

Following a similar approach as in Section~\ref{sec:lfs_compute}, one can relax the conditions 
of a geometric $k$-bottleneck to obtain algebraic conditions amenable to computational algebraic
geometry over $\CC$.  As before, we assume that $X=V(F)\cap\mr^\numvars$ is
nonempty and compact where $F$ is a polynomial system with real coefficients.

\begin{definition}\label{def:locus}
Let $F = \{f_1,\dots,f_m\}$ be a system of polynomials in $\numvars$ variables 
with real coefficients such that $V(F)$ is equidimensional and smooth of codimension $c$
and $k\geq 2$.  The \emph{$k^{\rm th}$ bottleneck correspondence} of $F$, denoted $B_k(F)$, is
\[ \left\{(x_1,\dots,x_k,t_1,\dots,t_k)\in \left(\mc^{\numvars}\right)^k \times \mc^k~\left| \begin{array}{l}
 \begin{array}{l} \sum_{i=1}^k t_i = 1, \end{array} \\
 \begin{array}{l} z = \sum_{i=1}^k t_ix_i, \end{array}\\ 
 \begin{array}{l} d(x_1, z)^2 = d(x_j, z)^2 \  \text{ for } j = 2,\dots,k \end{array}\\
\begin{array}{ll}
F(x_i) = 0 & \multirow{2}{*}{\text{ for } $i = 1,\dots,k$.} \\ 
\text{rank}[x_i-z \ JF(x_i)^T]\leq c
\end{array}
\end{array}\right\}\right..
\] 
Let $\Gamma_k\subset \left(\mc^\numvars\right)^k \times \mc^k$ consist of all points
$(x_1,\dots,x_k,t_1,\dots,t_k)$ where any $t_i$ is $0$ or the set $\{x_1,\dots,x_k\}$ is affinely dependent.  Consider the map $\rho_k:\left(\mc^\numvars\right)^k\times\mc^k\to\mc^\numvars$ defined by $\rho_k(x_1,\dots,x_k,t_1,\dots,t_k)=\sum_{i=1}^k t_ix_i$.
A point $z\in\CC^\numvars$ is an \emph{algebraic $k$-bottleneck} of $V(F)$ if $z \in \rho_k(B_k(F) \setminus \Gamma_k)$.  A \emph{real algebraic $k$-bottleneck} of $V(F)$ is a point in $\mr^\numvars$ 
which is an algebraic $k$-bottleneck.  Let $X = V(F)\cap\mr^\numvars$
and $R_{X,k} = X^k \times (0,1)^k \subset \left(\mr^\numvars\right)^k \times \mr^k$.
A \emph{real algebraic $k$-bottleneck} of $X$ is a point in $\mr^\numvars$
in the image of 
$\rho_k( (B_k(F) \cap R_{X,k}) \setminus \Gamma_k)$.
\end{definition}

\begin{remark}
Following the notation of \Cref{def:locus}, every geometric $k$-bottleneck of $X$
is a real algebraic $k$-bottleneck of $X$.  In particular, one has the following relationship:
\[
\begin{array}{c}
{\small 
\{\text{geometric $k$-bottlenecks of } X\} \subseteq \{\text{real algebraic $k$-bottlenecks of } X\}
\subseteq } \\
{\small \{\text{real algebraic $k$-bottlenecks of } V(F)\} \subseteq \{\text{algebraic $k$-bottlenecks of } V(F)\} = \rho(B_k(F)\setminus \Gamma_k).}
\end{array}
\]
Typically, these inclusions are strict as the examples in \Cref{sec:examples} exhibit.
In particular, for the second inclusion, 
it is possible for the image of $\rho_k$ to be real for~nonreal~input.
\end{remark}

\begin{example}\label{ex:ellipsoidPerturb}
Consider computing the algebraic $2$-bottlenecks for 
the perturbed ellipsoid $X\subseteq\mr^2$ 
from the Introduction defined by \mbox{$F = x_1^2 + x_2^2 + x_3^2/2 + x_1x_3/7 - 1$}.
The set $B_2(F)\setminus\Gamma_2$ consists of three points up to symmetry, which are depicted in \Cref{fig:ellipsoid2}
with the black segments corresponding to the geometric $2$-bottleneck of~$X$
while the blue segments correspond with real algebraic $2$-bottlenecks of~$X$
that are not geometric $2$-bottlenecks of $X$.  
\end{example}

\subsection{Critical values}

The following exhibits a result similar to \Cref{thm:reach_and_lfs}.

\begin{theorem}\label{thm:fin_many_crit_val}
Let $F$ be a polynomial system such that $V(F)$ is smooth and equidimensional of
codimension $c$.  Let $k\geq 2$ and $D_k:B_k(F)\to\mc$ be defined by
$$(x_1,\dots,x_k,t_1,\dots,t_k)\mapsto \frac{1}{k}\sum_{i=1}^k d\left(x_i,\rho_k(x_1,\dots,x_k,t_1,\dots,t_k)\right)^2.$$
Then, $D_k$ is constant on every connected component of $B_k(F)\setminus\Gamma_k$.
\end{theorem}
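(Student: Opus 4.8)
The plan is to mimic the proof strategy of \Cref{thm:reach_and_lfs}, which reduces the claim to an application of the algebraic form of Sard's theorem together with a gluing argument over connected components. First I would observe that $B_k(F)$ is an algebraic subset of $\left(\mc^\numvars\right)^k \times \mc^k$, so it has finitely many irreducible components, and that $\Gamma_k$ is a proper closed subset. The core reduction is therefore to show that $D_k$ is constant on every \emph{irreducible} component $W$ of $B_k(F)$ whose intersection with $B_k(F)\setminus\Gamma_k$ is nonempty (equivalently, that is not contained in $\Gamma_k$). Once that is established, the passage from irreducible components to connected components is the same formal argument used in \Cref{thm:reach_and_lfs}: connected components of an algebraic set are unions of irreducible components, any irreducible component meeting another shares a point where $D_k$ agrees, so $D_k$ is locally constant hence constant on each connected component of $B_k(F)\setminus\Gamma_k$ (using that these connected pieces are, up to the removed locus, built from the irreducible components' constant values chained together).

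For the key step — constancy on an irreducible component $W$ not contained in $\Gamma_k$ — the idea is that on the open dense subset $W\setminus\Gamma_k$ the defining conditions of $B_k(F)$ are precisely first-order criticality conditions for the squared distances $d(x_i,z)^2$. Concretely, on a point of $B_k(F)\setminus\Gamma_k$ the $x_i$ lie on the smooth variety $V(F)$, the rank condition $\text{rank}[x_i - z\ JF(x_i)^T]\leq c$ says that $x_i - z$ is normal to $V(F)$ at $x_i$, i.e. $x_i$ is a critical point of the distance from $z$ restricted to $V(F)$, and the equidistance conditions together with $z = \sum t_i x_i$, $\sum t_i = 1$ encode the bottleneck relations. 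One then computes the differential of $D_k$ along $W$: differentiating $\frac1k\sum_i d(x_i,z)^2$ and using that $z$ depends on the $x_i$ and $t_i$ through $\rho_k$, the terms coming from varying $z$ cancel because $\sum_i t_i(x_i - z) = z - z = 0$ (each $x_i - z$ being, moreover, normal so killed against tangent variations of $x_i$ on $V(F)$), and the terms from varying $x_i$ along the normal direction vanish against the tangent space $T_{x_i}V(F)$. The cleanest way to package this, following the cited reference \cite[Thm~A.4.10]{somnag}, is to exhibit $D_k$ restricted to $W$ as the composition with a suitable "critical value" map and invoke the algebraic Sard theorem: a dominant morphism of irreducible varieties has generically smooth fibers, and the critical-locus structure forces the relevant map to $\mc$ to be non-dominant, hence to have image a single point (a finite set, and irreducibility of $W$ pins it to one value).

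The main obstacle I anticipate is handling the bookkeeping around the auxiliary parameters $t_1,\dots,t_k$ and the parametrization $z = \rho_k(x_1,\dots,x_k,t_1,\dots,t_k)$ carefully enough that the cancellation "$\sum_i t_i(x_i-z)=0$" genuinely makes the $z$-variation terms drop out of $dD_k$, rather than just formally. In the $k=2$ reach/lfs case of \Cref{thm:reach_and_lfs} the geometry is simpler because $z$ is an independent variable constrained by equations; here $z$ is eliminated in favor of a convex combination, so one must verify that the induced conditions on $(x_1,\dots,x_k,t_1,\dots,t_k)$ still amount to $D_k$ being a critical value in the sense that triggers Sard. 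A secondary technical point is to confirm that removing $\Gamma_k$ — where some $t_i = 0$ or the $x_i$ become affinely dependent — does not disconnect a component in a way that breaks the chaining argument; but since $\Gamma_k$ is closed and we only assert constancy on connected components of the open set $B_k(F)\setminus\Gamma_k$, this is a routine point-set argument. I would present the $z$-elimination explicitly at the start, then run the differential computation, then cite the algebraic Sard theorem, then close with the irreducible-to-connected component gluing exactly as in \Cref{thm:reach_and_lfs}.
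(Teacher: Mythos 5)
Your high-level strategy matches the paper's: reduce to irreducible components not contained in $\Gamma_k$, establish that $D_k$ is critical there, invoke the algebraic Sard theorem to conclude non-dominance and hence constancy, and then glue from irreducible to connected components exactly as in \Cref{thm:reach_and_lfs}. The ingredients you name (normality of $x_i - z$, the relation $\sum_i t_i(x_i-z)=0$, the equidistance conditions) are indeed the ones the paper uses. The difference is in how the criticality is actually verified: the paper introduces the auxiliary variety $A_k(F)\subset(\mc^n)^k\times\mc^n$ where $z$ is an independent variable constrained only by $F(x_i)=0$ and the equidistance equations, pushes $B_k(F)$ into $A_k(F)$ via $z=\sum t_ix_i$, computes the codimension of $A_k(F)$ to be $kc+k-1$, and then shows criticality of $D_k'=\sum_i d(x_i,z)^2$ by a rank bound on an explicit $(km+k)\times(kn+n)$ Jacobian-plus-gradient matrix. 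The key geometric fact that $z-x_1$ lies in the span of $x_2-x_1,\dots,x_k-x_1$ (your "$z$ is in the affine hull" observation) enters at the last step of that rank count. You gesture at this via the chain rule but do not carry it out.

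The concrete gap is in your claimed cancellation. Differentiating $D_k=\frac{1}{k}\sum_i\|x_i-z\|^2$ with $z=\rho_k(\dots)$, the term produced by varying $z$ is proportional to $\sum_i(x_i-z)^T\,\delta z$, not $\sum_i t_i(x_i-z)^T\,\delta z$, and the former does not vanish merely because $z$ is the $t$-weighted barycenter. To salvage the direct calculation one must bring in the equidistance constraints: they say $D_k=\|x_1-z\|^2$ outright, and their tangent conditions (combined with the normality $(x_i-z)\perp T_{x_i}V(F)$) force all the inner products $(x_i-z)^T\delta z$ to agree; only then does $\sum_i t_i(x_i-z)=0$ with $\sum_i t_i=1$ force them all to vanish. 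You mention equidistance as one of the "bottleneck relations" but do not use it at the moment the cancellation is claimed, so the sketch as written does not close. The paper's detour through $A_k(F)$ is precisely a way to encode all of these constraints uniformly in a single rank computation rather than chasing them through a differential by hand, and that is where your proposal stops short of a proof.
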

\begin{proof}
Similarly to \Cref{thm:reach_and_lfs}, it suffices to show 
constancy for an irreducible component~$C$ 
not contained in $B_k(F)\setminus \Gamma_k$.

The following shows that every point in $B_k(F)\setminus\Gamma_k$
is a critical point of $D_k$. Since $D_k\vert_{C}$ is an algebraic map of irreducible quasiprojective algebraic sets, it follows by the algebraic version of Sard's Theorem, e.g.,~\cite[Thm~A.4.10]{somnag}, 
that $D_k$ is not dominant, and therefore $D_k(C)$ is a single point because otherwise~$C$ is 
not irreducible.  Note that the critical points of $D_k$ are the same as those of $k\cdot D_k$
so we will consider $k\cdot D_k$ for simplicity.  

Let $A_k(F)\subset\left(\mc^\numvars\right)^k\times\mc^n$ be 
the set of $(x_1,\dots,x_k,z)$ satisfying
\[
 \begin{array}{l}
  F(x_i) = 0 \ \text{ for } i = 1,\dots,k, \\
  d(x_1, z)^2 = d(x_j, z)^2 \  \text{ for } 2 \leq j \leq k.
\end{array}
\] 
Clearly, there is an inclusion map $i:B_k(F)\to A_k(F)$ 
given by \[ (x_1,\dots,x_k,t_1,\dots,t_k)\mapsto \left(x_1,\dots,x_k,\sum_{i=1}^k t_i x_i\right).\] 
By the chain rule, we need only prove that any point in the image of $i$ is a critical point of
the map
$D_k':A_k(F)\to\mc$ defined by $(x_1,x_2,\dots,x_k,z) \mapsto \sum_{i=1}^k d(x_i,z)^2$. 
Since~$V(F)$ is smooth and equidimensional, one may check directly that $A_k(F)$ has codimension $kc + k - 1$. By elementary row operations, one reduces the problem to showing that $(x_1,\dots,x_k,z)$ is a critical point of $D_k'$ 
if the $(km + k)\times (k\numvars + \numvars)$ matrix 

\[
\begin{pmatrix} 
JF(x_1) & 0 & 0 & 0 & 0 & \dots & 0 \\
0 & JF(x_2) & 0 & 0 & 0 & \dots & 0 \\ 
  &         &   & \vdots &   & &  \\
0 & 0       & 0 & \dots & 0 & JF(x_k) & 0 \\ 
0 & -(x_2 - z)^T & 0 & \dots & 0 & 0  & (x_2 - x_1)^T \\ 
0 & 0 & -(x_3 - z)^T & 0 & \dots & 0 &  (x_3 - x_1)^T \\ 
 &  & & \vdots &  &  & \\
0 & 0 & 0 & \dots & 0 & -(x_k-z)^T &  (x_k-x_1)^T \\ 
(x_1 - z)^T & 0 & 0 & 0 & \dots & 0 & (z-x_1)^T
\end{pmatrix}
\]
has rank at most $kc + k - 1$
where $m$ is the number of polynomials in $F$.
Suppose that \mbox{$(x_1,\dots,x_k,z)$} is in the image of the inclusion map $i$.
Then, the first $k\numvars$ columns of this matrix contribute at most $kc$ to the dimension of the column space and the final~$\numvars$ columns contribute at most by $k-1$ since $x_2-x_1,\dots,x_k-x_1$ span the affine hull of $x_1,\dots,x_k$ and $z-x_1$ is in that affine hull. 
Altogether the rank of the matrix is at most $kc + k - 1$.  
\end{proof}

\begin{remark}
For $k = 2$, this proof shows that the algebraic $2$-bottlenecks of $V(F)$ correspond
with a subset of the Zariski closure $C(F)\setminus \Delta$ where $C(F)$ is the critical reach correspondence.  In contrast to $C(F)$, however, the correspondence $B_2(F)$ does not contain functions corresponding to the gradients of $\rank$ conditions.
This is illustrated in 
\Cref{fig:Cassini3reach}(a) for the Cassini oval with $2$ foci.
\end{remark}

As with \Cref{thm:reach_and_lfs} yielding \Cref{cor:reach_and_lfs_hom}, 
\Cref{thm:fin_many_crit_val} provides the following.

\begin{corollary}\label{cor:wfs_pd_cor}
Let $F$ be a polynomial system in $\numvars$ variables with real coefficients such that $V(F)$ 
is smooth and equidimensional of codimension $c$
and $X=V(F)\cap\mr^\numvars$ is nonempty and compact.  
For each $k=2,\dots,\numvars+1$, 
one can use a parameter homotopy~\cite{CoeffParam} to compute a finite set of points $E_k$
    which contains at least one point in each connected component of $B_k(F)\setminus\Gamma_k$.  Then,
\begin{equation}\label{eq:wfs_lower_bound}
    0< \min_{k=2,\dots,\numvars+1}\left(\min_{e\in E_k \text{~with~} D_k(e) > 0} \sqrt{D_k(e)}\right) \leq \wfs(X).
    \end{equation}
\end{corollary}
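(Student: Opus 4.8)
The plan is to deduce \Cref{cor:wfs_pd_cor} from \Cref{thm:fin_many_crit_val} in the same way \Cref{cor:reach_and_lfs_hom} follows from \Cref{thm:reach_and_lfs}, additionally invoking \Cref{prop:fin_many_order} and the chain of inclusions between geometric, real algebraic, and algebraic $k$-bottlenecks recorded after \Cref{def:locus}. There are two things to establish: that each $E_k$ can be computed as claimed, and that the bound \eqref{eq:wfs_lower_bound} holds.

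For the computability of $E_k$, I would observe that $B_k(F)$ is a complex algebraic set whose defining equations depend polynomially on the coefficients of $F$; as in the discussion preceding \Cref{cor:reach_and_lfs_hom}, a coefficient--parameter homotopy~\cite{CoeffParam}, supplemented by the usual numerical machinery (regeneration or a (regenerative) cascade~\cite{Cascade,RegenCascade}) when positive-dimensional components occur, yields a finite set $E_k$ meeting every irreducible component of the Zariski closure of $B_k(F)\setminus\Gamma_k$, hence every connected component of $B_k(F)\setminus\Gamma_k$, each such component being a union of irreducible components. Since $D_k$ is algebraic and, by \Cref{thm:fin_many_crit_val}, constant on each connected component of $B_k(F)\setminus\Gamma_k$, evaluating $D_k$ on $E_k$ recovers every value $D_k$ attains on $B_k(F)\setminus\Gamma_k$. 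Positivity in \eqref{eq:wfs_lower_bound} is then immediate: each $E_k$ is finite, so for each $k$ the set $\{D_k(e)\mid e\in E_k,\ D_k(e)>0\}$ is a finite set of positive reals with positive minimum, and at least one of these sets is nonempty provided $X$ has a geometric bottleneck (if it does not, $\wfs(X)=\infty$ and nothing more need be shown).

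The substantive step is the inequality in \eqref{eq:wfs_lower_bound}, which I would argue as follows. By \Cref{prop:fin_many_order} it suffices to show, for every geometric $k$-bottleneck $z$ of $X$ with $2\le k\le\numvars+1$, that the left-hand side of \eqref{eq:wfs_lower_bound} is at most $d_X(z)$. Write $z=\sum_{i=1}^k t_ix_i$ with $x_i\in\pi_X(z)$, all $t_i>0$, $\sum_i t_i=1$, and $\{x_1,\dots,x_k\}$ affinely independent. Then $F(x_i)=0$ as $x_i\in X\subseteq V(F)$; each $x_i$ realizes $d_X(z)$, so $d(x_1,z)^2=d(x_j,z)^2$ for all $j$; and since $x_i$ minimizes distance to $z$ on the smooth manifold $V(F)\cap\mr^\numvars$, the vector $x_i-z$ lies in the normal space at $x_i$, i.e., $\rank[x_i-z\ \ JF(x_i)^T]\le c$, exactly as noted after \Cref{def:alg_medial}. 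Hence $(x_1,\dots,x_k,t_1,\dots,t_k)\in B_k(F)$, and since every $t_i\ne 0$ and the $x_i$ are affinely independent it lies in $B_k(F)\setminus\Gamma_k$, say in the connected component $\mathcal{K}$. The set $E_k$ contains some $e\in\mathcal{K}$, and by \Cref{thm:fin_many_crit_val},
\[
D_k(e)=D_k(x_1,\dots,x_k,t_1,\dots,t_k)=\tfrac1k\sum_{i=1}^k d(x_i,z)^2=d_X(z)^2,
\]
which is strictly positive since $z\in\crit(d_X)$ forces $z\notin X$ and $X$ is compact. Thus $e$ contributes to the inner minimum, $\sqrt{D_k(e)}=d_X(z)$, and the left-hand side of \eqref{eq:wfs_lower_bound} is at most $d_X(z)$. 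Taking the infimum over all geometric bottlenecks and applying \Cref{prop:fin_many_order} finishes the proof.

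I do not expect a genuine obstacle, since the corollary essentially repackages \Cref{thm:fin_many_crit_val}; the step needing the most care is the verification above that the tuple built from $z$ lies in $B_k(F)\setminus\Gamma_k$ itself, so that the constancy of $D_k$ applies on the component containing it, together with the fact that an $E_k$ assembled from witness points of irreducible components still meets that connected component. I would also state explicitly, as the surrounding text does, that ``using a parameter homotopy'' abbreviates the broader family of homotopy-continuation techniques capable of producing at least one point in each connected component.
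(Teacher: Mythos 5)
Your proposal is correct and follows essentially the same route as the paper, which compresses the argument to a one-line remark that \Cref{thm:fin_many_crit_val} yields \Cref{cor:wfs_pd_cor} just as \Cref{thm:reach_and_lfs} yields \Cref{cor:reach_and_lfs_hom}. You fill in precisely the steps the paper leaves implicit: building the tuple $(x_1,\dots,x_k,t_1,\dots,t_k)\in B_k(F)\setminus\Gamma_k$ from a geometric $k$-bottleneck via the first-order criticality of $d(\,\cdot\,,z)^2$ at each $x_i\in\pi_X(z)$, invoking the constancy of $D_k$ on connected components to transfer the value $d_X(z)^2$ to the witness point in $E_k$, and closing with \Cref{prop:fin_many_order} — so this is the intended proof made explicit, not a different one.
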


\begin{remark}
As with \Cref{cor:reach_and_lfs_hom}, an additional reality test, e.g., \cite{RealPoints},
can be used to determine the weak feature size.  When $\bigcup_{k=2}^\numvars \left(B_k(F)\setminus\Gamma_k\right)$
is finite, reality testing simply decides the reality of each critical point.
\end{remark}

\begin{remark}
Compactness may be removed as a requirement in \Cref{thm:fin_many_crit_val} and \Cref{cor:wfs_pd_cor}, but some care is necessary when $X = V(F)\cap\mr^\numvars$ is not compact. 
As an illustration, consider $F = x^2y^2-1$ with $X$ shown in \Cref{fig:hyperbola}.
Then, \Cref{thm:fin_many_crit_val} shows that the weak feature size of $X = V(F)\cap\mr^2$ inside any closed Euclidean ball of finite radius centered at the origin that intersects $X$ in~$\mathbb{R}^2$ must be positive.  By an explicit computation, one can see that the only contributors to the weak feature size in $B_2(F)\setminus\Gamma_2$ are  isolated solutions as shown in \Cref{fig:hyperbola}. 
The subtlety is that the manifold $V(F)\cap\mr^2$ is not homotopy equivalent to any of its thickenings and thus it is not an absolute neighborhood retract.
Therefore, \Cref{thm:basic_props} does not apply. 

\begin{figure}
    \centering
    \includegraphics[scale=0.22]{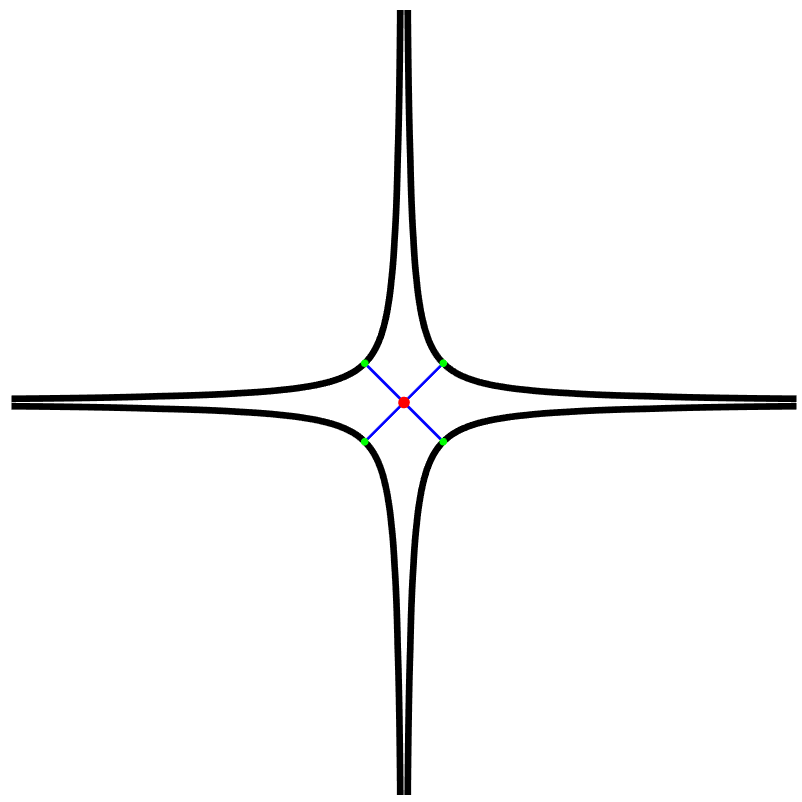}
    \caption{Real solutions of $x^2y^2=1$ with its geometric bottlenecks}\label{fig:hyperbola}
\end{figure}
\end{remark}

\begin{example}\label{ex:ellipsoid}
Consider the ellipsoid $X\subseteq\mr^2$ defined by $F = x_1^2 + x_2^2 + x_3^2/2 - 1$ from the Introduction
that is depicted in \Cref{fig:ellipsoid}.
We consider computing the algebraic $2$-bottlenecks using 
\Cref{cor:wfs_pd_cor} in two different ways: using a parameter homotopy
in the corresponding space of multihomogeneous systems
and using a parameter homotopy from the perturbed ellipsoid
computed in \Cref{ex:ellipsoidPerturb}.  

For the first approach, one obtains $6$ points in $B_2(F)\setminus\Gamma_2$.
Two of these points are real and equal up to symmetry. 
They correspond with the
blue segments connecting the magenta points in \Cref{fig:ellipsoid}
at a distance of $\sqrt{2}$ from the origin, i.e., $D_2 = 2$.
The other 4 points are nonreal and have $D_2 = 1$.  
These lie on a positive-dimensional component arising from antipodal points
on the unit circle in the $(x_1,x_2)$-plane whose real points are shown in \Cref{fig:ellipsoid}.
For this example, it is easy to verify that there exist 
real points on this component which are also geometric $2$-bottlenecks.

For the second approach, we can consider the family of algebraic $2$-bottlenecks
for 
$$F_t = x_1^2+x_2^2+x_3^2/2+tx_1x_2-1.$$ \Cref{ex:ellipsoidPerturb}
shows that, at the generic parameter value $t = 1/7$, $V(F_t)$ has three
algebraic $2$-bottlenecks.  We then used a parameter homotopy to track these three solutions along
the sufficiently general path defined by
$$t(s) = \frac{1}{7}\cdot \frac{\gamma s}{1-s+\gamma s} \text{~~where~~} \gamma = 2 + 3\sqrt{-1}$$
as $s$ goes from $1$ to $0$.
This yielded three real solutions which lie along the three coordinate axes.
The two lying along the $x_1$ and $x_2$ coordinate axes have $D_2=1$
while the third that lies along the $x_3$ coordinate axis has $D_2=2$.
\end{example}

\subsection{Critical points}
The remainder of this section considers the finiteness 
of algebraic $k$-bottlenecks
for general complete intersections of codimension $c$ in $\CC^\numvars$.
Of course, we naturally assume that $\numvars\geq 1$ and $1\leq c\leq \numvars$.
Let $(d_1,\dots,d_c) \in \NN^c$ and 
consider $P_i=\PP^{N_i}$ where $N_i=\binom{d_i+\numvars}{d_i}-1$ which is the parameter
space of hypersurfaces in $\CC^\numvars$ of degree at most $d_i$. 
Furthermore, complete
intersections in $\CC^\numvars$ of codimension $c$ and degree type
$(d_1,\dots,d_c)$ are parameterized by an open subset $U \subseteq
\prod_{i=1}^c P_i$. 
Let $X_u$ denote the complete
intersection in~$\CC^\numvars$ corresponding to $u \in U$ 
and $F_u$ be the system of $c$ polynomials in $\numvars$ variables that defines~$X_u$.

\begin{definition}\label{def:correspondence}
For $k\geq 2$, the \emph{k-bottleneck correspondence} for degree pattern $(d_1,\dots,d_c)$,
denoted $S_k$, is the set of points $(u, x_1, \dots, x_k, t_1,\dots,t_k) \in U\times \left( \CC^{\numvars}\right)^k \times \CC^k$ such that $$(x_1,\dots,x_k,t_1,\dots,t_k) \in B_k(F_u).$$
\end{definition}

We will analyze $S_k$ via projections onto its factors. 
In particular, let $\pi:S_k\to\left(\CC^\numvars\right)^k \times \CC^k$ and $\eta:S_k\to U$ be the projection maps. For any $u\in U$, the fiber $\eta^{-1}(u)$ is $\{u\}\times B_k(F_u)$.

The following provides a finiteness condition for algebraic $k$-bottlenecks.
Since geometric $k$-bottlenecks are algebraic $k$-bottlenecks, this immediately
implies a finiteness condition for geometric $k$-bottlenecks as well.

\begin{theorem}
Let $k\geq 2$ and $(d_1,\ldots, d_c)\in\NN^c$ such that each $d_i\neq 2$.
For general $u\in U$, the set of algebraic $k$-bottlenecks for $V(F_u)$ is finite.
In particular, for general $u \in U$, $\eta^{-1}(u) \setminus \Gamma_k$ is finite.
\label{thm:finitely_many_bottlenecks}
\end{theorem}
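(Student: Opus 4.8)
The plan is to exploit the parametrized correspondence $S_k$ and a standard ``generic fiber dimension'' argument: if I can show that the projection $\eta\colon S_k \to U$ restricted to the part of $S_k$ lying outside $\Gamma_k$ (and outside loci where some $x_i$ repeats or some rank condition degenerates) has image of full dimension $\dim U$ with generic fiber of dimension $0$, then for general $u$ the fiber $\eta^{-1}(u)\setminus\Gamma_k$ is finite, which is exactly the claim. So first I would fix attention on the open subset $S_k^\circ \subseteq S_k$ where $t_i\neq 0$ for all $i$, the points $x_1,\dots,x_k$ are distinct, affinely independent, and each $x_i$ is a smooth point of $X_u$; by \Cref{thm:fin_many_crit_val} and the remark following it the bottleneck conditions are ``first-order critical'' conditions, and I would want to identify an irreducible component (or a union of components) of $S_k^\circ$ that dominates the relevant data.

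The heart of the argument is a dimension count on $S_k^\circ$ done \emph{via the other projection} $\pi\colon S_k^\circ \to (\CC^\numvars)^k\times\CC^k$, together with the interpolation input. Fix the tuple $(x_1,\dots,x_k,t_1,\dots,t_k)$ with the $x_i$ distinct and affinely independent and $z=\sum t_i x_i$; the fiber $\pi^{-1}$ of this tuple inside $S_k$ is the set of $u\in U$ for which all $k$ points lie on $X_u$ \emph{and} at each $x_i$ the normal space of $X_u$ contains the direction $x_i - z$. Passing through each hypersurface of degree $d_i$ in the complete intersection, vanishing at a prescribed point imposes one linear condition and prescribing the gradient direction there imposes $\numvars - 1$ further linear conditions, i.e. $\numvars$ conditions per point per hypersurface — but this is precisely where $d_i \neq 2$ matters: by the Alexander--Hirschowitz Theorem these $k\numvars$ linear conditions on the space $\PP^{N_i}$ of degree-$d_i$ forms are \emph{independent} (the relevant secant variety of the Veronese $v_{d_i}(\PP^{\numvars-1})$, equivalently the double-point scheme, has the expected dimension), so the fiber of $\pi$ has the expected codimension $k\numvars$ in $U$ for each $i$, hence the expected codimension $\sum_{i=1}^c k\numvars = kc\numvars$ summed over the $c$ hypersurfaces — wait, more carefully, codimension $k\numvars$ inside $\prod_i \PP^{N_i}$ coming from $k$ point-plus-direction conditions distributed appropriately; I would track the bookkeeping so that $\dim \pi^{-1}(\text{tuple}) = \dim U - k\numvars$ generically on the image. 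Since the base of $\pi$, namely the locus of valid tuples $(x_1,\dots,x_k,t_1,\dots,t_k)$, has dimension $k\numvars + (k-1)$ (with the $t_i$ constrained by $\sum t_i = 1$), adding the fiber dimension gives
\[
\dim S_k^\circ \;=\; \bigl(k\numvars + k - 1\bigr) \;+\; \bigl(\dim U - k\numvars\bigr) \;=\; \dim U + (k-1).
\]
But then I must reconcile this with the map $\eta$: a generic fiber of $\eta$ over its image has dimension $\dim S_k^\circ - \dim U = k-1 > 0$, which looks like it contradicts finiteness. The resolution — and this is the genuinely delicate step — is that the extra $k-1$ dimensions are exactly absorbed by the overcounting inherent in the description: the equidistance equations $d(x_1,z)^2 = d(x_j,z)^2$ have \emph{not} yet been used in the fiber count above, and imposing those $k-1$ equations cuts $S_k^\circ$ down by $k-1$, restoring $\dim = \dim U$ and hence generic $0$-dimensional $\eta$-fibers. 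So the careful version is: count $\dim$ of the locus satisfying all of (interpolation at $x_i$), (gradient direction at $x_i$), \emph{and} (the $k-1$ equidistance relations), via $\pi$, and confirm it equals $\dim U$.

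The main obstacle I anticipate is making the Alexander--Hirschowitz input do precisely the right work. AH governs generic double (``fat'') points, i.e. vanishing of a form together with \emph{all} first partials at prescribed points, which is $\numvars$ conditions per point in $\PP^{\numvars-1}$-projectivized-tangent language — but here I only need the gradient to point in a \emph{prescribed direction}, not to vanish, which is $\numvars-1$ conditions, plus the value condition, so $\numvars$ in total, and I need to argue this variant imposes independent conditions on degree-$d_i$ forms exactly when AH says the double-point scheme is non-special. I would phrase this as: the conditions ``$f(x_i)=0$ and $\nabla f(x_i) \parallel (x_i-z)$'' at $k$ general points cut out a linear subspace of $\PP^{N_i}$ of the expected codimension, and deduce this from the $d_i\neq 2$ case of Alexander--Hirschowitz applied to $v_{d_i}(\PP^{\numvars-1})$ (with the classical exceptional cases $d_i=2$, and the sporadic quartic/cubic exceptions, all excluded or handled by the general-position choice of $k$ points and the hypothesis $d_i\neq 2$). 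A secondary obstacle is ensuring that I am working on an irreducible component of $S_k$ that actually \emph{dominates} $U$ — i.e. that the bottleneck correspondence is non-empty over the general $u$ in the first place, or, if it can be empty, that the statement ``$\eta^{-1}(u)\setminus\Gamma_k$ is finite'' is then vacuously fine — and that no spurious positive-dimensional component supported over $\Gamma_k$ or over a proper subvariety of $U$ interferes. I expect this to parallel the $k=2$ treatment in \cite{di2020sampling} closely, extended to general $k$ by exactly the dimension bookkeeping sketched above, with Alexander--Hirschowitz replacing whatever ad hoc genericity was used there.
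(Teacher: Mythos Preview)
Your approach is essentially the paper's: compute $\dim(S_k \setminus \pi^{-1}(\Gamma_k))$ via the projection $\pi$, show it equals $\dim U$, and conclude that generic $\eta$-fibers are finite. The two places where you hesitate are exactly where the paper's argument is sharper, so let me indicate how it handles them.

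For the Alexander--Hirschowitz step, the paper does not attempt to impose ``value plus gradient-direction'' conditions directly. Instead it uses AH (transported via the $\mathrm{PGL}$ action, since any affinely independent $x_1,\dots,x_k$ can be moved to generic points, and $k\le n+1$ outside $\Gamma_k$ so the sporadic exceptions never arise) to show that \emph{all} the linear forms $f(x_i)$ and $\partial_j f(x_i)$ in the coefficients $u$ are independent. After a linear change of coordinates sending $x_i - z$ to $e_1$, the rank conditions then become $k$ determinantal loci of the expected codimension $n-c$ meeting properly with the $kc$ hyperplanes $F_u(x_i)=0$, giving $\pi$-fiber codimension exactly $kn$. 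This detour through determinantal varieties is what dissolves your worry about fat points versus direction conditions.

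For the base dimension, rather than counting equations and then patching in the $k-1$ equidistance relations, the paper shows the image of $\pi$ (which already carries the equidistance constraints) is birational to $(\CC^n)^k$: the forward map forgets the $t_i$, and the inverse recovers them as the barycentric coordinates of the circumcenter of the simplex on $x_1,\dots,x_k$. This gives dimension $kn$ on the nose and makes your self-correction unnecessary.
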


The proof of this theorem is provided at the end of this section
and follows from the Alexander-Hirschowitz Theorem \cite{Alexander1995Interp}, which is a result for homogeneous hypersurfaces. 
Thus, we need to move from affine space to projective space.
In particular, $\PP^N$ parameterizes homogeneous polynomials in $\numvars+1$ variables of degree $d$
where $N = \binom{d+\numvars}{d}-1$.  
For $a\in \PP^N$, let $F_a(x_0,\ldots,x_n)$ denote the corresponding homogeneous polynomial of degree $d$.

Let $p_1,\ldots,p_k\in \PP^n$ be general.
The Alexander-Hirschowitz Theorem considers the dimension
of the interpolation space of polynomials of degree $d$ having at least a double point 
at~$p_i$, namely
\[I_{n,k}=\left\{ a\in \PP^{N}~\left|~ F_a(p_i)=\frac{\partial F_a}{\partial x_j}(p_i)=0
\text{ for } j=1,\dots,n \text{ and }i=1,\dots, k\right\}\right..\]

\begin{theorem}[Alexander-Hirschowitz \cite{Alexander1995Interp}]
\label{thm:AH}The interpolation space $I_{n,k}$ has the {\it expected dimension}, 
i.e., $\dim(I_{n,k})=\min\{(n+1)k, N\}$, except for the following cases
\begin{itemize}
    \item $d=2, 2\leq k\leq n$;
    \item $n=2, d=4, k=5$;
    \item $n=3,d=4,k=9$;
    \item $n=4,d=3,k=7$;
    \item $n=4,d=4,k=14$.
\end{itemize}
\end{theorem}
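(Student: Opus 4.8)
The plan is to prove \Cref{thm:AH} along the classical route of Alexander and Hirschowitz: first recast the interpolation statement as a statement about secant varieties of Veronese varieties, then run an induction on the degree $d$ via the \emph{differential Horace method}, treating the five exceptional families separately by explicit constructions. This is a deep theorem assembled over several papers, so the goal of the sketch is to identify the architecture and the one genuinely hard ingredient.

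The first step translates the problem into the geometry of secant varieties. Let $v_d\colon\PP^n\hookrightarrow\PP^N$ be the $d$-th Veronese embedding, with $N=\binom{d+n}{d}-1$, set $q_i=v_d(p_i)$, and let $Z=2p_1+\cdots+2p_k$ be the scheme of $k$ generic (scheme-theoretic) double points. Terracini's lemma identifies the tangent space to the $k$-th secant variety $\sigma_k\bigl(v_d(\PP^n)\bigr)$ at a generic point with $\langle T_{q_1}v_d,\ldots,T_{q_k}v_d\rangle\subseteq\PP^N$, and this span is projectively dual to the linear system $H^0(\PP^n,\mathcal I_Z(d))$ of degree-$d$ forms singular at all the $p_i$; consequently $\dim(I_{n,k})=N-1-\dim\sigma_k\bigl(v_d(\PP^n)\bigr)$. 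Hence ``$I_{n,k}$ has the expected dimension'' is equivalent to the non-defectivity statement $\dim\sigma_k\bigl(v_d(\PP^n)\bigr)=\min\{k(n+1)-1,\,N\}$, i.e. to the assertion that $k$ generic double points impose the maximal possible number of independent linear conditions on degree-$d$ forms. All subsequent work proves this.

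The inductive engine is the hyperplane restriction (Castelnuovo) sequence. Fix a general hyperplane $H\cong\PP^{n-1}$ and specialize $j$ of the $k$ double points so that their supports lie on $H$; one then has
$$0\longrightarrow \mathcal I_{\operatorname{Res}_H Z}(d-1)\longrightarrow \mathcal I_Z(d)\longrightarrow \mathcal I_{Z\cap H,\,H}(d)\longrightarrow 0,$$
so that $h^0(\mathcal I_Z(d))$ is squeezed between the cohomology of a degree-$(d-1)$ interpolation problem on $\PP^n$ (with $k-j$ double points and $j$ simple points, the residue) and that of a degree-$d$ interpolation problem on $\PP^{n-1}$ (with $j$ double points, the trace); if by induction both are non-special and the two contributions balance numerically, a semicontinuity argument promotes independence of conditions from the specialized configuration to the generic one. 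The difficulty is that the naive specialization of a double point onto $H$ is inefficient --- the trace is a full double point of $H$ while the residue still retains a simple point --- so the numerics almost never balance. Alexander and Hirschowitz's remedy, the differential Horace method, replaces a specialized $2$-fat point by a carefully chosen flat limit of fat points (a length-$(n+1)$ scheme that is partly a fat point within $H$ and partly a normal jet), for which the trace/residue split can be tuned; proving that these limit schemes still impose independent conditions, by a first-order dimension count, is the technical heart of the argument.

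The main obstacle is exactly this differential Horace step together with making the induction close: one must choose the number $j$ of specialized points and the shapes of the limit schemes so that the two cohomological contributions add correctly for every value of $N+1\pmod{n+1}$, and one must seed the induction with enough directly verified cases. The base cases are $n=1$ (immediate), $d=1$, and $d=2$ (elementary, and the source of the exceptional family $d=2$, $2\le k\le n$: a quadric singular at $k$ general points is a cone with vertex containing their span, so these quadrics form an unexpectedly large system), together with the sporadic triples $(n,d,k)\in\{(2,4,5),(3,4,9),(4,3,7),(4,4,14)\}$, each of which is defective because an explicit unexpected hypersurface exists --- for instance, twice the unique conic through $5$ general points of $\PP^2$ is a quartic singular at all of them --- and must be excised from the inductive claim. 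Once the exceptional list is isolated and the base cases established, the differential Horace induction propagates non-defectivity to every remaining triple $(n,d,k)$, which is the content of \Cref{thm:AH}. (A reader seeking a more streamlined route could follow the reorganized proofs of Chandler or of Brambilla--Ottaviani, but the architecture --- Terracini reduction, exceptional cases, differential Horace induction --- is unchanged.)
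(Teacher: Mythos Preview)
The paper does not prove \Cref{thm:AH}; it is stated and attributed to Alexander and Hirschowitz \cite{Alexander1995Interp} and then used as a black box to deduce \Cref{prop:aff-ind} and \Cref{prop:ind-jacob}, which in turn feed into the proof of \Cref{thm:finitely_many_bottlenecks}. There is therefore no ``paper's own proof'' to compare against.

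Your sketch is an accurate high-level outline of the classical argument: the Terracini reduction to non-defectivity of secant varieties of Veronese varieties, the Castelnuovo exact sequence as the inductive engine, the differential Horace method to repair the trace/residue imbalance, and the explicit identification of the exceptional cases. As you yourself flag, this is an architecture rather than a proof --- the differential Horace step and the closing of the induction across all residues of $N+1$ modulo $n+1$ constitute a substantial body of work spread over several papers, and your write-up does not attempt the details. For the purposes of this paper that is entirely appropriate: the theorem is invoked, not re-proved, and a citation suffices.
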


An equivalent statement of this theorem is that
the $k$-secant variety of the $d^{\rm th}$ Veronese embedding of $\PP^n$, 
which we will call the $(n,d)$-Veronese variety $V_{n,d}$, 
has the expected dimension except for the listed exceptions.

\begin{remark}
Suppose that $p_1,\dots, p_k, q_1,\dots, q_k \in \PP^n$ 
where $k \leq n+1$ are such that $p_1,\dots,p_k$ 
and $q_1,\dots,q_k$ each span a $(k-1)$-dimensional space.
Let $I_p$ and $I_q$ denote the interpolation space $I_{n,k} \subseteq\PP^N$ as defined above, respectively.
Then, $I_p$ and $I_q$ have the same dimension. To see this, first note that there is a full rank linear map $L:\PP^n \rightarrow \PP^n$ such that $Lq_i=p_i$ for all $i$. More explicitly, complete $p_1,\dots,  p_k$ to a spanning set $p_1,\dots ,p_{n+1}$ of $\PP^n$ and similarly for $q_1,\dots,q_k$. Let $P=(p_1 \cdots p_{n+1})$ and $Q=(q_1 \cdots q_{n+1})$ be $(n+1) \times (n+1)$-matrices whose columns are the homogeneous coordinates of $p_1,\dots,p_{n+1}$ and $q_1,\dots,q_{n+1}$. Then $L$ is represented by $PQ^{-1}$. 

The group $\textrm{PGL}_n$ acts on the parameter space of hyper surfaces $\PP^N$ as follows: for \mbox{$T \in \textrm{PGL}_n$}, $Ta$ is given by the polynomial $F_a \circ T^{-1}$. Using this action and with $L$ as above, $LI_q=I_p$. In particular, by the chain rule $J(F_a \circ L)(q_i)=JF_a(p_i)L$ for all $i$ and $a \in \PP^N$.  
\end{remark}

\begin{proposition}
Let $p_1,\ldots,p_k\in \PP^n$ with $k \leq n+1$ and suppose that $p_1,\ldots,p_k$ span a $(k-1)$-dimensional subspace of $\PP^n$. With notation as above, the interpolation space has the expected dimension except if $d=2$ and $2\leq k\leq n$.\label{prop:aff-ind}
\end{proposition}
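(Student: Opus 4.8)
The plan is to reduce \Cref{prop:aff-ind} to \Cref{thm:AH} (Alexander--Hirschowitz) by exploiting the fact that the dimension of the interpolation space $I_{n,k}$ depends only on the projective-linear type of the point configuration $p_1,\dots,p_k$, which is exactly the content of the preceding remark. The key observation is that \Cref{thm:AH} computes $\dim I_{n,k}$ when $p_1,\dots,p_k$ are \emph{general} in $\PP^n$, and for $k\leq n+1$ general points automatically span a $(k-1)$-dimensional subspace. So the substantive claim to verify is that ``spanning a $(k-1)$-dimensional subspace'' is the \emph{only} genericity condition that matters here: once the $p_i$ span a $(k-1)$-plane, they are projectively equivalent to any general configuration of $k$ points, and hence $\dim I_{n,k}$ takes the same (expected) value, except in the AH exceptional cases that survive the restriction $k\leq n+1$.

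First I would record the group-action input precisely. By the remark immediately preceding the proposition, $\mathrm{PGL}_n$ acts on $\PP^N$ by $Ta = F_a\circ T^{-1}$, and if $p_1,\dots,p_k$ and $q_1,\dots,q_k$ each span a $(k-1)$-dimensional subspace then there is $L\in\mathrm{PGL}_n$ with $Lq_i = p_i$ and $L\cdot I_q = I_p$; in particular $\dim I_p = \dim I_q$. Next I would invoke \Cref{thm:AH} with a \emph{general} choice $q_1,\dots,q_k$: for $k\leq n+1$ general points of $\PP^n$ span a $(k-1)$-plane, so the hypothesis applies, and $\dim I_{n,k}(q) = \min\{(n+1)k,\,N\}$ unless $(n,d,k)$ is one of the exceptional tuples in \Cref{thm:AH}. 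Now I would go through those exceptions under the constraint $k\leq n+1$: the case $d=2$, $2\leq k\leq n$ does satisfy $k\leq n+1$ and must be excluded (this is the stated exception in the proposition); the tuple $(n,d,k)=(2,4,5)$ has $k=5>3=n+1$, so it is vacuous here; likewise $(3,4,9)$ has $9>4$, $(4,3,7)$ has $7>5$, and $(4,4,14)$ has $14>5$. Hence every sporadic exception is ruled out by $k\leq n+1$, leaving only the infinite family $d=2$. Combining with the transfer of dimension via $L$, we conclude $\dim I_{n,k}(p) = \dim I_{n,k}(q) = \min\{(n+1)k,N\}$ for our spanning configuration $p$, except when $d=2$ and $2\leq k\leq n$, which is exactly the claim.

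The only point requiring a little care — and the place I expect a referee to look hardest — is the boundary case $k=n+1$ in the $d=2$ family: $n+1$ general points of $\PP^n$ are in general linear position, i.e.\ they span all of $\PP^n$, which is an $n$-dimensional (not merely $(k-1)=n$-dimensional, but that is the same thing) subspace, so the proposition's hypothesis is still met and one must check that $k=n+1$ is genuinely \emph{not} an AH exception (indeed the exception is only $2\leq k\leq n$, so $k=n+1$ is fine and the quadric interpolation space has expected dimension). Dually, one should note that for $k=n+1$ the double points impose $(n+1)(n+1)$ conditions on a space of quadrics of projective dimension $N=\binom{n+2}{2}-1$, and one checks $\min\{(n+1)^2,N\}$ is the actual dimension by the general AH statement; no separate argument is needed. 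Everything else is bookkeeping: the genericity needed is precisely linear independence of the configuration up to the span, and the $\mathrm{PGL}_n$-transport makes the answer configuration-independent within that stratum.
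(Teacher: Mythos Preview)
Your proposal is correct and follows essentially the same approach as the paper: pick a general (hence linearly independent) configuration $q_1,\dots,q_k$, apply Alexander--Hirschowitz to get the expected dimension for $I_q$, and use the $\mathrm{PGL}_n$-transport from the preceding remark to conclude $\dim I_p = \dim I_q$. Your explicit check that the sporadic AH exceptions all violate $k\leq n+1$ makes precise a step the paper leaves implicit.
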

\begin{proof}
Assume that $d>2$ or that $k$ is not in $[2,n]$. Let $q_1,\dots,q_k \in \PP^n$ be independent points such that their interpolation space has the expected dimension (this is true for general $q_1,\dots,q_k$ by the Alexander-Hirschowitz theorem). Let $L: \PP^n \rightarrow \PP^n$ be a full rank linear map such that $Lq_i=p_i$ for all $i$ and let $I_p$ and $I_q$ be the interpolation spaces of $p_1,\dots,p_k$ and $q_1,\dots,q_k$, respectively. Since $I_p=LI_q$, they have the same dimension.
\end{proof}

\begin{proposition} If the $k$-secant variety  of the $(n,d)$-Veronese variety $V_{n,d}$ has the expected dimension for generic $p_1,\ldots, p_k \in\PP^n$, then the $k(n+1)$ linear forms in $(a_0,\dots,a_N)$ which comprise the entries of $JF_a(p_1), \ldots, JF_a(p_k)$ are independent. \label{prop:ind-jacob}
\end{proposition}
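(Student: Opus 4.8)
The plan is to pass to the dual (coefficient) description and read the statement off Terracini's Lemma. Fix the monomial normalization $F_a=\sum_\alpha a_\alpha x^\alpha$, so the coefficient vector $a=(a_0,\dots,a_N)$ ranges over $\CC^{N+1}$ (any other normalization only rescales basis vectors). For a point $p\in\PP^n$, each entry $\partial F_a/\partial x_j(p)$ of the row $JF_a(p)$ is a linear form in $a$: it is the pairing of $a$ with the vector $w_{p,j}:=\big(\alpha_j\,p^{\alpha-e_j}\big)_\alpha\in\CC^{N+1}$. First I would record the elementary dictionary: $w_{p,0},\dots,w_{p,n}$ span the affine cone $\widehat T_{v_d(p)}V_{n,d}$ over the embedded tangent space of the $d$-th Veronese variety at $v_d(p)$ — by Euler's identity $\sum_j x_j\,\partial F_a/\partial x_j=d\,F_a$ the point $v_d(p)$ itself lies in this span — and this span has dimension exactly $n+1$ because $v_d$ is an embedding, hence an immersion at every point.

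Granting this dictionary, the assertion becomes pure linear algebra: the $k(n+1)$ forms comprising the entries of $JF_a(p_1),\dots,JF_a(p_k)$ are independent in $(\CC^{N+1})^*$ exactly when the $k(n+1)$ vectors $\{w_{p_i,j}\}$ span a space of dimension $k(n+1)$; since each block already spans an $(n+1)$-dimensional space, this is equivalent to the subspaces $\widehat T_{v_d(p_1)}V_{n,d},\dots,\widehat T_{v_d(p_k)}V_{n,d}$ being in direct sum, i.e. to $\dim\big\langle T_{v_d(p_1)}V_{n,d},\dots,T_{v_d(p_k)}V_{n,d}\big\rangle=k(n+1)-1$, the largest a span of $k$ projective $n$-planes can be.

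Next I would invoke Terracini's Lemma: for generic $p_1,\dots,p_k$ the span $\big\langle T_{v_d(p_1)}V_{n,d},\dots,T_{v_d(p_k)}V_{n,d}\big\rangle$ is the embedded tangent space of the secant variety $\sigma_k(V_{n,d})$ at a generic (hence smooth) point, so its projective dimension equals $\dim\sigma_k(V_{n,d})$; this is precisely the identification already used in the excerpt when restating \Cref{thm:AH} via secant varieties. The hypothesis says $\sigma_k(V_{n,d})$ has the expected dimension $\min\{k(n+1)-1,N\}$, and in the only regime where the conclusion can even hold — $k(n+1)\le N+1$, which is automatic in the cases needed for \Cref{thm:finitely_many_bottlenecks} (there $d\ge 3$ and $2\le k\le n+1$) — this equals $k(n+1)-1$. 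Combining with the previous paragraph gives the desired independence, for generic $p_1,\dots,p_k$.

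The one genuinely non-formal ingredient is Terracini's Lemma; the rest is duality between forms in $a$ and vectors in $\CC^{N+1}$ together with Euler's identity. The point that needs care is matching ``expected dimension of $\sigma_k(V_{n,d})$'' with ``$k(n+1)-1$'' rather than ``$N$'': one must stay in the range $k(n+1)\le N+1$, where the $k(n+1)$ double-point conditions are genuinely independent rather than merely spanning $\PP^N$, and I would note that this range contains every case used downstream. A minor bookkeeping remark is that, for $p_i$ with nonzero $0$-th coordinate, the conditions $F_a(p_i)=0$ and $\partial F_a/\partial x_j(p_i)=0$ for $j=1,\dots,n$ (the ones defining $I_{n,k}$) span the same space of linear forms as the full row $JF_a(p_i)$ — again by Euler — so the present proposition is exactly the ``$I_{n,k}$ has the expected dimension'' reformulation and carries no hidden content.
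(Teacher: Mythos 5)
Your proof is correct and takes essentially the same route as the paper: both identify the coefficient vector of each linear form $\partial F_a/\partial x_j(p_i)$ with the partial derivative $\nu_{x_j}(p_i)$ of the Veronese map, note these span the (affine cone over the) tangent space of $V_{n,d}$ at $\nu(p_i)$, and then read off independence from the hypothesis on $\sigma_k(V_{n,d})$ via Terracini's Lemma (which the paper leaves implicit). Your caveat that one must be in the range $k(n+1)\le N+1$ for the statement to hold literally, and that this is automatic in the cases used downstream, is a correct observation that the paper's terse proof passes over without comment.
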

\begin{proof}
Let $\nu(p)=\nu_{d,n}(p)=(p^j)_{\{j\}}\in V_{n,d}$.
Then, the projective tangent space to $V_{n,d}$ 
at $\nu(p)$ is spanned by 
$\nu_{x_0}(p), \ldots, \nu_{x_n}(p)$ where $\nu_{x_i} = \frac{\partial \nu}{\partial x_i}$. The coefficients of the linear form $\frac{\partial F_a}{\partial x_i}(p)$ are the elements of the $(N+1)$-vector $\nu_{x_i}(p)$.
\end{proof}

If $c=n$, the complete intersection $X_u$ itself is
finite and \Cref{thm:finitely_many_bottlenecks} is immediate, so assume that $c<n$. We may reduce to the case where
none of the equations defining $X_u$ are linear, that is $d_i > 2$ for all $i$. Indeed, for a generic hyperplane $H \subseteq\CC^n$ there is a linear map which preserves algebraic $k$-bottlenecks of $X_u$ while eliminating a variable. After repeatedly removing linear equations we may assume that $d_i > 2$ for all $i$.

In order to show that a general complete intersection has a finite
number of algebraic $k$-bottlenecks, we need to show that the generic fiber of the projection $\eta:S_k \to U$ is finite. We do this by first studying the dimension of the fibers of the projection
$\pi:S_k \to (\prod_{i=1}^k \CC^n) \times \CC^k$.

\begin{lemma}
Let $(x_1,\dots,x_k,t_1,\dots,t_k)$ be an element in the image $\pi(S_k) \setminus \Gamma_k$. Then, $\pi^{-1}(x_1,\dots,x_k,t_1,\dots,t_k)$ has codimension $kn$ in $U$. \label{lem:codim} 
\end{lemma}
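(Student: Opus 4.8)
The plan is to fix a point $(x_1,\dots,x_k,t_1,\dots,t_k) \in \pi(S_k)\setminus\Gamma_k$ and describe the fiber $\pi^{-1}(x_1,\dots,x_k,t_1,\dots,t_k)$ explicitly as a subvariety of $U \subseteq \prod_{i=1}^c P_i$. Since the $t_i$ and $x_i$ are now constants, the point $z = \sum_i t_i x_i$ is fixed, the affine combination condition $\sum t_i = 1$ and the equidistance conditions $d(x_1,z)^2 = d(x_j,z)^2$ are automatically satisfied (they hold for the base point), and the only conditions cutting out the fiber inside $U$ are the algebraic conditions on $F_u$: namely $F_u(x_i) = 0$ and $\operatorname{rank}[x_i - z \ \ JF_u(x_i)^T] \leq c$ for $i = 1,\dots,k$. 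The key observation is that, because the point is not in $\Gamma_k$, the $x_i$ are affinely independent and all $t_i \neq 0$, so $z - x_i$ lies in the affine span of the $x_j$'s and is a nonzero vector pointing "inward"; this is exactly the setting where one can use the affine independence to control how many independent linear conditions these rank constraints impose on the coefficients of $F_u$.

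The main steps I would carry out are as follows. First, I would pass from the affine complete intersection $F_u$ (polynomials of degree at most $d_i$ in $n$ variables) to the homogeneous picture: each coordinate $P_i = \PP^{N_i}$ with $N_i = \binom{d_i+n}{d_i}-1$ parameterizes homogeneous degree-$d_i$ forms in $n+1$ variables, and the conditions $F_{u_i}(x_i) = 0$ together with the rank condition translate, via the Lagrange-multiplier/null-space reformulation $x_i - z + JF_{u_i}(x_i)^T\lambda_i = 0$, into linear conditions on the homogeneous coefficients. Second, for each fixed $i$, I would count the conditions imposed on the single form $u_i$: the vanishing $F_{u_i}(x_i)=0$ is one linear condition, and the rank condition (after homogenizing and using the fixed direction $x_i - z$) imposes further linear conditions coming from the gradient. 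The crucial point, parallel to Proposition \ref{prop:ind-jacob} and Proposition \ref{prop:aff-ind}, is that for general $(x_1,\dots,x_k)$ spanning a $(k-1)$-plane the relevant linear forms are independent, so one gets exactly $n$ independent conditions on $u_i$ per index $i$ when one accounts correctly for the overlap between the point condition and the tangency condition — this is where affine independence and $t_i \neq 0$ are used, since these guarantee that $z$ is genuinely a convex-type combination and the direction $x_i - z$ is nonzero and not forced to be tangent. Third, I would sum over $i = 1,\dots,k$: the conditions for distinct $i$ involve the distinct forms, or more precisely involve distinct "slots," and one checks they remain independent, giving total codimension $kn$ inside $\prod_{i=1}^c P_i$, hence inside the open set $U$.

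The main obstacle I anticipate is precisely the bookkeeping in the second and third steps: correctly identifying that the tangency/rank condition at $x_i$ contributes exactly $n$ (and not $n+1$ or $n-1$) independent linear conditions on the coefficients of the defining forms, and that these remain jointly independent across the $k$ points. This requires using the hypothesis that the point lies outside $\Gamma_k$ (affine independence of the $x_i$, nonvanishing of the $t_i$) in an essential way — the affine independence is what lets one invoke Proposition \ref{prop:aff-ind} to transport the genericity of the interpolation/tangency conditions from independent points to the actual configuration — and it requires being careful about whether the condition $F_{u_i}(x_i) = 0$ is already implied by, or independent from, the tangency conditions. Once the per-point count of $n$ independent conditions is established and their mutual independence across the $k$ points is checked (again via a Veronese/secant-variety dimension count as in Proposition \ref{prop:ind-jacob}), the conclusion that $\pi^{-1}(x_1,\dots,x_k,t_1,\dots,t_k)$ has codimension $kn$ in $U$ follows immediately, and this feeds directly into a dimension count for $S_k$ and hence for the generic fiber of $\eta$ in the proof of Theorem \ref{thm:finitely_many_bottlenecks}.
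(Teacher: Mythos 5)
Your plan follows the same overall route as the paper: fix the fiber, observe that being off $\Gamma_k$ forces $x_i - z \neq 0$ and affine independence, homogenize, and use Propositions~\ref{prop:aff-ind} and~\ref{prop:ind-jacob} (i.e., the Alexander--Hirschowitz input) to establish independence of the linear forms in $u$ that arise, concluding codimension $kn$. This is exactly the paper's strategy, so the core ideas are right.

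There is, however, one genuine slip in the middle of your plan: you claim that the rank conditions ``translate, via the Lagrange-multiplier/null-space reformulation..., into linear conditions on the homogeneous coefficients'' and then speak of counting ``$n$ independent linear conditions per index $i$.'' That is not correct as stated. For a fixed $(x_1,\dots,x_k,t_1,\dots,t_k)$, the condition $\operatorname{rank}\bigl[x_i - z \ \ JF_u(x_i)^T\bigr] \le c$ is a \emph{determinantal} condition on $u$, not a linear one: the Lagrange multiplier reformulation $x_i - z + JF_u(x_i)^T\lambda_i = 0$ is bilinear in $(u,\lambda_i)$, and projecting out $\lambda_i$ gives vanishing of $(c+1)\times(c+1)$ minors, which are polynomial (degree $c+1$) in $u$. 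What \emph{is} linear in $u$ are the individual entries $F_u(x_i)$ and $JF_u(x_i)$, and the role of Propositions~\ref{prop:aff-ind} and~\ref{prop:ind-jacob} (via Euler's formula after homogenizing) is to show that these entries, collected over all $i$, are independent linear forms. Once that independence is in hand, the paper argues that each rank locus is a determinantal variety of the \emph{expected} codimension $n-c$ (by the standard formula for generic determinantal varieties, valid here because the matrix entries behave generically), and it intersects properly with the linear space cut out by the $kc$ vanishing conditions $F_u(x_i)=0$. That gives $k\bigl((n-c)+c\bigr) = kn$. You correctly flag ``bookkeeping'' as the main obstacle, and this is exactly it: you need the determinantal-variety codimension formula, not just a count of independent linear equations. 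Also watch your notation: you write $F_{u_i}(x_i)$ in a way that conflates the index $i$ ranging over the $c$ components of $u$ with the index $i$ ranging over the $k$ points $x_1,\dots,x_k$; the codimension $n$ per point comes from $c$ linear conditions plus $n-c$ from the rank locus, jointly across all $c$ components.
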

\begin{proof}
By assumption, the equations 
\[ \sum_{i=1}^k t_i = 1,\ \  z = \sum_{i=1}^k t_ix_i,\ \  d(x_1, z)^2 = d(x_j, z)^2, \ \ 2\leq j \leq k \]
are satisfied. The fiber $\pi^{-1}(x_1,\dots,x_k,t_1,\dots,t_k)$ is the algebraic subset of $U$ defined by the conditions
\[x_i \in X_u \ \text{ and }\  \text{rank}[x_i-z \ JF_u(x_i)^T]\leq c \  \text{ for } 1\leq i\leq k\text{.} \]
First note that $x_i-z = \sum_{j=1,j\not= i}^k t_j(x_i - x_j)$ for all $i$. In particular, $x_i-z$ is not $0$ because $\{x_i - x_j \}_{j=1,j\not= i}^k$ is linearly independent by assumption and none of $t_1,\dots,t_k$ is $0$. 
For all $i$, there subsequently exists a full rank $\numvars\times\numvars$ matrix $M_i$ such that $M_i(x_i-z)$ is $e_1$, the standard basis vector for $\CC^\numvars$. The fiber is equivalently defined by the conditions
\[
F_u(x_i) = 0 \ \text{ and }\ \text{rank}\begin{bmatrix}
1 & 0 & \dots & 0 \\ 
0 &  &  &  \\ 
\vdots &  & (M_i(JF_u(x_i)^T))' \\
0 & &  
\end{bmatrix} \leq c \ \text{ for } 1 \leq i \leq k
\]
where for any matrix $M$, $M'$ denotes $M$ with the first row deleted. 

We claim that the collection of forms in $u$ comprising the entries of $F_u(x_i)$ and $JF_u(x_i)$ across all $i$, $1\leq i\leq k$, is independent. Forms arising from different components of $F_u$ involve disjoint subsets of the coefficients in $u$, so it suffices to consider the case where $F_u$ is a single polynomial $f$ of degree $d > 2$. Let $\overline{f}$ denote the homogenization of $f$ and $\overline{x_i}$ denote the point in $\PP^\numvars$ with projective coordinates $[x_i;1]$. Note that since the vectors $x_1,\dots,x_k$ are affinely independent, the points $\overline{x_1},\dots,\overline{x_k}$ span a $(k-1)$-dimensional subspace of $\PP^n$ as in the statement of \Cref{prop:aff-ind}. Suppose to the contrary that a relation of the form $\sum_{i=1}^k \alpha_i f(x_i) = \sum_{1\leq i \leq k,1\leq j\leq\numvars} \beta_{ij}\frac{\partial f}{\partial y_j}(x_i)$ holds. Then the same relation holds substituting $\overline{f}$ for $f$ and $\overline{x_i}$ for $x_i$. By Euler's formula, $d\overline{f}(\overline{x_i}) = \sum_{j=1}^{\numvars+1} (\overline{x_i})_j\frac{\partial\overline{f}}{\partial y_{j}}(\overline{x_i})$. So we obtain a relation which contradicts \Cref{prop:aff-ind,prop:ind-jacob}.

We see that $\pi^{-1}(x_1,\dots,x_k,t_1,\dots,t_k)$ is a proper intersection of $k$ determinantal varieties which, by standard results, have codimension $n-c$ and a linear space defined by the linear forms $F_u(x_i)$ for $1 \leq i \leq k$. Altogether, the codimension of the fiber is $k(n-c+c) = kn$. 
\end{proof}

\begin{lemma}
 The dimension of $S_k \setminus \pi^{-1}(\Gamma_k)$ is the dimension of $U$. \label{lem:dimlem}
\end{lemma}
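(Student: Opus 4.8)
The plan is to compute $\dim(S_k\setminus\pi^{-1}(\Gamma_k))$ by applying the fiber-dimension theorem to the projection $\pi$, using \Cref{lem:codim} to control the fibers and a circumcenter computation to control the image. Throughout I keep the reductions already in force for \Cref{thm:finitely_many_bottlenecks}: $c<n$, each $d_i\ge 3$, and $2\le k\le n+1$ (if $k>n+1$ there are no $k$ affinely independent points in $\CC^n$, so $S_k\setminus\pi^{-1}(\Gamma_k)$ is empty). Write $W=\overline{\pi(S_k\setminus\pi^{-1}(\Gamma_k))}$ and note $\pi(S_k\setminus\pi^{-1}(\Gamma_k))=\pi(S_k)\setminus\Gamma_k$. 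Since a point outside $\Gamma_k$ has its whole $\pi$-fiber outside $\pi^{-1}(\Gamma_k)$, the nonempty fibers of $\pi|_{S_k\setminus\pi^{-1}(\Gamma_k)}$ are exactly the $\pi$-fibers over $\pi(S_k)\setminus\Gamma_k$, and by \Cref{lem:codim} each has codimension $kn$ in $U$, hence dimension $\dim U-kn$. As all nonempty fibers share this dimension, the fiber-dimension theorem gives $\dim(S_k\setminus\pi^{-1}(\Gamma_k))=\dim W+\dim U-kn$, so it suffices to prove $\dim W=kn$.

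For the upper bound $\dim W\le kn$, observe that every point $(x_1,\dots,x_k,t_1,\dots,t_k)$ of $\pi(S_k)\setminus\Gamma_k$ has $x_1,\dots,x_k$ affinely independent and satisfies $\sum_i t_i=1$ together with the equidistance relations for $z=\sum_i t_ix_i$; hence $z$ is the circumcenter of the simplex on $x_1,\dots,x_k$ and the $t_i$ are its barycentric coordinates. For general affinely independent $x_i$ the circumcenter is unique (the relevant Gram matrix, which is invertible for a regular simplex, is invertible generically), so off a proper closed subset a point of $\pi(S_k)\setminus\Gamma_k$ is determined by $(x_1,\dots,x_k)$ alone; a routine dimension count on the complementary locus (stratified by the corank of that Gram matrix, using $n\ge k-1$) then yields $\dim W\le kn$.

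For the lower bound I would show $\pi(S_k)\setminus\Gamma_k$ is dense in the irreducible $kn$-dimensional set of all $(x_1,\dots,x_k,t_1,\dots,t_k)$ with $x_i$ affinely independent, $z=\sum t_ix_i$ the circumcenter, and $t_i$ its barycentric coordinates. Given general such data — for general $x_i$ all $t_i$ are nonzero — I construct $u\in U$ realizing it. Take the first defining polynomial $f_1$ of degree $d_1\ge 3$ with $f_1(x_\ell)=0$ and $\nabla f_1(x_\ell)=x_\ell-z$ for $\ell=1,\dots,k$: these $k(n+1)$ linear conditions on the coefficients of $f_1$ are independent, because passing to the homogenization and applying Euler's formula turns them into the double-point conditions of \Cref{prop:aff-ind} at the $k$ points $[x_\ell;1]$, which span a $(k-1)$-plane in $\PP^n$, and for $d_1\ge 3$ the Alexander-Hirschowitz exceptions of \Cref{thm:AH} all have $k>n+1$ and so do not occur. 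Then choose $f_2,\dots,f_c$ general subject to $f_j(x_\ell)=0$ for all $\ell$. Now $x_\ell-z$ is literally a column of $JF_u(x_\ell)^T$, so $\operatorname{rank}[x_\ell-z\ JF_u(x_\ell)^T]\le c$; the conditions $\sum t_i=1$, $z=\sum t_ix_i$, and the equidistance relations hold by construction; the point lies outside $\Gamma_k$; and a Bertini-type argument shows that for general such $(f_1,\dots,f_c)$ the variety $V(F_u)$ is smooth and equidimensional of codimension $c$, i.e.\ $u\in U$. This proves density, hence $\dim W=kn$, and therefore $\dim(S_k\setminus\pi^{-1}(\Gamma_k))=\dim U$.

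I expect the main obstacle to be the lower bound, and within it the step of promoting a first-order tangency prescription at the $k$ general points to an honest member $u$ of the open locus $U$. The tangency prescription itself is precisely what the Alexander-Hirschowitz package assembled in \Cref{prop:aff-ind,prop:ind-jacob} delivers — and is where both the non-quadratic hypothesis $d_i\ne 2$ and the bound $k\le n+1$ are used — but guaranteeing that a general complete intersection carrying that tangency data is globally smooth of codimension $c$ requires a separate general-position argument that would need to be carried out with some care.
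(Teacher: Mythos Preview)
Your approach is correct and matches the paper's: both apply the fiber-dimension theorem to $\pi$, invoking \Cref{lem:codim} for the fibers and a circumcenter argument for the image. The paper is more economical on the image side: rather than proving separate upper and lower bounds on $\dim W$, it simply observes that the set $V$ of all $(x_1,\dots,x_k,t_1,\dots,t_k)$ with $\sum t_i=1$, the equidistance relations, affine independence, and $t_i\ne 0$ is birational to $(\CC^n)^k$ via the projection forgetting the $t_i$ (the inverse being the barycentric coordinates of the circumcenter, which are rational in the $x_i$), giving $\dim V=kn$ in one stroke. Where you are more careful than the paper is precisely the point you flag as the main obstacle: the paper asserts with ``one can easily see'' that the image of $\pi$ equals all of $V$, without spelling out the construction of a $u\in U$ over a given point. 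Your Alexander--Hirschowitz plus Bertini argument fills this gap; the paper leaves it implicit (presumably relying on the independence established in the proof of \Cref{lem:codim} together with the fact that $U$ is open and dense in $\prod_i P_i$).
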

\begin{proof}
Consider the image $V$ of $\pi:(S_k\setminus\pi^{-1}(\Gamma_k))\to (\prod_{i=1}^k \CC^\numvars)\times \CC^k$. One can easily see 
that the image is the open algebraic subset comprised of all $(x_1,\dots,x_k,t_1,\dots,t_k)$ where
\[ \sum_{i=1}^k t_i = 1,\ \  z = \sum_{i=1}^k t_ix_i,\ \  d(x_1, z)^2 = d(x_j, z)^2, \ \ 2\leq j \leq k, \]
the $x_1,\dots,x_k$ are affinely independent, and none of the $t_i$ are 0. We claim that the image has codimension $k$, i.e., dimension $kn$. In fact, the image $V$ is birationally equivalent to~$\prod_{i=1}^k \CC^n$. The forward morphism is given by the projection map $g:V\to\prod_{i=1}^k \CC^n$ where $g(x_1,\dots,x_k,t_1,\dots,t_k)=(x_1,\dots,x_k)$. Setting $X$ to be the $\numvars\times k$ matrix whose columns are the the vectors with coordinates $x_1,\dots,x_k$, the inverse $h:\prod_{i=1}^k \CC^n \to V$ is given by taking $h(x_1,\dots,x_k)$ to be $(x_1,\dots,x_k,t_1(X),t_2(X),\dots,t_k(X))$ where the functions $t_i(X)$ are rational functions yielding the barycentric coordinates of the circumcenter of the simplex whose vertices are the columns of $X$ (see, e.g., \cite[Thm.~2.1.1]{FiedlerMatGeom} and \cite[pp.~707--708]{VanderZeeWellCentered2013}).

By \Cref{lem:codim}, the fiber of $\pi$ has codimension $kn$ for $z$ in the image $V$. It follows that $S_k\setminus\pi^{-1}(\Gamma_k)$ has dimension $\dim(U) - kn + (kn + k - k) = \dim(U)$. 
\end{proof}

Building on these results, we now present the proof
of \Cref{thm:finitely_many_bottlenecks}.

\begin{proof}[Proof of \Cref{thm:finitely_many_bottlenecks}]
Let $S$ be an irreducible component of $S_k \setminus \pi^{-1}(\Gamma_k)$. By \Cref{lem:dimlem}, $\dim{S} \leq \dim{U}$. If $\eta_{\vert S}$ does not dominate $U$, then $S \cap \eta^{-1}(u)$ is empty for general~\mbox{$u \in U$}. 
However, if $\eta_{|S}$ dominates $U$, then $\dim{S}=\dim{U}$ and $\eta^{-1}(u)$ is finite for general~\mbox{$u \in U$}.
\end{proof}

\section{Computational experiments for feature sizes}
\label{sec:examples}
This Section contains results from computing the reach, bottlenecks, and weak feature size of examples of co-dimension $1$ in $\mr^2$ and co-dimensions $1$ and $2$ in $\mr^3$. Data results and code for reproducing these computations are available at \url{https://github.com/P-Edwards/wfs-and-reach-examples}.

\begin{example}\label{ex:ButterflyAll}
Consider the ``butterfly curve'' in $\mr^2$, which is the real part of the algebraic variety defined as $V(F)\cap\mr^2$ where $F = x^4 - x^2y^2 + y^4 - 4x^2 - 2y^2 - x - 4y +1$. This example has been considered before, e.g., 
by Brandt and Weinstein \cite{brandt2019voronoi}. 

The algebraic medial axis for the butterfly curve was computed using numerical algebraic geometry 
and found to be irreducible of degree $120$.
The real part of this curve is shown in \Cref{fig:ButterflyMedial}(a) with the
pieces in cyan forming the geometric medial axis.
A lower bound of $0.103$ on the reach was estimated with a homotopy continuation method based on \Cref{cor:reach_and_lfs_hom} in agreement
with \cite[Ex.~6.1]{brandt2019voronoi}.
The points computed via \Cref{cor:reach_and_lfs_hom}
on the geometric medial axis are shown in red in~\Cref{fig:ButterflyMedial}(b).

\begin{figure}[!b]
    \centering
    \begin{tabular}{cc}
    \includegraphics[scale=0.14]{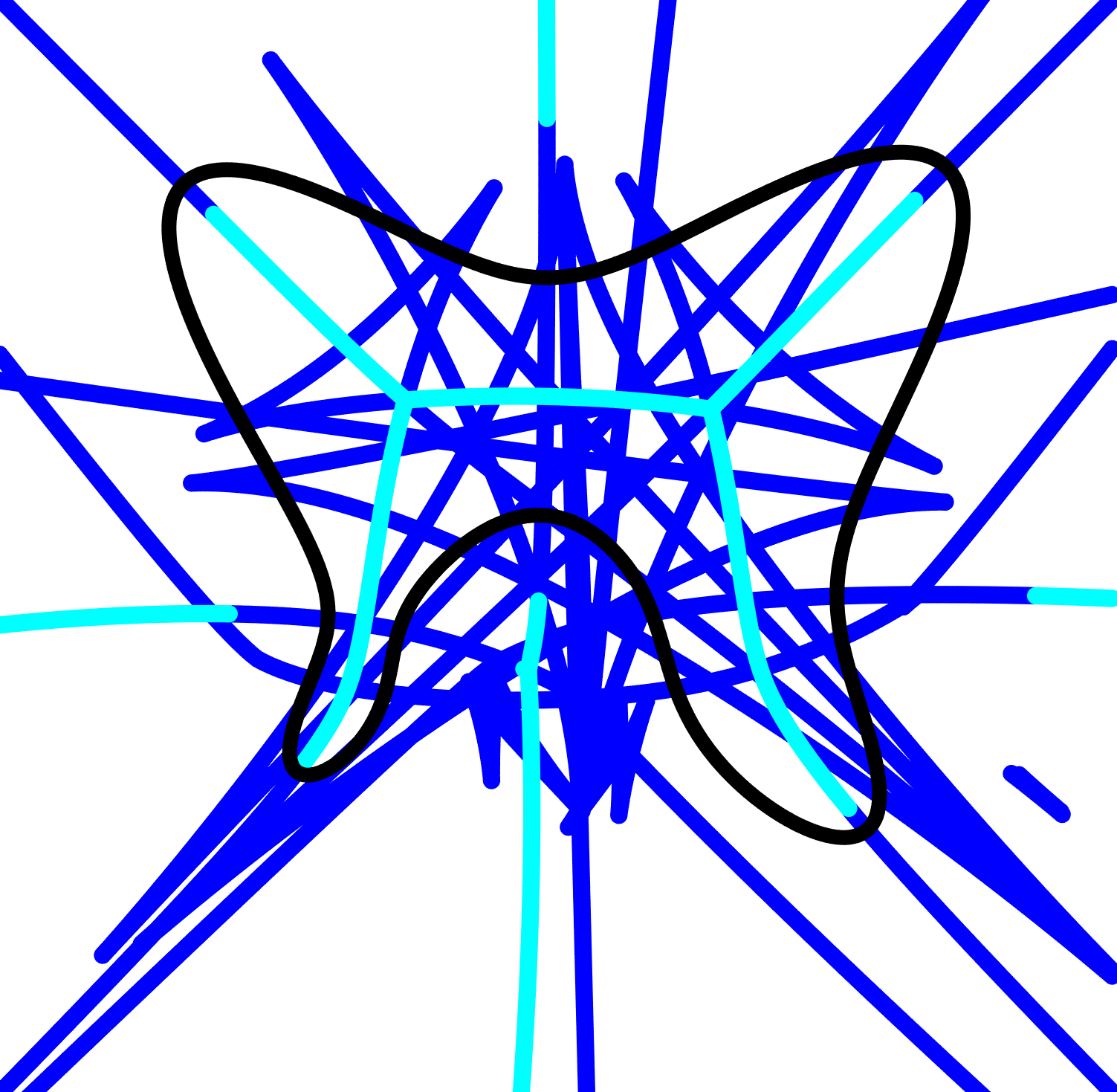} & 
    \includegraphics[scale=0.14]{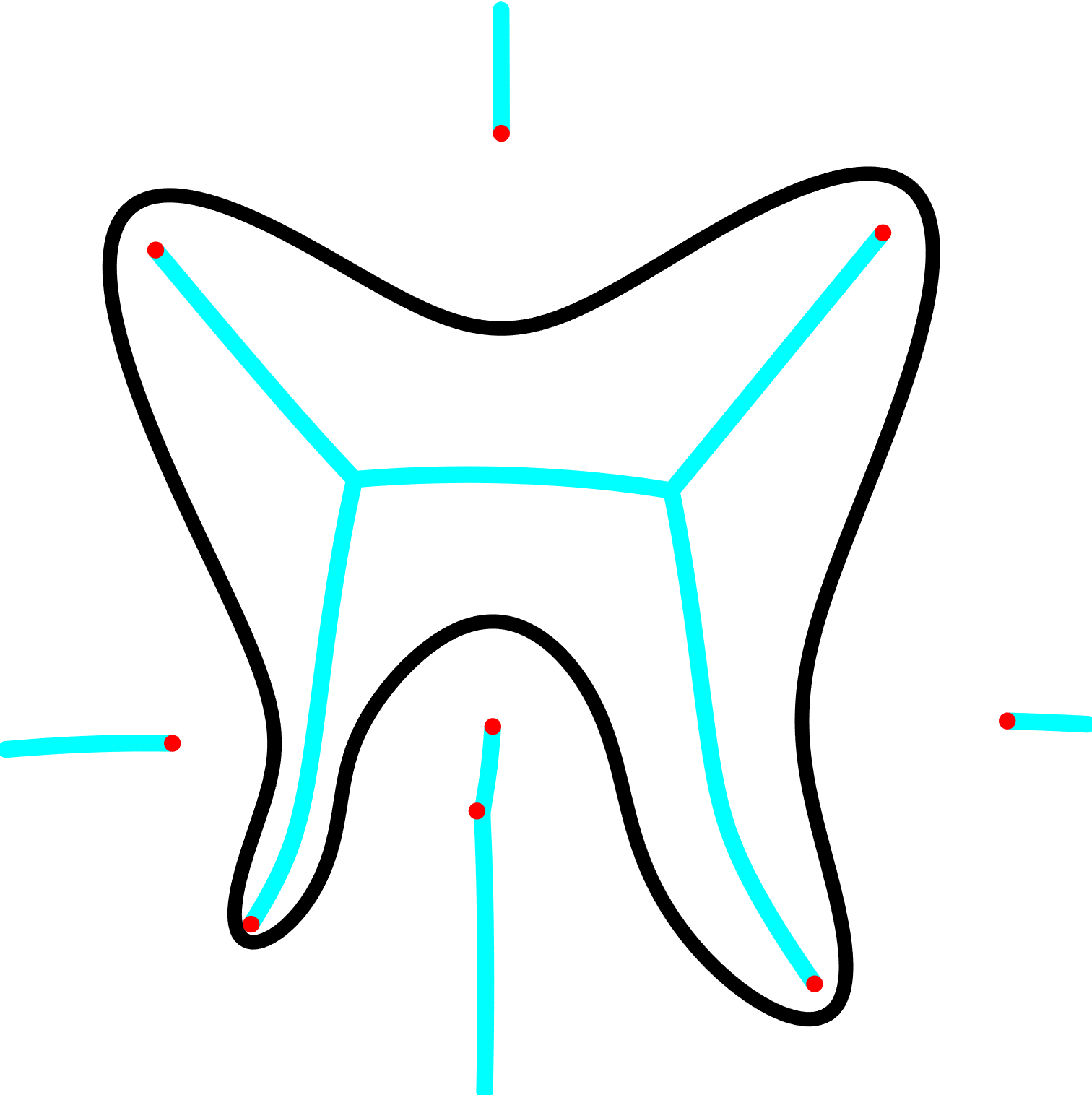} \\
    (a) & (b) 
    \end{tabular}
    \caption{For butterfly curve: (a) algebraic and geometric medial axis; (b)~geometric medial axis with critical points.}
    \label{fig:ButterflyMedial}
\end{figure}

To compute the weak feature size of $V(F)\cap\mr^2$,
we used the numerical algebraic geometric method in \Cref{cor:wfs_pd_cor}. 
For both $k=2$ and $k=3$, the results indicate that the irreducible components of $B_k(F)$ not contained in $\Gamma_k$ are all isolated points, i.e., $V(F)$ has finitely many algebraic bottlenecks. 
The following table provides a summary of the outputs. 
In particular, the weak feature size of $V(F)\cap\mr^2$ was determined to be approximately 0.251 and is attained by a geometric 2-bottleneck (cf., \cite[Ex.~6.1]{brandt2019voronoi}).
\Cref{fig:butterfly_geometric_bottlenecks,fig:butterfly_algebraic_bottlenecks} show various types of~bottlenecks
for the butterfly curve.

\begin{center}
\begin{tabular}{|c|c|c|}
    \hline 
    & $k=2$ & $k=3$  \\
    \hline
    Number of points on $B_k(G)$ computed & 392 & 2817 \\
    \hline
    Number of computed points in $\Gamma_k$ & 200 &  1089 \\
    \hline
    Algebraic $k$-bottlenecks of $V(G)$ & 96 & 288 \\ 
    \hline
    Real algebraic $k$-bottlenecks of $V(G)$ & 26 & 28 \\ 
    \hline
    Real algebraic $k$-bottlenecks of $V(G)\cap\mr^2$ & 22 & 17 \\
    \hline
    Geometric $k$-bottlenecks of $V(G)\cap\mr^2$ & 3 & 2 \\ 
    \hline 
\end{tabular} 
\end{center}

\begin{figure}
    \centering
    \includegraphics[scale=0.5]{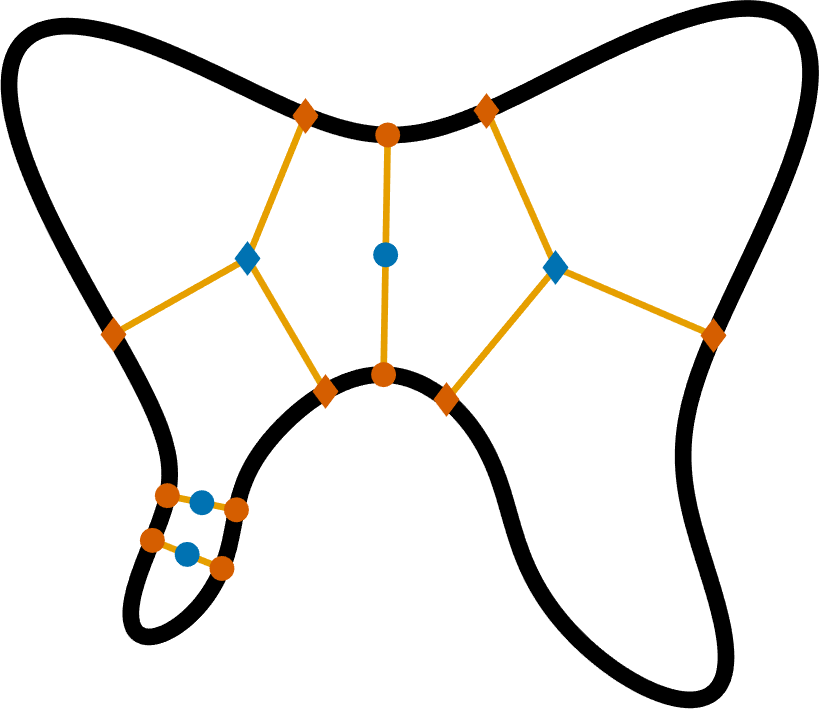}
    \caption{Geometric 2-bottlenecks (dark blue circles) and 3-bottlenecks (dark blue diamonds) of the butterfly curve. Orange dots are distance minimizers and connect to bottlenecks with light orange lines. }
    \label{fig:butterfly_geometric_bottlenecks}
\end{figure}
\begin{figure}
    \centering
    \includegraphics[scale=0.5]{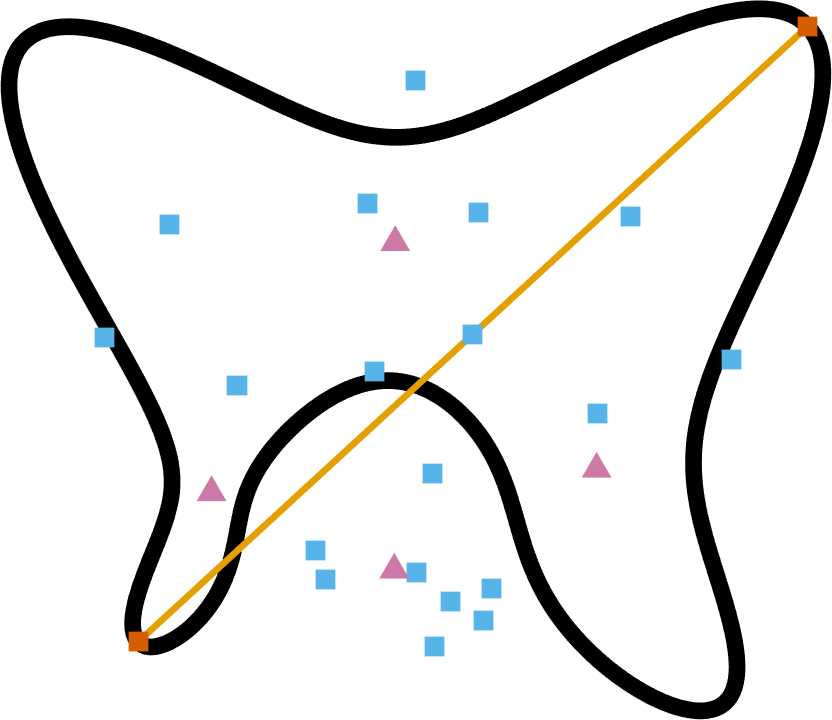}
    \hspace{0.10\textwidth}
    \includegraphics[scale=0.5]{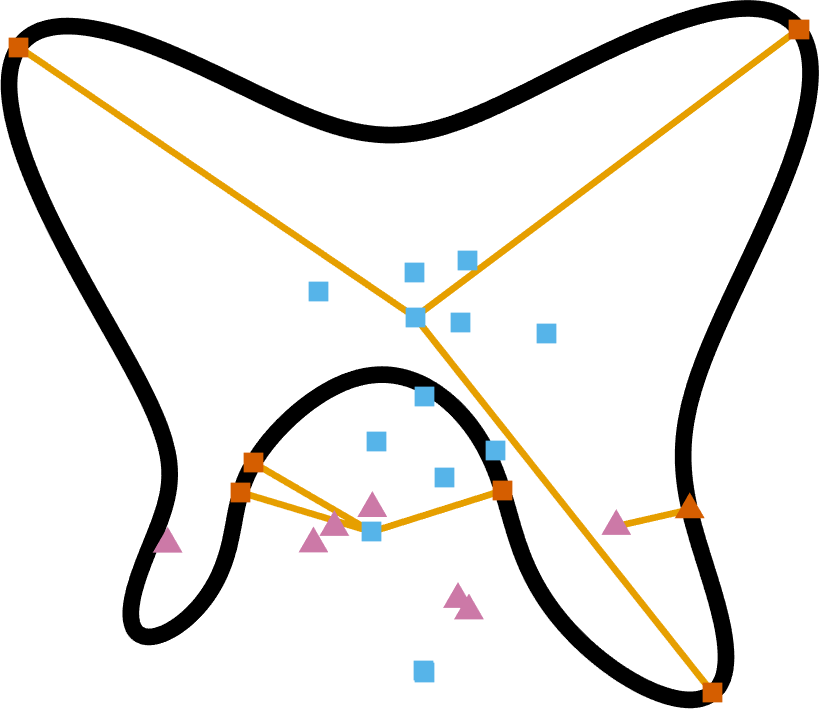}
    \caption{Algebraic 2-bottlenecks (left) and 3-bottlenecks (right) of the butterfly curve. Algebraic bottlenecks are blue and real algebraic bottlenecks of 
    the butterfly curve are pink. For an illustrative subset, orange distance-critical points are connected to their corresponding bottleneck by orange lines.}
    \label{fig:butterfly_algebraic_bottlenecks}
\end{figure}

\end{example}

\begin{example}\label{ex:clebsch}

As an example of a non-quadratic complete intersection
where \Cref{thm:finitely_many_bottlenecks} holds,
consider the intersection of a torus and Clebsch surface
in $\mr^3$ defined by 
\[
F = \left[\begin{array}{c} (R^2 - r^2 + x^2 + y^2 + z^2)^2 - 4R^2(x^2 + y^2) \\
x^3 + y^3 + z^3 + 1 - (x + y + z + 1)^3\end{array}\right]
\]
with $R= \frac{3}{2}$ and $r=1$.
In particular, the second equation 
is an algebraic surface with all 27 exceptional lines contained in $\mr^3$~\cite{Clebsch}. 
This curve is illustrated in \Cref{fig:ClebschTorus}.

\begin{figure}[!t]
    \centering
    \includegraphics[scale=0.2]{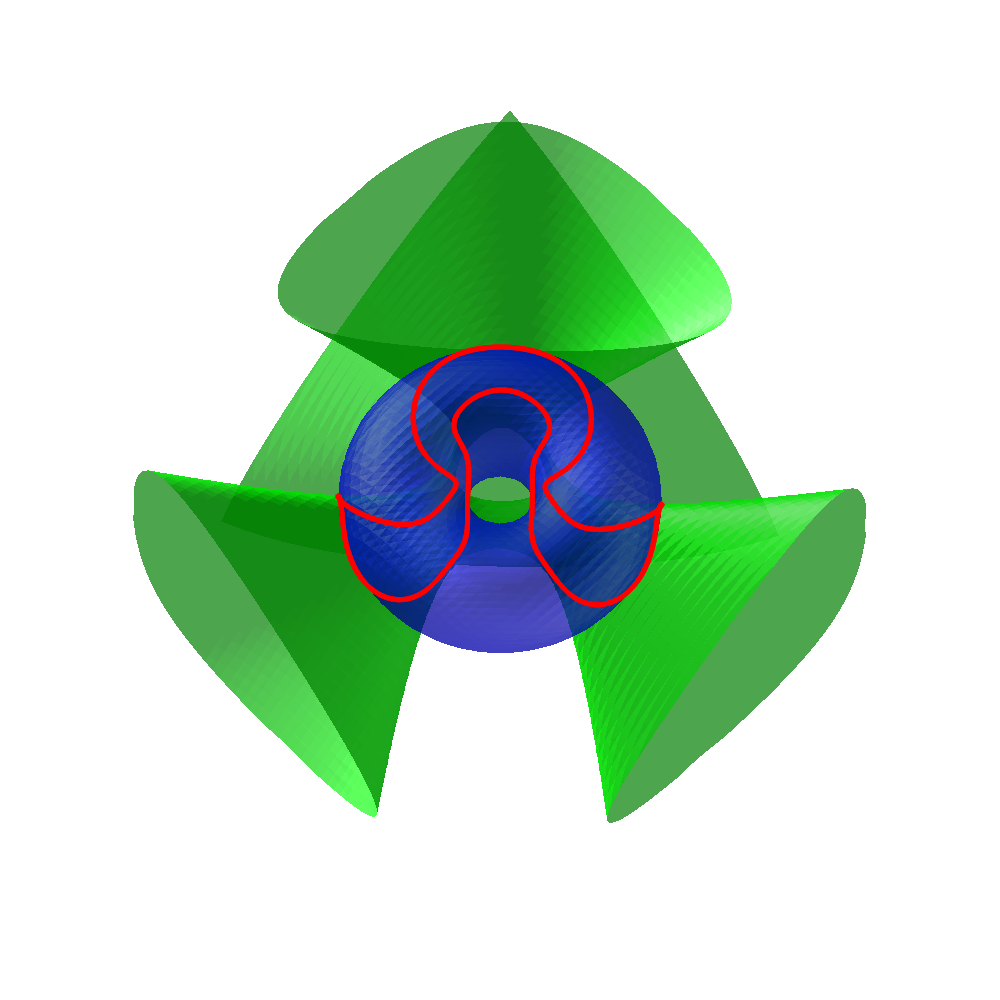}
    \caption{Curve (red) at the intersection of a torus (blue) and Clebsch surface (green).}
    \label{fig:ClebschTorus}
\end{figure}

We computed the the weak feature size for this curve
by using homotopy continuation to compute $B_k(F)$ for $k=2,3,4$. The computations indicated that the irreducible components of $B_k(G)$ not contained in $\Gamma_k$ are all isolated points.  The weak feature size is approximately 0.405, which is attained at a geometric 2-bottleneck. 
In particular, this example of computing the weak feature size is the most complicated we will consider in terms of computational cost. The cost of computing bottlenecks increases substantially with higher bottleneck order, both due to increasing the ambient dimension of $B_k(F)$ and because there are $k!$ solutions in $B_k(F)$ for each algebraic $k$-bottleneck. Regeneration methods~\cite{Regen} were used to make computations for this example more tractable. In particular, the 4-bottlenecks required approximately one week of computation on a 24-CPU computer. The table below summarizes the results.

\begin{center}
\begin{tabular}{|c|c|c|c|}
    \hline 
    & $k=2$ & $k=3$ & $k=4$ \\
    \hline
    Number of points on $B_k(F)$ computed & 2736 & 94548 & 1431936\\
    \hline
    Number of computed points in $\Gamma_k$ & 576 &  2424 & 0 \\
    \hline
    Algebraic $k$-bottlenecks of $V(F)$ & 1080 & 15354 & 59664 \\ 
    \hline
    Real algebraic $k$-bottlenecks of $V(F)$ & 68 & 324 & 586 \\ 
    \hline
    Real algebraic $k$-bottlenecks of $V(F)\cap\mr^3$ & 50 & 134 & 86 \\
    \hline
    Geometric $k$-bottlenecks of $V(F)\cap\mr^3$ & 22 & 6 & 0 \\ 
    \hline 
\end{tabular} 
\end{center}
\end{example}

\begin{example}
We conclude this collection of examples
with the quartic surface in $\mr^3$ 
from~\cite[\S5.2]{dufresne2019sampling} 
illustrated in \Cref{fig:Quartic} and defined by 
$$
F = 4x^4+7y^4+3z^4-3-8x^3+2x^2y-4x^2-8xy^2-5xy+8x-6y^3+8y^2+4y.$$ 
As in the previous examples, we computed that $V(F)$ has finitely many algebraic bottlenecks of orders $2$ and $3$. Since computing $4$-bottlenecks proved similarly expensive to \Cref{ex:clebsch}, they were not computed for this example. 

This surface exhibits interesting behavior from an algebraic viewpoint. A point $p$ approximated by $(0.458,-0.97,0)$ was computed to be the only geometric 2-bottleneck of $V(F)\cap\mr^3$ as shown in \Cref{fig:Quartic}.
The two corresponding points in $\rho_2^{-1}(p) \subseteq B_2(F)$ are isolated in the bottleneck correspondence
but are singular, i.e., have multiplicity higher than 1. 
The weak feature size of approximately $0.354$ is attained at $p$ with \Cref{fig:Quartic} also showing
the two geometric 3-bottlenecks.  The following table summarizes this computation.
\begin{center}
\begin{tabular}{|c|c|c|}
    \hline 
    & $k=2$ & $k=3$  \\
    \hline
    Number of points on $B_k(F)$ computed & 2220 & 40672 \\
    \hline
    Number of computed points in $\Gamma_k$ & 0 &  8191 \\
    \hline
    Geometric $k$-bottlenecks of $V(F)\cap\mr^3$ & 1 & 2 \\ 
    \hline 
\end{tabular} 
\end{center}

\begin{figure}
    \centering
    \includegraphics[scale=0.32]{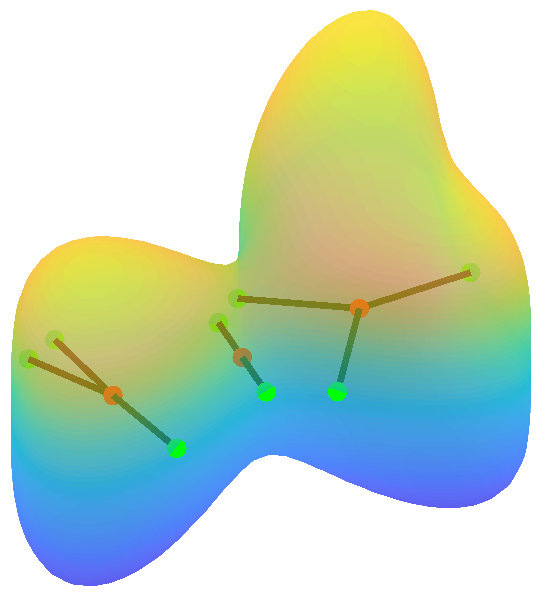}
    \caption{Quartic surface with geometric 2- and 3-bottlenecks}
    \label{fig:Quartic}
\end{figure}
\end{example}

\section{Persistent homology and adaptive sparsification} \label{sec:sparse}

Memory requirements currently comprise the most serious practical limitation for computing persistent homology \cite{roadmap}. If $\hat{X}\subseteq\mr^\numvars$ is a finite point sample, the memory required to compute the persistence diagram for the persistence module $H_\ell R_{\hat{X}}$ rapidly increases with the number of points in $\hat{X}$. One strategy to mitigate this cost is to add a ``subsampling'' step before computing persistent homology. The aim is to remove points from a sample~$\hat{X}$ using a procedure that does not substantially degrade output persistence diagrams. We can now compute both the weak feature size and local feature size of algebraic manifolds, which enables us to test algorithms on samples which fulfill theoretical density requirements. This section provides a proof-of-concept for how 
one can use our computational methods to test the behavior of geometric algorithms. 

We will consider three subsampling procedures: a uniform subsampling approach and two ``adaptive'' approaches. The latter two are based on results of Dey et al.~\cite{ddw:subsampling} and Chazal and Lieuter \cite{cl:reconstruction}, and remove points from a sample of a space $X$ based on the local feature size of $X$. More points are retained in regions where the local feature size is lower, which is a proxy for retaining more points in regions of higher curvature as illustrated in~\Cref{fig:adaptive}.

\begin{figure}
    \centering
    \includegraphics[scale=0.4]{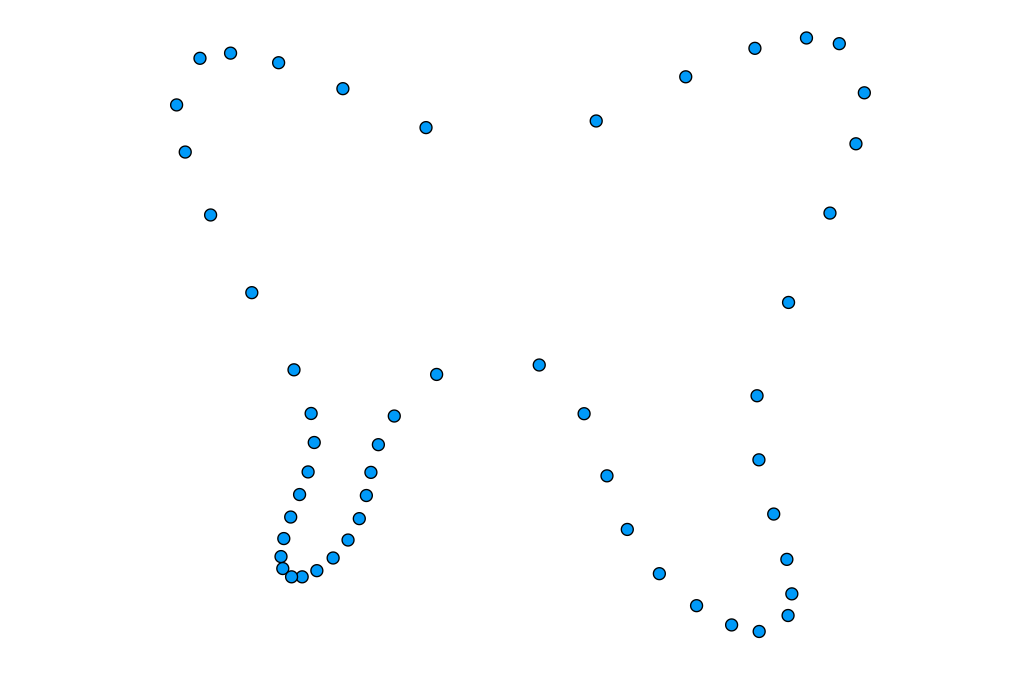}
    \includegraphics[scale=0.4]{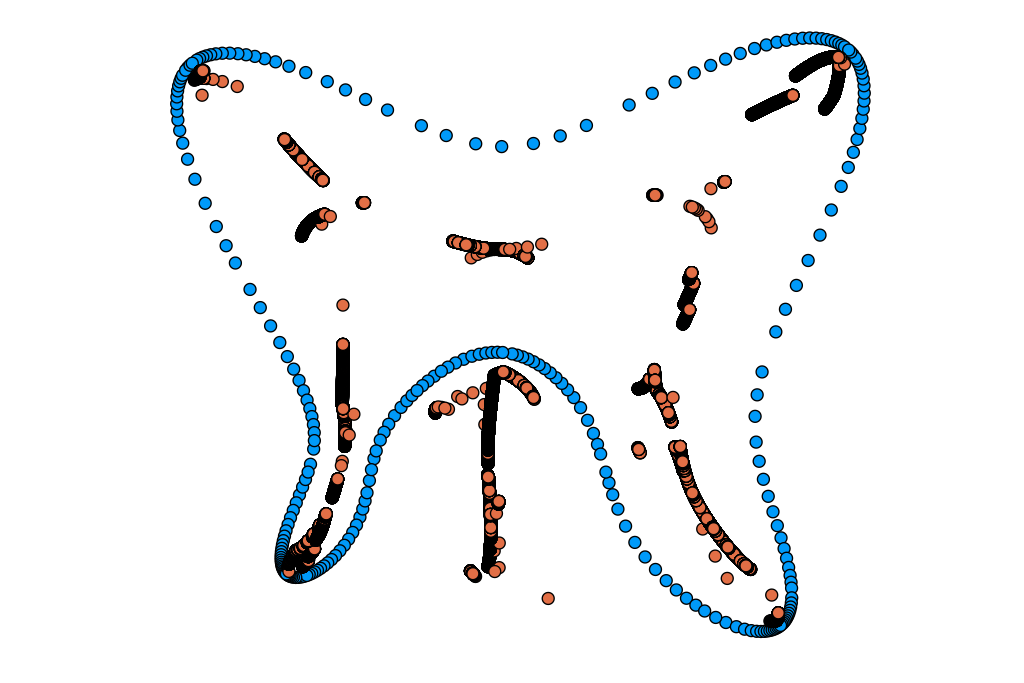}
    \caption{Adaptive samples (blue) of the butterfly curve from \Cref{ex:ButterflyAll} with respect to $\lfs$ (left) and ``lean feature size'' (right). More points are retained closer to the medial axis and ``lean medial axis'' (orange) respectively.}
    \label{fig:adaptive}
\end{figure}

\subsection{Subsampling with functions}
All three subsampling approaches we will consider fit into a greedy framework. \Cref{alg:subsampalgorithm} summarizes this framework and follows the presentation of~\cite[Alg.~1]{ddw:subsampling}.
\IncMargin{.25em}
\begin{algorithm}
    \scriptsize
	\SetKwInOut{input}{Input}\SetKwInOut{output}{Output}\SetKwFunction{Return}{Return}
	
	\input{Sample $\hat{X}\subseteq\mr^\numvars$ and function $s:\hat{X}\to\mr$}
	\output{A subsample of $\hat{X}$}
    Put $\hat{X}$ in a max priority queue sorted by $s$\;
    Set \textsc{Output} to $\emptyset$\;
    \While{The queue is not empty}{ 
        Add the highest priority $\hat{x}_1$ in the queue to \textsc{Output} and remove it from the queue\;
        Delete any point $\hat{x}_2$ from the queue where $\Vert \hat{x}_1 - \hat{x}_2 \Vert \leq s(\hat{x}_1)$\;
    }
	\Return{\textsc{Output}}
	\caption{\textsc{Subsample}}
	\label{alg:subsampalgorithm}
\end{algorithm}
\DecMargin{.25em}

\begin{example}[Uniform subsample]
Let $X = V(F)\cap\mr^\numvars$ where $V(F)$ is smooth and equidimensional. We can compute $\omega < \wfs(X)$ via homotopy continuation as we did in \Cref{sec:examples}. For any $\lambda\in [0,1]$, define $\omega_\lambda:\mr^\numvars\to\mr$ to be the constant function given by $\omega_\lambda(z) = \lambda \omega$. The output of $\textsc{Subsample}(\hat{X},\omega_\lambda)$ is a ``uniform subsample'' of $\hat{X}$. If $\hat{X}$ is a $(\delta,\mu \omega)$-sample of $X$, then $\textsc{Subsample}(\hat{X},\omega_\lambda)$ is a $(\delta,(\mu+\lambda)\omega)$-sample of $X$.
\label{ex:uniform}
\end{example}

\begin{example}[Local adaptive subsample]
With $X$ and $F$ as in the previous example and 
$\lfs$ being the local feature size function of $X$, for any $\lambda\in [0,1]$, define $\lfs_\lambda:\mr^n\to\mr_{\geq 0}$ by $\lfs_\lambda(z) = \lambda\lfs(z)$. The output of $\textsc{Subsample}(\hat{X},\lfs_\lambda)$ is an ``adaptive subsample'' of $\hat{X}$ with respect to the local feature size. In practice, we compute $\widehat{\lfs}(z) \leq \lfs(z)$ for any $z\in\hat{X}$ via homotopy continuation as in \Cref{cor:reach_and_lfs_hom}. 
\label{ex:lfs_sparse}
\end{example}

\begin{example}[Lean adaptive subsample]
Let $\hat{X}\subseteq\mr^\numvars$ be a point sample. In \cite[Def. 3]{ddw:subsampling}, Dey et al. define the $\frac{\pi}{5}$\emph{-lean set} of $\hat{X}$, $L_\frac{\pi}{5}$, to be a subset of the set of midpoints $\{\frac{p+q}{2} \}_{p\not=q \in \hat{X}}$ that also fulfills other geometric conditions (see \cite{ddw:subsampling} for details). In particular, the $\frac{\pi}{5}$-lean set of $\hat{X}$ can be computed using just the points in $\hat{X}$ as input. 
They also show that, if $\hat{X}$ is a dense sample from a manifold in an appropriate sense, then the distance function $d_{L_{\frac{\pi}{5}}}$ estimates the distance to a subset of the medial axis of $X$. For $\lambda\in [0,1]$ define $\lnfs_\lambda:\mr^\numvars\to\mr_{\geq 0}$ by $\lnfs_\lambda(z) = \lambda d_{L_{\frac{\pi}{5}}}(z)$. The output of $\textsc{Subsample}(\hat{X},\lnfs_\lambda)$ is an adaptive subsample of $\hat{X}$ with respect to the lean feature size. 
\label{ex:lnfs_sparse}
\end{example}

\subsection{Adaptive Vietoris-Rips complexes}
Standard Vietoris-Rips persistent homology is not appropriate when $\hat{X}$ is an adaptive subsample. Instead, we must use an adaptive Vietoris-Rips complex. 

\begin{definition}
Let $\hat{X}$ be a finite subset $\mr^\numvars$ and let  $r:\hat{X}\to\mr_{\geq 0}$. For any non-empty $\sigma\subseteq \hat{X}$, define $\diam_r(\sigma) = \max_{p,q\in\sigma} \frac{\Vert p - q \Vert}{r(p) + r(q)}$. Then, for $t\geq 0$, the \emph{$r$-adaptive Vietoris-Rips complex} of $\hat{X}$ at threshold $t$ is the abstract simplicial complex
\[
R_{\hat{X},r}(t) := \{ \sigma\not=\emptyset \subseteq \hat{X} \mid \diam_r(\sigma) \leq t \}\text{.}
\]
\end{definition}

When $r$ is the constant function with output $1/2$, $R_{\hat{X},r}$ is the standard Vietoris-Rips complex. We will use $r=\lfs$ when $\hat{X}$ is an adaptive subsample with respect to $\lfs$ and similarly for $\lnfs$. Note that computing the persistence diagram for $H_\ell R_{\hat{X},r}$ is straightforward with standard software, as we can provide input in the form of a matrix for the function $M:\hat{X}\times\hat{X}\to\mr$ given by $M(p,q) = \frac{\Vert p - q\Vert}{r(p) + r(q)}$.

We can now consider a persistent homology pipeline with subsampling that has two parameters, $\mu$ and $\lambda$. Assume that $\delta > 0$ is fixed.

\begin{algorithm}[!h]
    \scriptsize
	\SetKwInOut{input}{Input}\SetKwInOut{output}{Output}\SetKwFunction{Return}{Return}
	
	\input{A polynomial system $F$ defining a smooth and equidimensional algebraic variety with $X = V(F)\cap\mr^n$ compact and a degree of homology $\ell \geq 0$}
	\input{A sampling density $\mu > 0$ and subsampling proportion $\lambda \geq 0$}
	\input{A family $\mathcal{S}$ of subsampling functions with a function $s_\lambda:\mr^\numvars\to\mr_{\geq 0}$ for each $\lambda\in\mr_{\geq 0}$ and a radius function $r:\mr^\numvars\to\mr$ }
	\output{A persistence diagram}
    Compute $0 < T < \wfs(X)$ or $0 < T < \reach(X)$\;
    Compute using $T$ a $(\delta,\mu T)$-sample of $X$, $\hat{X}_0$ \;
    Set $\hat{X}$ to \textsc{Subsample}$(\hat{X}_0,s_\lambda)$ \;
    Compute and return the Vietoris-Rips persistence diagram of $H_\ell R_{\hat{X},r}$
	\caption{\textsc{PH with Subsampling}}
	\label{alg:subph}
\end{algorithm}
\DecMargin{.25em}

To justify using adaptive complexes for persistence computations, we would like analogs to the Homology Inference Theorem. For $r=\lnfs$, this is provided in~\mbox{\cite[Thm. 3.1]{ddw:subsampling}}. For $r=\lfs$, 
this follows essentially from a result of Chazal and Lieutier~\cite[Thm. 6.2]{cl:reconstruction}, but requires additional modifications. 
Since proving these modifications work is technical and the arguments are mostly standard, this is left to \Cref{asec:inference}.

\subsection{Computational results from the butterfly curve}
The following table summarizes valid parameters for homology inference with \Cref{alg:subph} computed for the butterfly curve from \Cref{ex:ButterflyAll}
based on applicable homology inference theorems. The sampling density condition for homology inference in \cite{ddw:subsampling} is difficult to compute as its relationship to the reach or weak feature size is not obvious. The sampling density $\mu$ for $\lnfs$ is therefore the same as for $\widehat{\lfs}$ and the entries otherwise follow those in~\cite{ddw:subsampling}.
\begin{table}[H]
\begin{center}
\begin{tabular}{|c|c|c|c|c|c|c|}
    \hline 
    $\mathcal{S}$ & r & $\mu$ & $\lambda$ & T &  a & b  \\
    \hline
    $w_\lambda$, \Cref{ex:uniform} & 1/2 & 0.15 & 0.15 &  $\wfs$ & 0.0753 & 0.348 \\
    \hline 
    $\widehat{\lfs}_\lambda$, \Cref{ex:lfs_sparse} & $\widehat{\lfs}$ & 0.0046 & 0.0019 & $\reach$ & 0.0111 & 0.023 \\
    \hline 
    $\lnfs_\lambda$, \Cref{ex:lnfs_sparse} & $\lnfs$ & 0.0046 & 0.009 & $\reach$ & 0.018 & 0.108 \\
    \hline
\end{tabular} 
\caption{Input parameters for \Cref{alg:subph} which compute a persistence diagram where points above and to the left of the indicated $(a,b)$ count the $\ell^\text{th}$ Betti number of the butterfly curve.}
\label{tab:parameters}
\end{center}
\end{table}

To compare these three methods it is natural to first compute a sample $\hat{X}$ as in \Cref{alg:subph} with the parameter values in the above Table, compute samples and persistence diagrams for $\textsc{Subsample}(\hat{X},s_\lambda)$ while varying the subsampling parameter $\lambda$ between $0$ and~$1$, and compare outputs. When $\lambda = 0$, the samples fulfill homology inference conditions and the persistence diagrams degrade as $\lambda$ increases to $1$ since more points are removed.

To be definite, for any fixed row in \Cref{tab:parameters}, let $\hat{X}$ be a sample of the butterfly curve computed by \Cref{alg:subph} with those parameters. Then, for any $\lambda\in [0,1]$, denote by $\hat{X}_\lambda$ the output of $\textsc{Subsample}(\hat{X},s_\lambda)$ and by $D_\lambda$ the persistence diagram of $H_1R_{\hat{X}_\lambda,r}$. Consider the following scores which summarize these outputs: 
\begin{enumerate}
    \item (Computational cost score) The score for $\lambda$ is the number of points $\# \hat{X}_\lambda$. Lower numbers of points are more desirable for computations.
    \item (Homology inference score) The butterfly curve has $\beta_1 = 1$. This score measures the ease of estimating that $\beta_1=1$ rather than $\beta_1 > 1$ using the persistence diagram~$D_\lambda$. For any $\lambda$, let $\text{pers}_{\lambda,1}$ be the largest value in $\{ d - b \}_{(b,d)\in D_\lambda}$ and $\text{pers}_{\lambda,2}$ be the second largest value. Set $M_\lambda = \text{pers}_{\lambda,1} - \text{pers}_{\lambda,2}$. The homology inference score for sparsification level~$\lambda$ is $\frac{M_\lambda}{M_0}$.  The score is between $0$ and $1$, with a higher score indicating a better persistence diagram for homology inference.
    \item (Wasserstein score) The $2$-Wassertein distance, denoted $W_2$, is a standard metric\footnote{More precisely in our setting, an \emph{extended} metric, which means the distance between two diagrams may be $\infty$. Readers familiar with algebraic approaches to persistent homology may find it useful to note that the persistence diagrams in this paper are restricted: they contain finitely many points and do not include points on the diagonal.} for persistence diagrams (e.g., \cite[p. 183]{HarerBook}). The Wasserstein score for $\lambda$ is $\frac{W_2(D_0,D_\lambda)}{W_2(D_0,\emptyset)}$. Lower scores correspond to higher quality persistence diagrams.
\end{enumerate}
\begin{figure}[!b]
    \centering
    \includegraphics[scale=0.34]{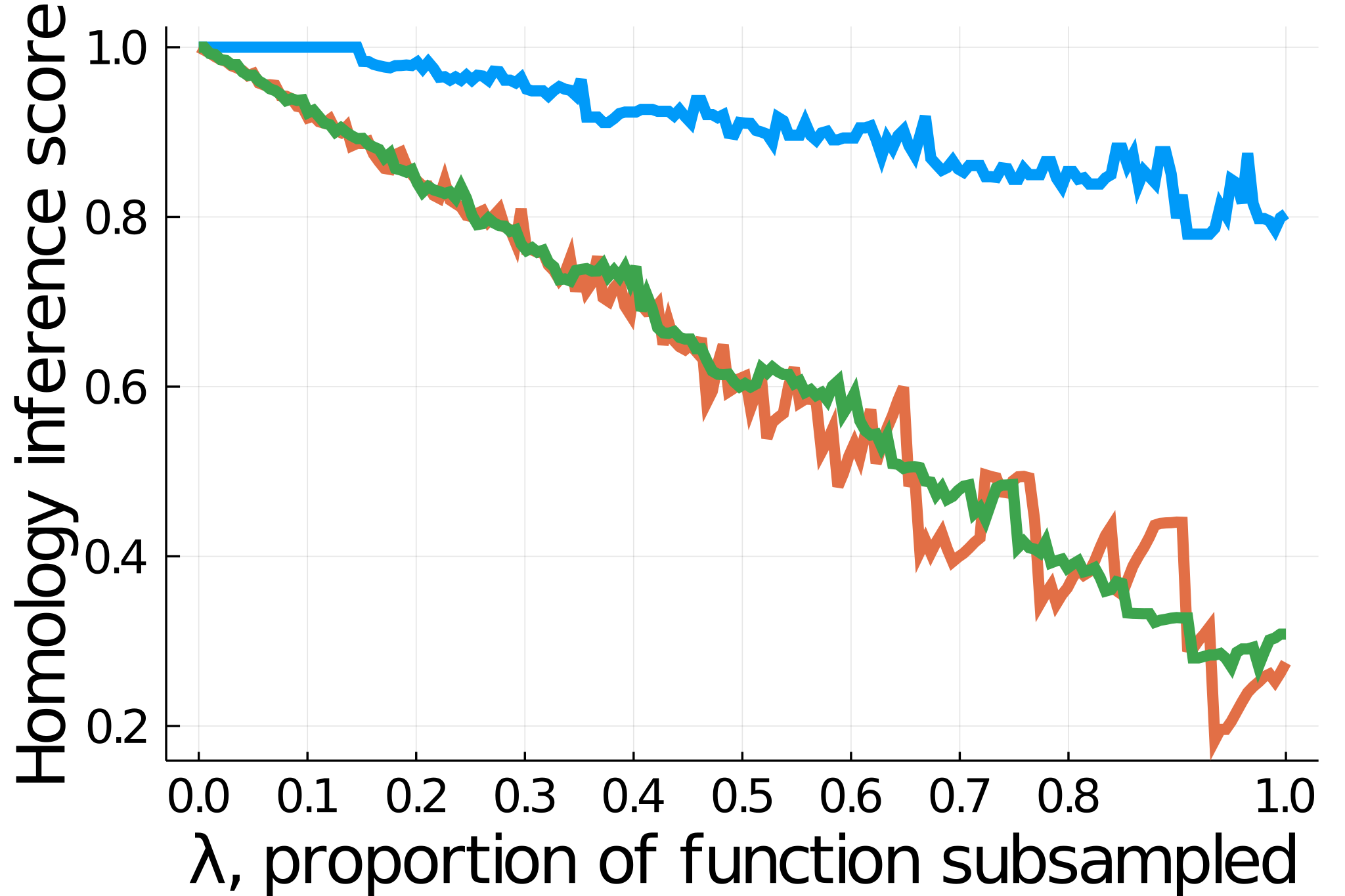}
    \includegraphics[scale=0.34]{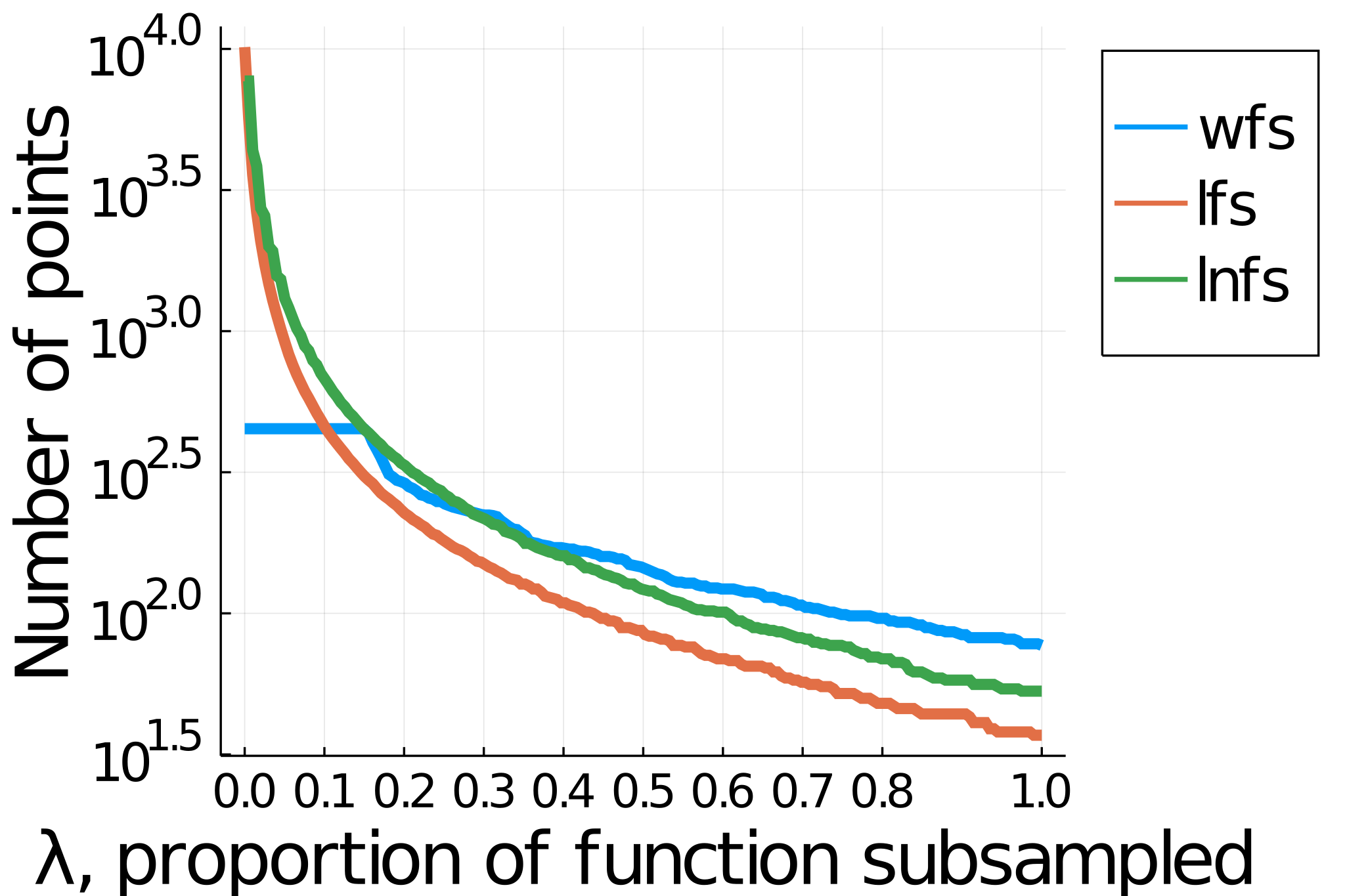}
    \includegraphics[scale=0.34]{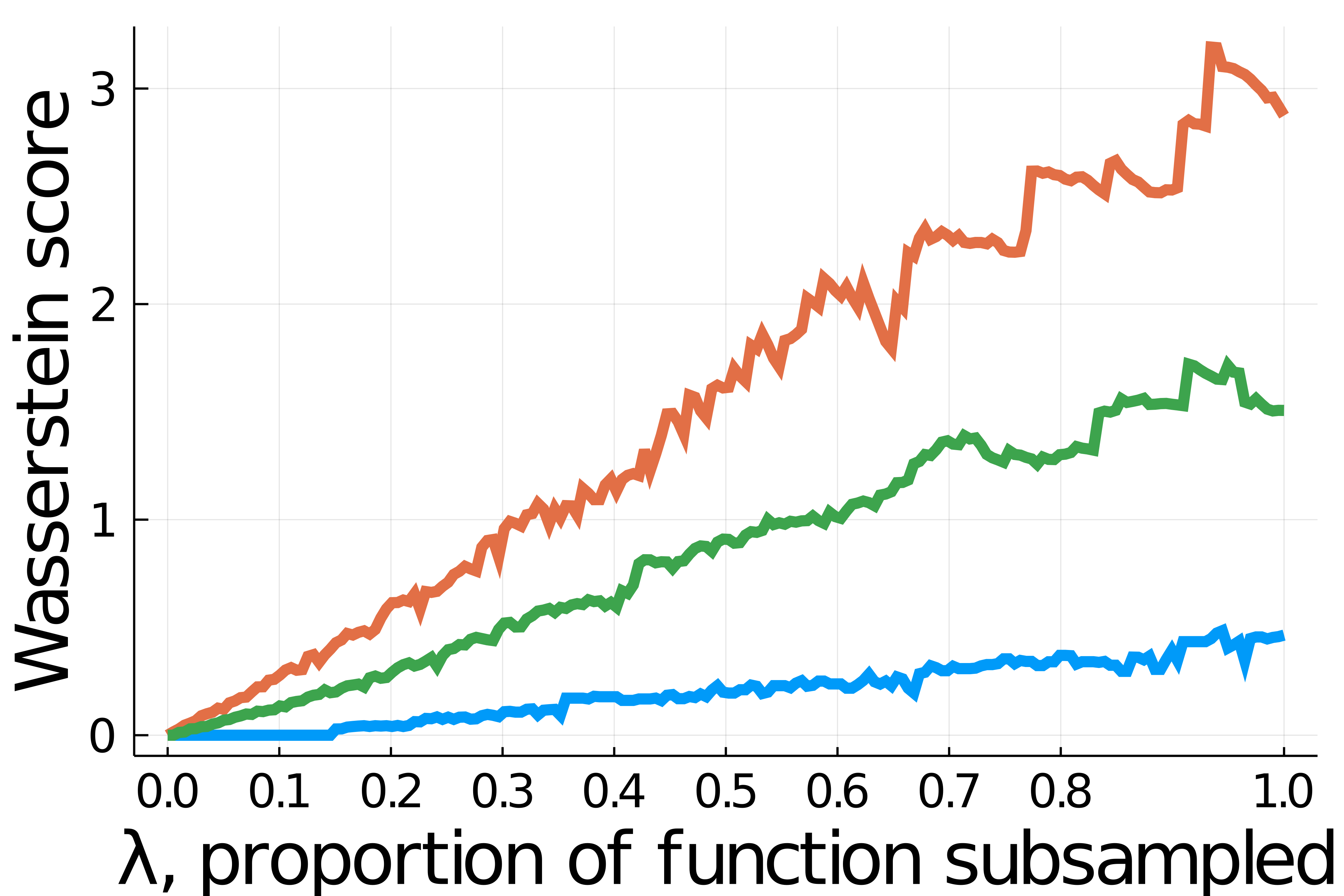}
    \caption{Computational results comparing the behavior of subsampling methods.}
    \label{fig:butterfly_sparse_numpoints}
\end{figure}

\Cref{fig:butterfly_sparse_numpoints} records results from experiments that sampled the butterfly curve and computed the scores above. In particular, 
the $\lnfs$-adaptive subsampling exhibits comparable performance to the $\lfs$-adaptive subsampling method it approximates. Both of these adaptive methods produce subsamples with fewer points than uniform subsampling across a substantial range of subsampling thresholds. We can also see that the persistent homology outputs were less sensitive to the subsampling threshold when conducting uniform subsampling.

\section{Conclusion}\label{sec:Conclusion}
In this paper, we developed theoretical foundations and numerical algebraic geometry methods for computing geometric feature sizes of algebraic manifolds. We also demonstrated how to combine these methods with persistent homology for both homology inference and for testing geometric algorithms. This study is not intended to be exhaustive, so some further questions both in terms of theory and applications follow. 

\noindent \textbf{Real algebraic spaces with singularities.} It is natural to ask how the results presented here may generalize to singular spaces. 
Since isolated singularities can contribute additional irreducible components to $B_k(F)$, 
the impact of singularities must be analyzed.

\noindent \textbf{Counting algebraic bottlenecks.} A direct consequence of \Cref{thm:finitely_many_bottlenecks}, which will be familiar to readers who have worked with parameter homotopies, is that, for a fixed degree pattern, there exist upper bounds on the number of algebraic (and so geometric) bottlenecks that apply for any generic algebraic manifold with that degree pattern. Computing sharp upper bounds, however, is an open problem of more than intrinsic interest. As an example of the geometric meaning of these bounds, consider a compact algebraic hypersurface $H\subseteq\mr^\numvars$, not necessarily smooth, e.g., the discriminant locus of a parameterized family. 
Thus, $\mr^\numvars\setminus H$ decomposes into a finite number of disconnected $\numvars$-cells and the number of geometric bottlenecks of $H$ 
is an upper bound on the number of cells. 
Altogether, having good bounds on this number both for algebraic manifolds and for singular algebraic spaces could be useful for geometric algorithms which look to estimate the number and size of these cells.

\noindent \textbf{Algebraic models and persistent homology.} In \Cref{sec:lfs_compute}, we saw an application where feature sizes are computed to construct samples of an algebraic manifold $X$ for analysis via Vietoris-Rips persistent homology. Instead, consider the persistence module obtained by thickening the space being sampled, i.e., the persistence module $H_\ell X^\bullet$. Its persistence diagram is constrained by the critical values of $d_X$, which we can now compute. That persistence diagram in turn constrains the persistence diagram obtained from Vietoris-Rips persistent homology of a sample~\cite{harker2019comparison}. Can these constraints be leveraged to reduce the cost of persistence computations from a sample?

\noindent \textbf{Reducing redundant computations.}
For any polynomial system $F$, there is an action of the symmetric group on $k$ elements on the bottleneck correspondence $B_k(F)$.  Namely, a permutation acts on an element $(x_1,\dots,x_k,t_1,\dots,t_k)$ by permuting both the $x_i$ and~$t_i$. In the generic case when all non-degenerate solutions are isolated, standard homotopy continuation methods whose results we saw in this manuscript compute $k!$ solutions in $B_k(F)$ for each algebraic bottleneck. 
Is there a natural approach, e.g., building on methods utilized in~\mbox{\cite{SymmetricTensor,SymmetricHomotopy,NinePoint}}, 
that takes advantage of the symmetry to reduce these redundancies?
 
\section*{Acknowledgements}
PBE thanks Antonio Lerario for interesting discussions, and both Parker Ladwig and Robert Goulding for help tracking down and verifying Cassini's study of his eponymous ovals.
JDH was supported in part by NSF grant CCF-181274. OG was supported in part by EPSRC EP/R018472/1 and Bristol Myers Squibb. For the  purpose of Open Access, the author has applied a CC BY public copyright licence to any Author Accepted Manuscript (AAM) version arising from this submission.

%\bibliographystyle{abbrv}
%\bibliography{references.bib}

\appendix
\section{Appendix}\label{appendixA}

\subsection{Proof of \texorpdfstring{\Cref{thm:hom_inference}}{Theorem~2.11}}
\begin{theorem}
Let $A_\numvars = \sqrt{\frac{2\numvars}{\numvars+1}}$, let $\hat{X}$ be a $(\delta,\epsilon)$-sample of a real semialgebraic set $X$ in $\mr^\numvars$, and let $\delta' = 2\epsilon(A_n^2 -1)+\delta A_n$. If $2(\epsilon + \delta') < \wfs(X)$ then for $a(\delta,\epsilon) = 2\epsilon$ and $b(\delta,\epsilon) = 2(2\epsilon A_n + \delta)$ the Betti number $\beta_\ell(X)$ is the rank of the map obtained from applying $H_\ell$ to the inclusion map $R_{\hat{X}}(a(\delta,\epsilon)) \subseteq R_{\hat{X}}(b(\delta,\epsilon))$.
\end{theorem}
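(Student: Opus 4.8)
The plan is to interleave three persistence modules---the Vietoris--Rips module $H_\ell R_{\hat X}$ of $\hat X$, the \v{C}ech module $H_\ell C_{\hat X}$ of $\hat X$, and the offset module $H_\ell X^{\bullet}$ of $X$---and then to read off $\beta_\ell(X)$ by a sandwiching lemma, in the spirit of Chazal and Lieutier's proof of the \v{C}ech Homology Inference Theorem (\Cref{thm:cech_hom_inf}). Three facts drive the argument. First, by the (functorial) Nerve Lemma, $H_\ell C_{\hat X}\cong H_\ell \hat X^{\bullet}$ as persistence modules, since every $\hat X^r$ is a finite union of closed balls whose nonempty finite intersections are convex, hence contractible. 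Second, by \cite[Thm.~2.5]{coveragetop} (or directly: $C_{\hat X}(s)\subseteq R_{\hat X}(2s)$ is elementary and $R_{\hat X}(r)\subseteq C_{\hat X}(A_\numvars r/2)$ is Jung's theorem, using $A_\numvars/2=\sqrt{\numvars/(2(\numvars+1))}$), for a finite point set in $\mr^\numvars$ one has the subcomplex containments $C_{\hat X}(s)\subseteq R_{\hat X}(2s)$ and $R_{\hat X}(r)\subseteq C_{\hat X}(A_\numvars r/2)$ for all $r,s\geq 0$. Third, the $(\delta,\epsilon)$-sample hypothesis gives $X\subseteq \hat X^\epsilon$ and $\hat X^r\subseteq X^{r+\delta}$, hence also $X^t\subseteq \hat X^{t+\epsilon}$, for all $r,t\geq 0$.

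Combining these, I would assemble the following sequence of maps on $H_\ell$, each induced by an inclusion of complexes/spaces or by the Nerve equivalence, with $a=2\epsilon$ and $b=4\epsilon A_\numvars+2\delta$:
\[ H_\ell X \;\longrightarrow\; H_\ell R_{\hat X}(a) \;\longrightarrow\; H_\ell X^{\,\epsilon A_\numvars+\delta} \;\longrightarrow\; H_\ell R_{\hat X}(b) \;\longrightarrow\; H_\ell X^{\,2\epsilon A_\numvars^2+\delta(A_\numvars+1)}. \]
The first arrow is $H_\ell X\to H_\ell\hat X^{\epsilon}\cong H_\ell C_{\hat X}(\epsilon)\to H_\ell R_{\hat X}(2\epsilon)$ (using $X\subseteq\hat X^\epsilon$ and $C_{\hat X}(\epsilon)\subseteq R_{\hat X}(2\epsilon)$); the second is $H_\ell R_{\hat X}(2\epsilon)\to H_\ell C_{\hat X}(\epsilon A_\numvars)\cong H_\ell\hat X^{\epsilon A_\numvars}\to H_\ell X^{\epsilon A_\numvars+\delta}$; the third is $H_\ell X^{\epsilon A_\numvars+\delta}\to H_\ell\hat X^{\epsilon A_\numvars+\delta+\epsilon}\cong H_\ell C_{\hat X}(\epsilon A_\numvars+\delta+\epsilon)\to H_\ell R_{\hat X}(2\epsilon A_\numvars+2\delta+2\epsilon)\to H_\ell R_{\hat X}(b)$; and the fourth is $H_\ell R_{\hat X}(b)\to H_\ell C_{\hat X}(2\epsilon A_\numvars^2+\delta A_\numvars)\cong H_\ell\hat X^{2\epsilon A_\numvars^2+\delta A_\numvars}\to H_\ell X^{2\epsilon A_\numvars^2+\delta(A_\numvars+1)}$. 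Every parameter comparison needed to invoke the \v{C}ech--Rips containments here reduces to $1\leq A_\numvars\leq\sqrt{2}$ (e.g.\ $2\epsilon A_\numvars+2\delta+2\epsilon\leq b$ is $2\epsilon\leq 2\epsilon A_\numvars$, and $A_\numvars\cdot 2\epsilon/2=\epsilon A_\numvars$). Threading the honest space inclusions $\hat X^{\epsilon A_\numvars}\subseteq X^{\epsilon A_\numvars+\delta}\subseteq\hat X^{\epsilon A_\numvars+\delta+\epsilon}$ (and similarly $X\subseteq\hat X^\epsilon\subseteq\hat X^{\epsilon A_\numvars}\subseteq X^{\epsilon A_\numvars+\delta}$, etc.)\ and using functoriality of the Nerve Lemma, the composite of the first two arrows is $H_\ell(X\hookrightarrow X^{\epsilon A_\numvars+\delta})$, the composite of the last two is $H_\ell(X^{\epsilon A_\numvars+\delta}\hookrightarrow X^{2\epsilon A_\numvars^2+\delta(A_\numvars+1)})$, and the composite of the middle two is exactly $H_\ell\big(R_{\hat X}(a)\hookrightarrow R_{\hat X}(b)\big)$.

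Now observe the elementary inequality $2\epsilon A_\numvars^2+\delta(A_\numvars+1)\leq 2(\epsilon+\delta')$, equivalent to $0\leq 2\epsilon(A_\numvars^2-1)+\delta(A_\numvars-1)$ and hence true since $A_\numvars\geq 1$ (recall $\delta'=2\epsilon(A_\numvars^2-1)+\delta A_\numvars$); in particular $0<\epsilon A_\numvars+\delta\leq 2\epsilon A_\numvars^2+\delta(A_\numvars+1)<\wfs(X)$ under our hypothesis. By \Cref{thm:basic_props} (first and third bullets, using that a compact semialgebraic set is an ANR), $X$ is a deformation retract of each thickening $X^\eta$ with $0<\eta<\wfs(X)$, so the two outer composites displayed above are isomorphisms. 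Finally, apply the standard sandwiching lemma: if $W_1\to W_2\to W_3\to W_4\to W_5$ are linear maps with $W_1\to W_3$ and $W_3\to W_5$ isomorphisms, then $W_2\to W_4$ has rank $\dim W_3$ (since then $W_2\to W_3$ is surjective and $W_3\to W_4$ injective). With $W_2=H_\ell R_{\hat X}(a)$, $W_3=H_\ell X^{\epsilon A_\numvars+\delta}$, $W_4=H_\ell R_{\hat X}(b)$, this yields $\rank H_\ell\big(R_{\hat X}(a)\hookrightarrow R_{\hat X}(b)\big)=\dim H_\ell X^{\epsilon A_\numvars+\delta}=\beta_\ell(X)$, the last equality because $X^{\epsilon A_\numvars+\delta}\simeq X$; since $a=a(\delta,\epsilon)=2\epsilon$ and $b=b(\delta,\epsilon)=2(2\epsilon A_\numvars+\delta)$, this is the assertion.

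The main obstacle is purely organizational: one must check, step by step, that the \v{C}ech--Rips containments apply with the stated radii and that the largest offset reached, $2\epsilon A_\numvars^2+\delta(A_\numvars+1)$, stays below $\wfs(X)$; and one must invoke a functorial form of the Nerve Lemma so that the composites passing through \v{C}ech complexes coincide on $H_\ell$ with the honest inclusion-induced maps. I would route the argument through the offset filtration $X^{\bullet}$ as above---rather than quoting \Cref{thm:cech_hom_inf} verbatim---because that theorem outputs a rank rather than an isomorphism, whereas the sandwiching lemma needs genuine isomorphisms at the outer positions, and the inclusions among offsets of $X$ supply them directly via \Cref{thm:basic_props}.
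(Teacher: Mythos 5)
Your proof is correct. It arrives at the same conclusion using an argument that is structurally parallel to the paper's but unpacks one more layer of the machinery. The paper's proof sets up the chain of simplicial inclusions
\[
C_{\hat{X}}(\epsilon) \subseteq R_{\hat{X}}(2\epsilon) \subseteq C_{\hat{X}}(\epsilon A_\numvars) \subseteq C_{\hat{X}}(2\epsilon A_\numvars + \delta) \subseteq R_{\hat{X}}(2(2\epsilon A_\numvars + \delta)) \subseteq C_{\hat{X}}(2\epsilon + \delta')
\]
and then, after applying $H_\ell$, invokes \Cref{thm:cech_hom_inf} twice as a black box: once on the ``internal'' \v{C}ech inclusion $C_{\hat{X}}(\epsilon A_\numvars)\subseteq C_{\hat{X}}(2\epsilon A_\numvars+\delta)$ (noting $\hat{X}$ is a $(\delta,\epsilon A_\numvars)$-sample) and once on the ``external'' one $C_{\hat{X}}(\epsilon)\subseteq C_{\hat{X}}(2\epsilon+\delta')$ (noting $\hat{X}$ is a $(\delta',\epsilon)$-sample), each yielding rank exactly $\beta_\ell(X)$; the factoring of the Rips inclusion through the internal one bounds its rank above by $\beta_\ell(X)$, and the factoring of the external one through the Rips inclusion bounds it below. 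You instead re-derive the content of \Cref{thm:cech_hom_inf} in place, routing through the offset filtration $X^\bullet$ via the functorial Nerve Lemma and using \Cref{thm:basic_props} to turn the outer composites into genuine isomorphisms $H_\ell(X\hookrightarrow X^{\epsilon A_\numvars+\delta})$ and $H_\ell(X^{\epsilon A_\numvars+\delta}\hookrightarrow X^{2\epsilon A_\numvars^2+\delta(A_\numvars+1)})$, after which a clean sandwiching lemma finishes. The two approaches therefore share the same \v{C}ech--Rips containment chain and the same sandwiching logic, but sandwich by different means: the paper bounds the rank from above and below by citing \Cref{thm:cech_hom_inf}, while you pin the middle module to an isomorphic copy of $H_\ell(X)$. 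Your version is more self-contained (and makes explicit that only $2\epsilon A_\numvars^2 + \delta(A_\numvars+1) < \wfs(X)$, which is $\leq 2(\epsilon+\delta')$, is actually needed), at the cost of carrying the bookkeeping for the functorial Nerve Lemma yourself; the paper's version is shorter because that bookkeeping is already packaged inside the cited theorem.
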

\begin{proof}
We have the following chain of simplicial inclusions: 
\[
C_{\hat{X}}(\epsilon) \subseteq R_{\hat{X}}(2\epsilon) \subseteq C_{\hat{X}}(\epsilon A_n) \subseteq C_{\hat{X}}(2\epsilon A_n + \delta) \subseteq R_{\hat{X}}(2(2\epsilon A_n + \delta))
\]
\[
 \subseteq C_{\hat{X}}(A_n(2\epsilon A_n + \delta)) = C(2\epsilon + \delta')
\]
where inclusions between \u{C}ech and Vietoris-Rips complexes follow from de Silva and Ghrist's Theorem \cite{coveragetop}. Note that since $\hat{X}$ is a $(\delta,\epsilon)$-sample of $X$, it is also a $(\delta',\epsilon)-$sample of $X$ and a $(\delta,\epsilon A_n)-$sample of $X$ because $\delta' \geq \delta$ and $\epsilon A_n > \epsilon$. Also note that $2(\epsilon A_n+\delta) < 2(\epsilon+\delta') < \wfs(X)$, the first inequality being easy to verify given $A_n > 1$ and the second inequality having been assumed. Applying $H_\ell$ to the sequence, the linear maps induced by both the internal and external inclusions of \u{C}ech complexes have rank $\beta_\ell(X)$ by \Cref{thm:cech_hom_inf}. By properties of linear maps, these inclusions give an upper and lower bound, respectively, of $\beta_\ell(X)$ for $\rank(H_\ell R_{\hat{X}})(a(\delta,\epsilon),b(\delta,\epsilon))$. 
\end{proof}

\subsection{Homology inference and subsampling}
\label{asec:inference}
Recall that for a set $X$ in $\mr^\numvars$ and $z\in\mr^\numvars$, $\pi_X(z)$ denotes the set of points in $X$ with minimum distance to $z$. Abusing notation, let $\lfs(\pi_X(z))$ denote $\inf_{x\in\pi_X(z)} \lfs(x)$. This is always a minimum for closed $X$ because $\lfs$ is continuous, in fact 1-Lipschitz continuous. 
\begin{definition} (\cite{cl:reconstruction})\label{def:adapt}
A finite subset $\hat{X}$ of $\mr^\numvars$ is an \emph{adaptive-$(\alpha,\beta)$ sample} of a compact manifold $X$ for $\alpha,\beta > 0$ if 
\begin{itemize}
    \item for any $\hat{x} \in \hat{X}$, $d_X(\hat{x}) < \alpha\beta\lfs(\pi_X(\hat{x}))$, and
    \item for all $x\in X$, there is $\hat{x}\in\hat{X}$ such that $d(x,\pi_X(\hat{x})) < \beta\lfs(\pi_X(\hat{x}))$.
\end{itemize}
\end{definition}
\begin{definition}(\cite{cl:reconstruction}) Suppose $0 < \kappa$, $X$ is a compact subspace of $\mr^\numvars$, and $P$ is a finite subset of $\mr^\numvars$. For any $z\in\mr^\numvars$ let $\lfs^\pi_\kappa(z) = \kappa\lfs(\pi_X(z))$. Denote by $K_{P,\lfs^\pi}(\kappa)$ the union of~balls 
\[
\cup_{z\in P} \overline{B}_z(\lfs^\pi_\kappa(z))
\]
and denote by $C_{P,\lfs^\pi}$ the functor $\RR\to\simp$ where $C_{P,\lfs^\pi}(\kappa)$ is the nerve of $\{\overline{B}_z(\lfs^\pi_\kappa(z))\}_{z\in P}$.\label{def:lfs_balls}
\end{definition}

\begin{theorem} (Chazal and Lieutier~\cite[Thm. 6.2]{cl:reconstruction}) Suppose $\hat{X}$ is an $(\alpha,\beta)-$adaptive sample of a smooth 
and compact manifold $X$ in $\mr^\numvars$. There exist functions $g,h:\mr^4\to\mr$ where, if $0 < a < b < \frac{1}{3} - \alpha\beta$ and $g(\alpha,\beta,a,b) < h(\alpha,\beta,a,b)$, then $X$ is a deformation retract of $K_{\hat{X},\lfs^\pi}(\kappa)$ for $\kappa\in [a,b]$.
\label{thm:adapt_hom_inference}
\end{theorem}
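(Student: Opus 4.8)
The plan is to recognize $K_{\hat X,\lfs^\pi}(\kappa)$ as a sublevel set of an adaptively weighted distance function and then contract that sublevel set onto $X$ by a flow argument in the style of critical point theory for distance functions (the same circle of ideas recalled in \Cref{sec:background} and \Cref{thm:basic_props}). First I would rewrite the union of balls as
$K_{\hat X,\lfs^\pi}(\kappa)=\phi^{-1}\bigl([0,\kappa]\bigr)$, where $\phi:\mr^\numvars\to\mr_{\geq 0}$ is the weighted distance $\phi(y)=\min_{z\in\hat X} d(y,z)/\lfs(\pi_X(z))$, matching \Cref{def:lfs_balls}. This $\phi$ is proper and locally Lipschitz, and its generalized gradient obeys the same critical-point calculus as the ordinary distance functions of \Cref{def:wfs}; the threshold functions $g,h:\mr^4\to\mr$ in the statement will ultimately be the explicit quantities that make the estimates below mutually compatible.

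The second step is purely metric. The first adaptive condition gives $d_X(z)<\alpha\beta\,\lfs(\pi_X(z))$ for every $z\in\hat X$, so for $y\in K_{\hat X,\lfs^\pi}(\kappa)$ with witnessing $z$ one has $d_X(y)\leq d(y,z)+d_X(z)<(\kappa+\alpha\beta)\,\lfs(\pi_X(z))$. Combining this with the $1$-Lipschitz continuity of $\lfs$ (recorded just before \Cref{def:adapt}) and with $\kappa\leq b<\tfrac13-\alpha\beta$ yields a bound of the form $d_X(y)<c(\alpha,\beta,\kappa)\,\lfs(\pi_X(y))$ with $c$ bounded away from the critical constant that would spoil the nearest-point projection. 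Symmetrically, the covering condition produces, for each $x\in X$, a point $\hat x\in\hat X$ with $d(x,\hat x)\leq\beta(1+\alpha)\,\lfs(\pi_X(\hat x))$, hence $X\subseteq K_{\hat X,\lfs^\pi}(a)$ once $a$ exceeds the corresponding threshold. The inequality $g(\alpha,\beta,a,b)<h(\alpha,\beta,a,b)$ should be read as the simultaneous demand that $a$ be large enough for this covering inclusion and $b$ small enough for the displacement bound, so that $X\subseteq K_{\hat X,\lfs^\pi}(a)\subseteq K_{\hat X,\lfs^\pi}(\kappa)\subseteq K_{\hat X,\lfs^\pi}(b)$ for all $\kappa\in[a,b]$.

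The third and decisive step is to build the strong deformation retraction of $K_{\hat X,\lfs^\pi}(\kappa)$ onto $X$. Since $\lfs(\pi_X(y))$ can greatly exceed $\reach(X)$ for $y$ near $X$, one cannot contract globally along the nearest-point projection $\pi_X$; instead I would integrate a continuous vector field on $K_{\hat X,\lfs^\pi}(\kappa)\setminus X$ that moves toward $X$ at a rate proportional to the local scale $\lfs\circ\pi_X$ — equivalently, a partition-of-unity reparametrization of Grove's flow for $d_X$. One then checks, using the displacement bound, that (i) $\phi$ has no critical value between the $X$-level and $\kappa$, so this flow has no equilibria off $X$; (ii) trajectories stay inside $K_{\hat X,\lfs^\pi}(\kappa)$ because along them $d_X$ strictly decreases while $\lfs\circ\pi_X$ varies slowly; and (iii) every trajectory reaches $X$ in finite rescaled time. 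Patching these with the standard fibration/deformation lemma for flows without equilibria gives the retraction simultaneously for all $\kappa\in[a,b]$.

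I expect the main obstacle to be step three, specifically verifying (ii): that the contracting flow never escapes the union of balls. The difficulty is exactly the non-uniformity — the radii $\kappa\,\lfs(\pi_X(z))$ differ from ball to ball and are not comparable to $\reach(X)$ — so one must control the oscillation of $\lfs\circ\pi_X$ along flow lines and show the inward flux dominates it, which is what forces the constant $c(\alpha,\beta,\kappa)$ to stay below $\tfrac13$ and what pins down $g$ and $h$. The remaining ingredients (properness of $\phi$, absence of critical values in the annular range, continuity of the adapted flow, and the concluding deformation-retract lemma) are routine consequences of the critical-point theory for distance functions already invoked in this paper.
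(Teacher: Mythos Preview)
The paper does not prove this theorem: it is quoted verbatim from Chazal and Lieutier~\cite[Thm.~6.2]{cl:reconstruction}, and the only commentary the paper adds is the remark immediately following the statement that the functions $g$ and $h$ ``are given explicitly by Chazal and Lieutier, and arise from technical geometric considerations.'' So there is no in-paper proof to compare against; you are attempting to reconstruct the argument of the cited reference.

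Your high-level strategy---rewriting $K_{\hat X,\lfs^\pi}(\kappa)$ as a sublevel set of a weighted distance function and then flowing down a gradient-like vector field in the spirit of Grove's critical point theory---is indeed the approach taken in~\cite{cl:reconstruction}. However, your reading of the roles of $g$ and $h$ is not quite right: they are not thresholds ensuring ``$a$ large enough for covering'' versus ``$b$ small enough for displacement.'' In Chazal and Lieutier's argument both $g$ and $h$ are explicit rational expressions in $\alpha,\beta,a,b$ that together bound, from two sides, the angle between the generalized gradient of the weighted distance and the direction toward $X$; the inequality $g<h$ is what guarantees the flow has no critical points in the relevant annular region and stays inside the union of balls. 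Your step~(ii) correctly identifies this as the crux, but the mechanism is a quantitative angle estimate rather than a covering/displacement trade-off. As written, your sketch is a plausible outline but would need the explicit formulas for $g,h$ and the corresponding angle computation to become a proof; those details are precisely what the paper delegates to the citation.
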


The Nerve Theorem applies in the above definition, so that the geometric realization $\vert C_{P,\lfs^\pi}(\kappa)\vert$ is homotopy equivalent to $K_{P,\lfs^\pi}(\kappa)$ for all $P$, $X$, and $\kappa$. The functions $g$ and~$h$ in the Theorem are given explicitly by Chazal and Lieutier, and arise from technical 
geometric~considerations. 

Our homotopy continuation methods in \Cref{sec:lfs_compute} give us oracles for $\lfs$ and the reach, but these require some care to integrate with \Cref{thm:adapt_hom_inference}. The oracles introduce some estimation error, for instance, and we can only estimate $\lfs(z)$ for a point $z\in\mathbb{R}^\numvars$ rather than~$\lfs(\pi_X(z))$. 

To fix notation, let $X$ be a smooth and compact algebraic manifold in $\mr^\numvars$ that is the real part of an equidimensional and smooth algebraic variety. Let $0 < \widehat{\lfs} \leq \lfs_X$, let $\Vert\lfs - \widehat{\lfs}\Vert_\infty \leq E_{\lfs}$, and let $0 < R < \reach(X)$. Fix $\delta > 0$ and denote $\frac{\delta}{R}$ by $\delta_R$. In the following, recall that $\lfs(x) \leq \reach(X)$ for all $x\in X$ and that $\lfs$ is 1-Lipschitz.

\begin{theorem}
Let $0 < \lambda,\mu \leq 1$ and let $\hat{X}_0$ be a $(\delta_R R,\mu R)$-sample of $X$. 
\begin{enumerate}
    \item Take $\beta' = \mu+3\delta_R+\lambda(1+\delta_R)$ and 
    $\alpha' = \frac{\delta_R}{\beta'}$. If $\widehat{\lfs}_\lambda:\mr^\numvars\to\mr$ is defined by $z\mapsto \lambda\widehat{\lfs}(z)$, then 
    $\textsc{Subsample}(\hat{X}_0,\widehat{\lfs}_\lambda)$, denoted $\hat{X}$, is an 
    adaptive-$(\alpha',\beta')$ sample of $X$.
    
    \item Set $M_K = \frac{E_{\lfs}}{R} + \delta_R +1$ and 
    $M_{\hat{K}} = \frac{E_{\lfs}}{\hat{R}(1-\delta_R)} + \frac{1}{1-\delta_R}$ where 
    $\hat{R} = \min_{\hat{x}\in\hat{X}} \widehat{\lfs}(\hat{x})$. If there is $a > 0$ such that $4(M_KM_{\hat{K}})^2 a < \frac{1}{3} - \delta_R$ and
    $g(\alpha',\beta',a,4(M_K M_{\hat{K}})^2a) < 
    h(\alpha',\beta',a,4(M_K M_{\hat{K}})^2a)$ where $g$ and $h$ are 
    the functions from \Cref{thm:adapt_hom_inference}, then the rank of
    the map
    \[
    H_\ell(R_{\hat{X},\widehat{\lfs}}(M_k a) \subseteq R_{\hat{X},\widehat{\lfs}}(2M_K^2 M_{\hat{K}}a))
    \]
    is the $\ell^\text{th}$ Betti number of $X$.
\end{enumerate}
\end{theorem}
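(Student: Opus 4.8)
The plan is to prove statement~(1) directly from the definitions and then deduce~(2) by feeding~(1) into \Cref{thm:adapt_hom_inference} and running a weighted interleaving between the \v{C}ech-type complexes $C_{\hat{X},\lfs^\pi}$ appearing there and the adaptive Vietoris--Rips complexes $R_{\hat{X},\widehat{\lfs}}$ we actually compute. For~(1) I would check the two bullets of \Cref{def:adapt}. Since $\hat{X}\subseteq\hat{X}_0\subseteq X^{\delta}$, every $\hat{x}\in\hat{X}$ has $d_X(\hat{x})\le\delta=\delta_R R$; because $\pi_X(\hat{x})\subseteq X$ we have $\lfs(\pi_X(\hat{x}))\ge\reach(X)>R$, so $d_X(\hat{x})\le\delta_R R<\delta_R\,\lfs(\pi_X(\hat{x}))=\alpha'\beta'\,\lfs(\pi_X(\hat{x}))$, which is the first bullet. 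For the covering bullet, given $x\in X$ the density of $\hat{X}_0$ yields $p\in\hat{X}_0$ with $\lVert x-p\rVert\le\mu R$, and the greedy deletion rule of \textsc{Subsample} (\Cref{alg:subsampalgorithm}) yields $\hat{x}\in\hat{X}$ with $\lVert p-\hat{x}\rVert\le\lambda\,\widehat{\lfs}(\hat{x})\le\lambda\,\lfs(\hat{x})$. Picking $q^{\ast}\in\pi_X(\hat{x})$ attaining $\lfs(\pi_X(\hat{x}))$, the triangle inequality and $1$-Lipschitz continuity of $\lfs$ give $d(x,\pi_X(\hat{x}))\le\lVert x-\hat{x}\rVert+d_X(\hat{x})\le\mu R+\lambda\,\lfs(\hat{x})+\delta$ and $\lfs(\hat{x})\le\lfs(\pi_X(\hat{x}))+\delta$; converting each of $\mu R$, $\delta$, $\lambda\delta$ into a multiple of $\lfs(\pi_X(\hat{x}))$ via $\lfs(\pi_X(\hat{x}))>R$ produces $d(x,\pi_X(\hat{x}))<(\mu+3\delta_R+\lambda(1+\delta_R))\,\lfs(\pi_X(\hat{x}))=\beta'\,\lfs(\pi_X(\hat{x}))$. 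Hence $\hat{X}$ is an adaptive-$(\alpha',\beta')$ sample of $X$.

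For~(2), apply \Cref{thm:adapt_hom_inference} to this adaptive-$(\alpha',\beta')$ sample with the given $a$ and $b:=4(M_KM_{\hat K})^2a$: the hypotheses $b<\tfrac13-\delta_R$ and $g(\alpha',\beta',a,b)<h(\alpha',\beta',a,b)$ are exactly those required (and $a<b$ is automatic, as $M_K,M_{\hat K}>1$), so $X$ is a deformation retract of $K_{\hat{X},\lfs^\pi}(\kappa)$ for every $\kappa\in[a,b]$. Since closed balls have convex, hence contractible or empty, common intersections, the Nerve Lemma, applied functorially in $\kappa$, identifies $\lvert C_{\hat{X},\lfs^\pi}(\kappa)\rvert$ with $K_{\hat{X},\lfs^\pi}(\kappa)$ compatibly, so for $a\le\kappa_1\le\kappa_2\le b$ the inclusion-induced map $H_\ell C_{\hat{X},\lfs^\pi}(\kappa_1)\to H_\ell C_{\hat{X},\lfs^\pi}(\kappa_2)$ is an isomorphism onto $H_\ell(X)$ and thus has rank $\beta_\ell(X)$. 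Next I would record, for $\hat{x}\in\hat{X}$, the pointwise comparison of the two radius functions: using $d_X(\hat{x})\le\delta$, the $1$-Lipschitz estimate $\lvert\lfs(\hat{x})-\lfs(\pi_X(\hat{x}))\rvert\le\delta$, the sandwich $\widehat{\lfs}(\hat{x})\le\lfs(\hat{x})\le\widehat{\lfs}(\hat{x})+E_{\lfs}$, the bound $\widehat{\lfs}(\hat{x})\ge\hat{R}$, and $\lfs(\pi_X(\hat{x}))>R$, the additive errors $\delta$ and $E_{\lfs}$ become multiplicative and one finds that $\lfs(\pi_X(\hat{x}))$ and $\widehat{\lfs}(\hat{x})$ each bound the other up to the constants $M_K$ and $M_{\hat K}$. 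Combining these comparisons with the elementary weighted interleaving $C_{\hat{X},\widehat{\lfs}}(t)\subseteq R_{\hat{X},\widehat{\lfs}}(t)\subseteq C_{\hat{X},\widehat{\lfs}}(2t)$ — where $C_{\hat{X},\widehat{\lfs}}(t)$ is the nerve of $\{\overline{B}_z(t\,\widehat{\lfs}(z))\}_{z\in\hat{X}}$ and the second inclusion holds because in any simplex the vertex minimizing $\widehat{\lfs}$ lies in all the radius-$2t\,\widehat{\lfs}(\cdot)$ balls centered at the other vertices — and threading everything through $C_{\hat{X},\lfs^\pi}$ four times yields a chain of simplicial inclusions
\[
C_{\hat{X},\lfs^\pi}(a)\ \subseteq\ R_{\hat{X},\widehat{\lfs}}(M_Ka)\ \subseteq\ C_{\hat{X},\lfs^\pi}(\kappa)\ \subseteq\ R_{\hat{X},\widehat{\lfs}}\bigl(2M_K^2M_{\hat K}a\bigr)\ \subseteq\ C_{\hat{X},\lfs^\pi}(b)
\]
for a suitable intermediate $\kappa\in(a,b)$, with $b=4(M_KM_{\hat K})^2a$ as above.

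Finally I would apply $H_\ell$ and invoke the standard sandwich argument for interleaved persistence modules: the rank-$\beta_\ell(X)$ isomorphism $H_\ell C_{\hat{X},\lfs^\pi}(a)\to H_\ell C_{\hat{X},\lfs^\pi}(b)$ factors through $H_\ell R_{\hat{X},\widehat{\lfs}}(M_Ka)\to H_\ell R_{\hat{X},\widehat{\lfs}}(2M_K^2M_{\hat K}a)$, forcing the latter to have rank at least $\beta_\ell(X)$, while that same map factors through $H_\ell C_{\hat{X},\lfs^\pi}(\kappa)$, which is $\beta_\ell(X)$-dimensional, forcing its rank to be at most $\beta_\ell(X)$; hence the rank equals $\beta_\ell(X)$. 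I expect the main obstacle to be the bookkeeping in the middle step: deriving the comparison constants $M_K,M_{\hat K}$ with the exact shape stated (each is an a posteriori quantity, since $\hat{R}$ depends on the computed sample, and each requires converting several additive errors into multiplicative ones through the reach lower bound), and then verifying that the four-step interleaving closes up precisely at $b=4(M_KM_{\hat K})^2a$ with the two Vietoris--Rips scales landing exactly at $M_Ka$ and $2M_K^2M_{\hat K}a$. Conceptually, the point being managed throughout is the mismatch between what the homotopy-continuation oracle supplies — a lower bound $\widehat{\lfs}(z)$ for $\lfs$ at the sample point $z$ itself — and the quantity $\lfs(\pi_X(z))$ around which Chazal and Lieutier's reconstruction guarantee~\cite{cl:reconstruction} is formulated, a gap bridged by $1$-Lipschitz continuity of $\lfs$ together with $\lfs|_X\ge\reach(X)>R$.
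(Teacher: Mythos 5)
Your strategy matches the paper's: Part~(1) is a direct verification of \Cref{def:adapt} via the triangle inequality and $1$-Lipschitz continuity of $\lfs$, and Part~(2) feeds (1) into \Cref{thm:adapt_hom_inference}, uses the weighted interleaving of \Cref{prop:estimated_adaptive_interleaving} between $C_{\hat{X},\lfs^\pi}$ and $C_{\hat{X},\widehat{\lfs}}$, then closes with the adaptive \v{C}ech--Rips sandwich $C_{\hat{X},\widehat{\lfs}}(t)\subseteq R_{\hat{X},\widehat{\lfs}}(t)\subseteq C_{\hat{X},\widehat{\lfs}}(2t)$ (your justification of the second inclusion via the vertex of minimal $\widehat{\lfs}$ is correct).

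Two points of friction. In Part~(1), your chain $x\to p\to\hat{x}\to\pi_X(\hat{x})$ is shorter than the paper's (which detours through $\pi_X(p)$ and back, absorbing $\Vert p-\pi_X(p)\Vert$ twice) and in fact yields the tighter coefficient $\mu+\delta_R+\lambda(1+\delta_R)$, not $\mu+3\delta_R+\lambda(1+\delta_R)$ as you assert at the end; this is fine, since a smaller coefficient still gives the claimed adaptive-$(\alpha',\beta')$ bound, but the ``$=\beta'$'' you wrote does not actually follow from your preceding steps. In Part~(2), you state the interleaving chain without verifying it, flagging the constant bookkeeping as ``the main obstacle'' --- and indeed, with the theorem's stated definitions $M_K=\frac{E_{\lfs}}{R}+\delta_R+1$ and $M_{\hat K}=\frac{E_{\lfs}}{\hat{R}(1-\delta_R)}+\frac{1}{1-\delta_R}$ (which match \Cref{prop:estimated_adaptive_interleaving}), the interleaving gives $C_{\hat{X},\lfs^\pi}(\kappa)\subseteq C_{\hat{X},\widehat{\lfs}}(M_{\hat K}\kappa)$ and $C_{\hat{X},\widehat{\lfs}}(\kappa)\subseteq C_{\hat{X},\lfs^\pi}(M_K\kappa)$, so the eight-term chain closes with the Vietoris--Rips complexes appearing at radii $M_{\hat K}a$ and $2M_KM_{\hat K}^2a$, not $M_Ka$ and $2M_K^2M_{\hat K}a$ as you wrote. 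The first inclusion $C_{\hat{X},\lfs^\pi}(a)\subseteq R_{\hat{X},\widehat{\lfs}}(M_Ka)$ in your chain would require $M_{\hat K}\leq M_K$, which fails in general because $\hat R$ can be much smaller than $R$. You appear to have inherited this from a labeling swap already present between \Cref{prop:estimated_adaptive_interleaving} and \Cref{cor:cech_adapt_inference} in the paper (the latter interchanges the definitions of $M_K$ and $M_{\hat K}$), which then propagates into the theorem statement. Carrying the derivation through carefully would have surfaced the mismatch; as written, the chain does not compose.
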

The remainder of the Appendix is dedicated to proving this Theorem.
\begin{proposition}
Let $0 \leq \lambda \leq 1$, let $X$ have $0 < R < \reach(X)$, let $\hat{X}$ be a $(\delta_R R,\mu R)$-sample of $X$ in $\mr^\numvars$ and let $\widehat{\lfs}_\lambda:\mr^\numvars\to\mr$ be defined by $z\mapsto \lambda\widehat{\lfs}(z)$. Take $\beta' = \mu + 3\delta_R + \lambda(1+\delta_R)$ and $\alpha' =\frac{\delta_R}{\beta'}$. Then $\textsc{Subsample}(\hat{X},\widehat{\lfs}_\lambda)$ is an adaptive-$(\alpha',\beta')$ sample of $X$.
\end{proposition}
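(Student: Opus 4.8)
The plan is to check the two conditions in the definition of an adaptive-$(\alpha',\beta')$ sample (\Cref{def:adapt}) directly for $\hat{X}' := \textsc{Subsample}(\hat X,\widehat{\lfs}_\lambda)$. Three ingredients are used: the sampling hypothesis, namely $d_X(\hat x)\le\delta_R R$ for every $\hat x\in\hat X$ and the existence, for every $x\in X$, of a point of $\hat X$ within distance $\mu R$ of $x$; the pointwise bound $0<\widehat{\lfs}\le\lfs$ together with the $1$-Lipschitz continuity of $\lfs$; and the identity $\reach(X)=\min_{x\in X}\lfs(x)$, which gives $\lfs(q)\ge\reach(X)>R$ for all $q\in X$ and hence $\lfs(\pi_X(z))>R$ for every $z\in\mr^\numvars$ (here $\pi_X(z)\neq\emptyset$ because $X$ is compact). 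Since $\hat X'\subseteq\hat X$, the first (noise) condition follows at once: for $\hat x\in\hat X'$ we have $\alpha'\beta'=\delta_R$ and $\lfs(\pi_X(\hat x))>R$, so $d_X(\hat x)\le\delta_R R<\delta_R\,\lfs(\pi_X(\hat x))=\alpha'\beta'\,\lfs(\pi_X(\hat x))$.

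Next I would isolate the single property of \Cref{alg:subsampalgorithm} that is needed, a covering statement: every $\hat y\in\hat X$ admits some $\hat x\in\hat X'$ with $\|\hat x-\hat y\|\le\widehat{\lfs}_\lambda(\hat x)=\lambda\widehat{\lfs}(\hat x)$. Indeed, either $\hat y$ survives into the output and one takes $\hat x=\hat y$, or $\hat y$ is deleted from the priority queue by some point $\hat x$ that had already been moved to the output, and the deletion test applied in that step is exactly $\|\hat x-\hat y\|\le\lambda\widehat{\lfs}(\hat x)$.

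For the second (density) condition I would fix $x\in X$, pick $\hat y\in\hat X$ with $\|x-\hat y\|\le\mu R$, pick $\hat x\in\hat X'$ covering $\hat y$ as above, and set $\ell := \lfs(\pi_X(\hat x))$, so $\ell>R$. For any $q\in\pi_X(\hat x)$ one has $\|\hat x-q\|=d_X(\hat x)\le\delta_R R$; the triangle inequality then gives $\|x-q\|\le\mu R+\lambda\widehat{\lfs}(\hat x)+\delta_R R$, while $1$-Lipschitzness gives $\widehat{\lfs}(\hat x)\le\lfs(\hat x)\le\lfs(q)+\delta_R R$. Taking the infimum over $q\in\pi_X(\hat x)$ turns these into $\widehat{\lfs}(\hat x)\le\ell+\delta_R R$ and $d(x,\pi_X(\hat x))\le\mu R+\lambda(\ell+\delta_R R)+\delta_R R=\lambda\ell+(\mu+\lambda\delta_R+\delta_R)R$. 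Since $R<\ell$ and $\delta_R<3\delta_R$, the right-hand side is strictly smaller than $(\lambda+\lambda\delta_R+\mu+3\delta_R)\ell=\beta'\ell=\beta'\,\lfs(\pi_X(\hat x))$, which is the required strict inequality. This proves that $\hat X'$ is an adaptive-$(\alpha',\beta')$ sample of $X$.

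The argument is elementary and I do not expect a genuine obstacle; the one point that requires care is the bookkeeping in the density estimate, namely passing from a fixed closest point $q$ to the infimum defining $\lfs(\pi_X(\hat x))$ in both the distance bound and the Lipschitz bound, and keeping every inequality strict (which is exactly where the hypothesis $R<\reach(X)$, rather than $R\le\reach(X)$, is used). The gap between the coefficient $\delta_R$ that the estimate actually produces and the $3\delta_R$ appearing in $\beta'$ is pure slack and causes no difficulty.
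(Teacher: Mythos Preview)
Your argument is correct and follows the same overall strategy as the paper: verify the two conditions of \Cref{def:adapt} directly, using the covering property of \textsc{Subsample}, the $1$-Lipschitz continuity of $\lfs$, and the bound $\lfs(\pi_X(\cdot))>R$. The only difference is in the triangle-inequality path for the density condition: the paper routes through the extra point $\pi_X(\hat y)$ (in your notation), picking up two additional $\delta_R R$ terms and thereby using all of the $3\delta_R$ in $\beta'$, whereas your direct path $x\to\hat y\to\hat x\to q$ produces only a single $\delta_R$ and confirms, as you observed, that the stated $\beta'$ carries slack.
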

\begin{proof} 
Denote $\textsc{Subsample}(\hat{X},\widehat{\lfs}_\lambda)$ by $S$. The first condition of \Cref{def:adapt} is trivially satisfied because $S \subseteq \hat{X}$. To see the second condition holds, first suppose $x\in X$. There is $\hat{x}\in\hat{X}$ such that $\Vert x- \hat{x}\Vert \leq \mu R$. Therefore $\Vert x- \pi_X(\hat{x})\Vert \leq (\delta_R+\mu)R$. If $\hat{x}\in S$ then the second condition of \Cref{def:adapt} is satisfied directly. Otherwise, there is $\hat{x_0}\in S$ such that $\Vert\hat{x_0} - \hat{x}\Vert \leq \lambda\widehat{\lfs}(\hat{x_0}) \leq \lambda\lfs(\hat{x_0})$. We have that
\[
\Vert x - \pi_X(\hat{x_0}) \Vert \leq \Vert x - \pi_X(\hat{x})\Vert + \Vert \pi_X(\hat{x})- \pi_X(\hat{x_0}) \Vert
\]
\[
\leq \Vert x - \pi_X(\hat{x})\Vert + \Vert \pi_X(\hat{x_0}) - \hat{x_0}\Vert + \Vert \hat{x_0} - \hat{x}\Vert + \Vert \hat{x} - \pi_X(\hat{x}) \Vert
\]
\[
\leq (\delta_R+\mu) R + \delta_R R + \lambda\lfs(\hat{x_0}) + \delta_R R
\]
\[
= R(\mu+3\delta_R) + \lambda(\lfs(\hat{x_0}) - \lfs(\pi_X(\hat{x_0}))) + \lambda\lfs(\pi_X(\hat{x_0}))
\]
\[
\leq \lfs(\pi_X(\hat{x_0}))(\mu + 3\delta_R + \lambda) + \lambda\Vert \hat{x_0} - \pi_X(\hat{x_0})\Vert
\]
\[
\leq \lfs(\pi_X(\hat{x_0}))(\mu + 3\delta_R + \lambda(1+\delta_R))\text{.}
\]
\end{proof} 

\begin{definition}
Let $X \subseteq\mr^\numvars$ be compact and the real part of a smooth and equidimensional algebraic variety, let $P$ be a finite subset of $\mr^\numvars$, and let $\widehat{\lfs}:\mr^\numvars\to\mr$ be the local feature size oracle for $X$ described in \Cref{sec:lfs_compute}. For $\kappa$ with $0 < \kappa$ and any $z\in\mr$, let $\widehat{\lfs}_\kappa = \kappa\widehat{\lfs}(z)$. Denote by $K_{P,\widehat{\lfs}}(\kappa)$ the union of balls 
\[
\cup_{z\in P} \overline{B}_z(\widehat{\lfs}_\kappa(z))
\]
and denote by $C_{P,\widehat{\lfs}}$ the functor $\RR\to\simp$ where $C_{P,\widehat{\lfs}}(\kappa)$ is the nerve of $\{\overline{B}_z(\widehat{\lfs}_\kappa(z))\}_{z\in P}$.
\end{definition}
\begin{remark} Let $0 \leq \alpha',\beta' < 1$ and let $\hat{X}$ be an adaptive-$(\alpha',\beta')$ sample of $X$ in $\mr^\numvars$. Since $\lfs$ is 1-Lipschitz we have for any $\hat{x}\in\hat{X}$ that $\lfs(\hat{x}) - \lfs(\pi_X(\hat{x})) \leq \Vert \hat{x} - \pi_X(\hat{x})\Vert \leq \alpha'\beta'\lfs(\pi_X(\hat{x}))$. This rearranges to $\lfs(\pi_X(\hat{x})) \leq \frac{1}{1-\alpha'\beta'}\lfs(\hat{x})$.
\end{remark}
\begin{proposition}
Let $\hat{X}$ be an adaptive-$(\alpha',\beta')$ sample of $X$ with $0\leq\alpha'\beta' < 1$ and let $\hat{R} = \min_{\hat{x}\in X} \hat{\lfs}(\hat{x})$. Set $M_{K} = \frac{E_{\lfs}}{R} + \alpha'\beta' + 1$ and $M_{\hat{K}} = \frac{E_{\lfs}}{\hat{R}(1-\alpha'\beta')} + \frac{1}{1-\alpha'\beta'}$. Then for any $\kappa > 0$, $K_{\hat{X},\lfs^\pi}(\kappa) \subseteq K_{\hat{X},\widehat{\lfs}}(M_{\hat{K}}\kappa)$ and $\hat{K}_{\hat{X},\widehat{\lfs}}(\kappa)\subseteq K_{\hat{X},\lfs^\pi}(M_K\kappa).$ \label{prop:estimated_adaptive_interleaving}
\end{proposition}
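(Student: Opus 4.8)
The plan is to reduce both containments of unions of balls to pointwise comparisons of radii at the (shared) ball centers, and then to chain the triangle inequality with the $1$-Lipschitz property of $\lfs$ and the two defining inequalities of an adaptive sample. Concretely, $K_{\hat X,\lfs^\pi}(\kappa)=\bigcup_{\hat x\in\hat X}\overline B_{\hat x}(\kappa\,\lfs(\pi_X(\hat x)))$ and $K_{\hat X,\widehat{\lfs}}(\kappa)=\bigcup_{\hat x\in\hat X}\overline B_{\hat x}(\kappa\,\widehat{\lfs}(\hat x))$ are unions of balls with the same centers, so each asserted inclusion holds as soon as the corresponding radius inequality holds at every $\hat x\in\hat X$; moreover the positive scalar $\kappa$ cancels on both sides. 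Thus it suffices to show, for every $\hat x\in\hat X$,
\[
\lfs(\pi_X(\hat x))\le M_{\hat K}\,\widehat{\lfs}(\hat x)
\qquad\text{and}\qquad
\widehat{\lfs}(\hat x)\le M_{K}\,\lfs(\pi_X(\hat x)).
\]

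For the second of these I would use $\widehat{\lfs}\le\lfs$, then $1$-Lipschitzness of $\lfs$ together with the first bullet of \Cref{def:adapt}, namely $\Vert\hat x-\pi_X(\hat x)\Vert=d_X(\hat x)<\alpha'\beta'\lfs(\pi_X(\hat x))$: this gives $\widehat{\lfs}(\hat x)\le\lfs(\hat x)\le\lfs(\pi_X(\hat x))+\Vert\hat x-\pi_X(\hat x)\Vert\le(1+\alpha'\beta')\lfs(\pi_X(\hat x))$, and $1+\alpha'\beta'\le M_K$ because $E_{\lfs}/R\ge0$. For the first inequality I would start from the Remark immediately preceding the statement, which rearranges the same Lipschitz/adaptive bound into $\lfs(\pi_X(\hat x))\le\frac{1}{1-\alpha'\beta'}\lfs(\hat x)$; combining this with $\lfs(\hat x)\le\widehat{\lfs}(\hat x)+E_{\lfs}$ (from $\widehat{\lfs}\le\lfs$ and $\Vert\lfs-\widehat{\lfs}\Vert_\infty\le E_{\lfs}$) yields $\lfs(\pi_X(\hat x))\le\frac{1}{1-\alpha'\beta'}\bigl(\widehat{\lfs}(\hat x)+E_{\lfs}\bigr)$. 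It then remains to absorb the additive term $\frac{E_{\lfs}}{1-\alpha'\beta'}$ into a multiplicative constant valid uniformly over $\hat X$, which is exactly the role of $\hat R=\min_{\hat x\in\hat X}\widehat{\lfs}(\hat x)$: since $\widehat{\lfs}(\hat x)\ge\hat R$ we have $E_{\lfs}\le\frac{E_{\lfs}}{\hat R}\,\widehat{\lfs}(\hat x)$, and substituting this makes the right-hand side equal to $M_{\hat K}\,\widehat{\lfs}(\hat x)$ with $M_{\hat K}=\frac{E_{\lfs}}{\hat R(1-\alpha'\beta')}+\frac{1}{1-\alpha'\beta'}$ as defined.

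I do not anticipate a real obstacle: the argument is bookkeeping with the triangle inequality and the adaptive-sampling hypothesis. The only delicate point is the last step above --- converting the \emph{additive} oracle error $E_{\lfs}$ into a \emph{multiplicative} factor valid simultaneously at every sample point --- and then verifying that the constants $M_K$ and $M_{\hat K}$ written in the statement are precisely what this manipulation produces. Once $\hat R$ is brought in, both verifications are immediate.
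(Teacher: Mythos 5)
Your proof is correct and follows the paper's approach: both inclusions are reduced to pointwise radius comparisons at the shared ball centers (with $\kappa$ cancelling), and then chained through the triangle inequality, the $1$-Lipschitzness of $\lfs$, the adaptive-sample bound $d_X(\hat x)<\alpha'\beta'\lfs(\pi_X(\hat x))$, and the oracle error $E_{\lfs}$, with $\hat R$ (resp.\ $R$) used to absorb the additive error into a multiplicative constant. The only small divergence is in the second inclusion, where you invoke $\widehat{\lfs}\le\lfs$ directly to get the tighter constant $1+\alpha'\beta'$ and then observe $1+\alpha'\beta'\le M_K$, whereas the paper derives $M_K$ itself from $\widehat{\lfs}(\hat x)\le E_{\lfs}+\lfs(\hat x)$ followed by $E_{\lfs}\le\frac{E_{\lfs}}{R}\lfs(\pi_X(\hat x))$ --- both are valid, and yours is marginally more economical.
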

\begin{proof}
For the first inclusion, suppose $y\in\mr^\numvars$ has $\Vert y - \hat{x} \Vert \leq \kappa\lfs(\pi_X(\hat{x}))$ for some $\hat{x}\in\hat{X}$. Hence,
\[
\Vert y - \hat{x} \Vert \leq \frac{\kappa}{1-\alpha'\beta'}\lfs(\hat{x}) = \frac{\kappa}{1-\alpha'\beta'}(\lfs(\hat{x}) - \widehat{\lfs}(\hat{x}) + \widehat{\lfs}(\hat{x}))
\]
\[
\leq \frac{\kappa}{1-\alpha'\beta'}(E_{\lfs} + \widehat{\lfs}(\hat{x}))\leq \frac{\kappa\widehat{\lfs}(\hat{x})}{1-\alpha'\beta'}\left(\frac{E_{\lfs}}{\hat{R}} + 1\right)\text{.}
\]
For the second inclusion, suppose $y\in\mr^\numvars$ has $\Vert y - \hat{x} \Vert \leq \kappa\widehat{\lfs}(\hat{x})$ for some $\hat{x}\in\hat{X}$. Then 
\[
\Vert y - \hat{x} \Vert \leq \kappa(\widehat{\lfs}(\hat{x}) - \lfs(\hat{x}) + \lfs(\hat{x})) \leq \kappa(E_{\lfs} + \lfs(\hat{x}) - \lfs(\pi_X(\hat{x})) + \lfs(\pi_X(\hat{x}))) 
\]
\[
\leq \kappa(E_{\lfs} + \Vert \hat{x} - \pi_X(\hat{x})\Vert + \lfs(\pi_X(\hat{x}))) \leq \kappa(E_{\lfs}+\alpha'\beta'\lfs(\pi_X(\hat{x})) + \lfs(\pi_X(\hat{x})))
\]
\[
\leq \kappa\lfs(\pi_X(\hat{x}))\left(\frac{E_{\lfs}}{R}+\alpha'\beta'+1\right).
\]
\end{proof}
\begin{corollary}
With notation and assumptions as in \Cref{prop:estimated_adaptive_interleaving}, let 
$$M_K = \left(\frac{E_{\lfs}}{\hat{R}} + 1\right)\frac{1}{1-\alpha'\beta'}
\hbox{~~and~~} M_{\hat{K}} = \frac{E_{\lfs}}{R}+\alpha'\beta'+1.$$ Suppose that $0 < a < (M_K M_{\hat{K}})^2 a  < \frac{1}{3} - \alpha'\beta'$ and $$g(\alpha',\beta',a,(M_k M_{\hat{K}})^2 a) \leq h(\alpha',\beta',a,(M_k M_{\hat{K}})^2 a).$$ 
Then, the Betti number $\beta_\ell(X)$ is the rank of the map obtain from applying $H_\ell$ to the inclusion $C_{\hat{X},\widehat{\lfs}}(M_{K}a) \subseteq C_{\hat{X},\widehat{\lfs}}(M_{K}^2 M_{\hat{K}} a)$. \label{cor:cech_adapt_inference}
\end{corollary}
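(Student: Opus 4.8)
The plan is to run an interleaving argument between the oracle thickenings $K_{\hat X,\widehat\lfs}(\kappa)$ and the genuine local‑feature‑size thickenings $K_{\hat X,\lfs^\pi}(\kappa)$, transport it through the Nerve Lemma to the simplicial complexes $C_{\hat X,\widehat\lfs}$ and $C_{\hat X,\lfs^\pi}$, and finish with a short rank computation, exactly in the spirit of the interleaving proof of the Homology Inference Theorem.

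First I would record what \Cref{thm:adapt_hom_inference} gives here. With $b=(M_KM_{\hat K})^2 a$, the hypotheses $a<(M_KM_{\hat K})^2a<\tfrac13-\alpha'\beta'$ and $g(\alpha',\beta',a,b)\le h(\alpha',\beta',a,b)$ imply that $X$ is a deformation retract of $K_{\hat X,\lfs^\pi}(\kappa)$ for every $\kappa\in[a,b]$. Since $M_K,M_{\hat K}\ge 1$, the three parameters $a\le M_KM_{\hat K}a\le (M_KM_{\hat K})^2a$ all lie in $[a,b]$. For any $a\le\kappa\le\kappa'\le b$ the inclusion $K_{\hat X,\lfs^\pi}(\kappa)\hookrightarrow K_{\hat X,\lfs^\pi}(\kappa')$ is then a homotopy equivalence: $X\hookrightarrow K_{\hat X,\lfs^\pi}(\kappa)$ and $X\hookrightarrow K_{\hat X,\lfs^\pi}(\kappa')$ are deformation‑retract inclusions, hence homotopy equivalences, and the composite of the former with the inclusion of thickenings is the latter, so two‑out‑of‑three applies. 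Thus $H_\ell K_{\hat X,\lfs^\pi}(\kappa)\cong H_\ell(X)$ for all $\kappa\in[a,b]$, compatibly with inclusions.

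Next I would chain the two inclusions of \Cref{prop:estimated_adaptive_interleaving} (with the constants relabelled as in the corollary), namely $K_{\hat X,\lfs^\pi}(\kappa)\subseteq K_{\hat X,\widehat\lfs}(M_K\kappa)$ and $K_{\hat X,\widehat\lfs}(\kappa)\subseteq K_{\hat X,\lfs^\pi}(M_{\hat K}\kappa)$, evaluated at $\kappa=a,\ M_Ka,\ M_KM_{\hat K}a,\ M_K^2M_{\hat K}a$, obtaining
\[
K_{\hat X,\lfs^\pi}(a)\subseteq K_{\hat X,\widehat\lfs}(M_Ka)\subseteq K_{\hat X,\lfs^\pi}(M_KM_{\hat K}a)\subseteq K_{\hat X,\widehat\lfs}(M_K^2M_{\hat K}a)\subseteq K_{\hat X,\lfs^\pi}\big((M_KM_{\hat K})^2a\big).
\]
All balls are convex, so the covers are good covers; by the functorial (persistent) Nerve Lemma — the version already invoked after \Cref{thm:adapt_hom_inference}, together with \cite{coveragetop} — there are homotopy equivalences from the geometric realizations of the nerves to these unions of balls that commute up to homotopy with the inclusions. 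Applying $H_\ell(-;\mathbb{F}_2)$ produces a commutative ladder whose bottom row is the displayed chain and whose top row is
\[
H_\ell C_{\hat X,\lfs^\pi}(a)\xrightarrow{f_1}H_\ell C_{\hat X,\widehat\lfs}(M_Ka)\xrightarrow{f_2}H_\ell C_{\hat X,\lfs^\pi}(M_KM_{\hat K}a)\xrightarrow{f_3}H_\ell C_{\hat X,\widehat\lfs}(M_K^2M_{\hat K}a)\xrightarrow{f_4}H_\ell C_{\hat X,\lfs^\pi}\big((M_KM_{\hat K})^2a\big).
\]
Since the vertical maps are isomorphisms and the bottom composites $K_{\hat X,\lfs^\pi}(a)\hookrightarrow K_{\hat X,\lfs^\pi}(M_KM_{\hat K}a)$ and $K_{\hat X,\lfs^\pi}(M_KM_{\hat K}a)\hookrightarrow K_{\hat X,\lfs^\pi}((M_KM_{\hat K})^2a)$ induce isomorphisms by the previous step, the composites $f_2f_1$ and $f_4f_3$ are isomorphisms onto spaces of dimension $\beta_\ell(X)$. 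Then $f_2f_1$ an isomorphism forces $f_2$ surjective, $f_4f_3$ an isomorphism forces $f_3$ injective, whence $\rank(f_3f_2)=\dim H_\ell C_{\hat X,\lfs^\pi}(M_KM_{\hat K}a)=\beta_\ell(X)$; and $f_3f_2$ is exactly $H_\ell$ of the named inclusion $C_{\hat X,\widehat\lfs}(M_Ka)\subseteq C_{\hat X,\widehat\lfs}(M_K^2M_{\hat K}a)$ by functoriality, which is the claim.

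The main obstacle I anticipate is bookkeeping: choosing the evaluation points in the interleaving so that the three intermediate $\lfs^\pi$‑thickenings land in the window $[a,(M_KM_{\hat K})^2a]$ where \Cref{thm:adapt_hom_inference} is valid, and invoking the Nerve Lemma in its functorial form so that the homotopy equivalences stay compatible with a chain of inclusions that alternates between $\lfs^\pi$‑ and $\widehat\lfs$‑thickenings. The two‑out‑of‑three homotopy‑equivalence argument and the final rank computation are routine once the chain is assembled.
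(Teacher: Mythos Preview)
Your proposal is correct and follows essentially the same approach as the paper: build the five-term chain of inclusions from \Cref{prop:estimated_adaptive_interleaving}, apply \Cref{thm:adapt_hom_inference} so that the $K_{\hat X,\lfs^\pi}$ terms deformation retract to $X$, and invoke the Nerve Lemma to pass to nerves. The paper's proof is three terse sentences; you have correctly filled in the details it omits (checking the intermediate parameters land in $[a,b]$, the two-out-of-three homotopy-equivalence step, the functorial form of the Nerve Lemma, and the surjective/injective rank argument), and you have also correctly handled the notational swap of $M_K$ and $M_{\hat K}$ between the proposition and the corollary.
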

\begin{proof}
The former Proposition gives us the chain of inclusions
\[
{K}_{\hat{X},\lfs^\pi}(a) \subseteq K_{\hat{X},\widehat{\lfs}}(M_{\hat{K}} a) \subseteq {K}_{\hat{X},\lfs^\pi}(M_{\hat{K}}M_K a) \subseteq K_{\hat{X},\widehat{\lfs}}(M_{\hat{K}}^2 M_{K}a) \subseteq {K}_{\hat{X},\lfs^\pi}((M_{\hat{K}}M_K)^2 a)\text{.}
\]
By applying \Cref{thm:adapt_hom_inference} we have that all the spaces $K_{\hat{X},\lfs^\pi}$ deformation retract to $X$. Applying the nerve lemma we can replace $K$ with $C$. 
\end{proof}

\begin{proposition}
With notation and assumptions as in \Cref{cor:cech_adapt_inference} except with $4(M_KM_{\hat{K}})^2$ replacing $(M_KM_{\hat{K}})^2$, the Betti number $\beta_\ell(X)$ is the rank of the map obtained from applying $H_\ell$ to the inclusion $R_{\hat{X},\widehat{\lfs}}(M_K a) \subseteq R_{\hat{X},\widehat{\lfs}}(2 M_K^2M_{\hat{K}} a)$.
\end{proposition}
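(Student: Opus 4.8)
The plan is to mimic the proof of \Cref{thm:hom_inference}, but with the classical de Silva--Ghrist and Jung inequalities between Vietoris--Rips and \v{C}ech complexes replaced by \emph{adaptive} analogues comparing $R_{\hat X,\widehat{\lfs}}$ with $C_{\hat X,\widehat{\lfs}}$, and then to squeeze the inclusion $R_{\hat X,\widehat{\lfs}}(M_Ka)\subseteq R_{\hat X,\widehat{\lfs}}(2M_K^2M_{\hat K}a)$ between two \v{C}ech-to-\v{C}ech comparison maps of rank $\beta_\ell(X)$ supplied by the factor-$4$ version of \Cref{cor:cech_adapt_inference}. Throughout, abbreviate $C=C_{\hat X,\widehat{\lfs}}$, $R=R_{\hat X,\widehat{\lfs}}$, $K=K_{\hat X,\widehat{\lfs}}$, $K^\pi=K_{\hat X,\lfs^\pi}$, and write $M_K$ for the constant with $K^\pi(\kappa)\subseteq K(M_K\kappa)$ and $M_{\hat K}$ for the one with $K(\kappa)\subseteq K^\pi(M_{\hat K}\kappa)$ (as in \Cref{prop:estimated_adaptive_interleaving,cor:cech_adapt_inference}).

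The adaptive comparison step is easy. For any $\kappa>0$: (i) $C(\kappa)\subseteq R(\kappa)$, since a common point $y$ of the balls $\overline{B}_z(\kappa\widehat{\lfs}(z))$, $z\in\sigma$, gives $\lVert p-q\rVert\le\kappa(\widehat{\lfs}(p)+\widehat{\lfs}(q))$ for all $p,q\in\sigma$, i.e.\ $\diam_{\widehat{\lfs}}(\sigma)\le\kappa$; and (ii) $R(\kappa)\subseteq C(2\kappa)$, since if $v\in\sigma$ minimizes $\widehat{\lfs}$ over $\sigma$ then $\lVert v-q\rVert\le\kappa(\widehat{\lfs}(v)+\widehat{\lfs}(q))\le 2\kappa\widehat{\lfs}(q)$ for all $q\in\sigma$, so $v\in\bigcap_{q\in\sigma}\overline{B}_q(2\kappa\widehat{\lfs}(q))$. (The constant $2$ is crude compared with the sharp $A_\numvars$ of the unweighted case, but it suffices and avoids any weighted circumradius estimate; it is also what produces the factor $2$ in $2M_K^2M_{\hat K}a$.) Combining (i), (ii), monotonicity of both filtrations, and $M_KM_{\hat K}\ge 1$ yields the chain of subcomplex inclusions on $\hat X$
\[
C(M_Ka)\subseteq R(M_Ka)\subseteq C(2M_Ka)\subseteq C(2M_K^2M_{\hat K}a)\subseteq R(2M_K^2M_{\hat K}a)\subseteq C(4M_K^2M_{\hat K}a),
\]
all of which are literal inclusions, hence strictly compatible under $H_\ell$.

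Next I would show $H_\ell\bigl(C(2M_Ka)\subseteq C(2M_K^2M_{\hat K}a)\bigr)$ and $H_\ell\bigl(C(M_Ka)\subseteq C(4M_K^2M_{\hat K}a)\bigr)$ both have rank $\beta_\ell(X)$. This is \Cref{cor:cech_adapt_inference} reread with the window $[a,4(M_KM_{\hat K})^2a]$ in place of $[a,(M_KM_{\hat K})^2a]$: for a pair $(\kappa_1,\kappa_2)$ one interleaves with the $\lfs^\pi$-balls using \Cref{prop:estimated_adaptive_interleaving} and the (functorial) nerve lemma, getting $K^\pi(\kappa_1/M_K)\subseteq K(\kappa_1)\simeq\lvert C(\kappa_1)\rvert\subseteq\lvert C(\kappa_2)\rvert\simeq K(\kappa_2)\subseteq K^\pi(M_{\hat K}\kappa_2)$ for the lower bound on the rank, and the factorization $\lvert C(\kappa_1)\rvert\simeq K(\kappa_1)\subseteq K^\pi(M_{\hat K}\kappa_1)\subseteq K(M_KM_{\hat K}\kappa_1)\subseteq K(\kappa_2)\simeq\lvert C(\kappa_2)\rvert$ through $K^\pi(M_{\hat K}\kappa_1)$ for the upper bound. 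For $(\kappa_1,\kappa_2)=(2M_Ka,2M_K^2M_{\hat K}a)$ the relevant $\lfs^\pi$-thresholds $\kappa_1/M_K,\ M_{\hat K}\kappa_1,\ M_{\hat K}\kappa_2$ equal $2a,\ 2M_KM_{\hat K}a,\ 2(M_KM_{\hat K})^2a$, and for $(M_Ka,4M_K^2M_{\hat K}a)$ they equal $a,\ M_KM_{\hat K}a,\ 4(M_KM_{\hat K})^2a$; all lie in $[a,4(M_KM_{\hat K})^2a]$ because $M_KM_{\hat K}\ge 1$, so by \Cref{thm:adapt_hom_inference} (applicable since $4(M_KM_{\hat K})^2a<\tfrac13-\alpha'\beta'$ and $g<h$ at $b=4(M_KM_{\hat K})^2a$) every $K^\pi$ above is a deformation retract of $X$, and the comparison map has rank $\beta_\ell(X)$ by the usual sandwich of linear maps.

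Finally, writing $\psi=H_\ell\bigl(R(M_Ka)\subseteq R(2M_K^2M_{\hat K}a)\bigr)$, the chain exhibits $\psi$ as a composite passing through $H_\ell\bigl(C(2M_Ka)\subseteq C(2M_K^2M_{\hat K}a)\bigr)$, so $\rank\psi\le\beta_\ell(X)$; and it exhibits $H_\ell\bigl(C(M_Ka)\subseteq C(4M_K^2M_{\hat K}a)\bigr)$ as a composite passing through $\psi$, so $\rank\psi\ge\beta_\ell(X)$. Hence $\rank\psi=\beta_\ell(X)$, which is the claim. The part I expect to be genuinely delicate is not any individual inequality but the bookkeeping in the third paragraph: the intermediate \v{C}ech thresholds must be chosen so that \emph{both} comparison maps squeezing $\psi$ have \emph{all} of their $\lfs^\pi$-endpoints inside the Chazal--Lieutier window, and it is exactly this two-fold interleaving---\v{C}ech versus Vietoris--Rips, and $\widehat{\lfs}$ versus $\lfs^\pi$, each stage roughly doubling the window needed---that dictates the factor $4$ in the hypothesis.
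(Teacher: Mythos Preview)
Your proof is correct and follows essentially the same approach as the paper's: establish the adaptive \v{C}ech--Rips inclusions $C_{\hat X,\widehat{\lfs}}(\kappa)\subseteq R_{\hat X,\widehat{\lfs}}(\kappa)\subseteq C_{\hat X,\widehat{\lfs}}(2\kappa)$ and thread them into a six-term chain in the style of \Cref{thm:hom_inference}. The paper's own proof is a two-line sketch that simply states those inclusions and instructs the reader to ``produce a 6-term inclusion chain similarly''; you have carried out precisely that instruction, supplying the justification for the adaptive inclusions, the explicit chain, the verification that all the $\lfs^\pi$-thresholds land in the Chazal--Lieutier window $[a,4(M_KM_{\hat K})^2a]$, and the rank sandwich---none of which the paper writes down.
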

\begin{proof}
For any $\kappa$ we have the following inclusions. Use them to produce a 6-term inclusion chain similarly to the proof of \Cref{thm:hom_inference}:
\[C_{\hat{X},\widehat{\lfs}}(\kappa) \subseteq R_{\hat{X},\widehat{\lfs}}(\kappa) \subseteq C_{\hat{X},\widehat{\lfs}}(2\kappa)\text{.}
\]
\end{proof}

\end{document}